
\documentclass[11pt]{article}

\usepackage{amsmath, amsthm}
\usepackage{amsmath, amsfonts}
\usepackage{amsmath, amssymb}
\usepackage{amsmath}
\usepackage[all]{xy}

\textheight =  24   cm
\textwidth  =  16   cm
\hoffset    =  -1.5 cm
\topmargin  =  -2   cm   

\parindent      = 5.0mm

\marginparwidth = 2 cm
\marginparsep   = 0.2 cm



\newtheorem{theorem}{Theorem}[subsection]
\newtheorem{definition}[theorem]{Definition}
\newtheorem{definition-lemma}[theorem]{Definition/Lemma}
\newtheorem{definition-explanation}[theorem]{Definition/Explanation}
\newtheorem{explanation-definition}[theorem]{Explanation/Definition}
\newtheorem{definition-fact}[theorem]{Definition/Fact}
\newtheorem{lemma}[theorem]{Lemma}
\newtheorem{lemma-definition}[theorem]{Lemma/Definition}
\newtheorem{proposition}[theorem]{Proposition}

\newtheorem{example}[theorem]{Example}
\newtheorem{example-definition}[theorem]{Example/Definition}

\newtheorem{definition-prototype}[theorem]{Definition-Prototype}

\numberwithin{equation}{subsection}

\newtheorem{stheorem}{Theorem}[section]

\newtheorem{sdefinition-lemma}[stheorem]{Definition/Lemma}
\newtheorem{sdefinition-explanation}[stheorem]{Definition/Explanation}
\newtheorem{sexplanation-definition}[stheorem]{Explanation/Definition}
\newtheorem{sdefinition-fact}[stheorem]{Definition/Fact}

\newtheorem{slemma-definition}[stheorem]{Lemma/Definition}
\newtheorem{sproposition}[stheorem]{Proposition}

\newtheorem{sexample}[stheorem]{Example}
\newtheorem{sexample-definition}[stheorem]{Example/Definition}

\newtheorem{sdefinition-prototype}[stheorem]{Definition-Prototype}
\newtheorem{squestion}[stheorem]{Question}


\newtheorem{ssdefinition-lemma}[sstheorem]{Definition/Lemma}
\newtheorem{ssdefinition-explanation}[sstheorem]{Definition/Explanation}
\newtheorem{ssexplanation-definition}[sstheorem]{Explanation/Definition}

\newtheorem{sslemma-definition}[sstheorem]{Lemma/Definition}

\newtheorem{ssexample-definition}[sstheorem]{Example/Definition}

\newtheorem{ssdefinition-prototype}[sstheorem]{Definition-Prototype}

\newcommand{\categoryAb}{\mbox{\it ${\cal A}$b}}
\newcommand{\Ext}{\mbox{\rm Ext}\,}
\newcommand{\Homsheaf}{\mbox{\it ${\cal H}$om}\,}
\newcommand{\Image}{\mbox{\it Im}\,}
\newcommand{\ModCategory}{\mbox{\it ${\cal M}$\!od}\,}
\newcommand{\boldSch}{\mbox{\bf Sch}\,}
\newcommand{\boldSm}{\mbox{\bf Sm}\,}
\newcommand{\Spec}{\mbox{\it Spec}\,}
\newcommand{\Sym}{\mbox{\it Sym}}

\newcommand{\dimm}{\mbox{\it dim}\,}
\newcommand{\gr}{\mbox{\it gr}\,}
\newcommand{\grade}{\mbox{\it grade}\,}
\newcommand{\llist}{\mbox{\it list}\,}
\newcommand{\pr}{\mbox{\it pr}}
\newcommand{\rank}{\mbox{\it rank}\,}



\begin{document}

\enlargethispage{24cm}

\begin{titlepage}

$ $

\vspace{-1.5cm} 

\noindent\hspace{-1cm}
\parbox{6cm}{\small February 2011}\
   \hspace{8cm}\
   \parbox[t]{5cm}{yymm.nnnn [math.AG]}

\vspace{2cm}

\centerline{\large\bf
 Algebraic cobordism of filtered vector bundles on varieties:}
\vspace{1ex}
\centerline{\large\bf
 Notes on a work of Lee and Pandharipande}

\bigskip

\vspace{3em}

\centerline{\large
  Chien-Hao Liu, \hspace{1ex}
  Yu-jong Tzeng, \hspace{1ex} and \hspace{1ex}
  Shing-Tung Yau
}

\vspace{6em}

\begin{quotation}
\centerline{\bf Abstract}

\vspace{0.3cm}

\baselineskip 12pt  
{\small
 The construction of
   double point cobordism groups of vector bundles on varieties
   in the work [Lee-P] (arXiv:1002.1500 [math.AG])
   of Yuan-Pin Lee and Rahul Pandharipande
  gives immediately double point cobordism groups
   of filtered vector bundles on varieties.
 We note also that
  among
  the four basic operations
   -- direct sum, tensor product, dual, and {\it ${\cal H}$om} --
   on vector bundles on varieties,
  only taking dual is compatible with double point cobordisms
   of vector bundles on varieties in general,
  by a demonstration
  on an example of vector bundles on Calabi-Yau $3$-folds.
 A question on
  refined and/or higher algebraic cobordisms of vector bundles on varieties
  is posed in the end.
} 
\end{quotation}

\vspace{20em}

\baselineskip 12pt
{\footnotesize
\noindent
{\bf Key words:} \parbox[t]{14cm}{algebraic cobordism,
    double point relation, filtered vector bundle.
 }} 

\bigskip

\noindent {\small MSC number 2010:
 14F43, 14J60; 57R90, 14J32, 81T30.
} 

\bigskip

\baselineskip 10pt
{\scriptsize
\noindent{\bf Acknowledgements.}
We thank
 Si Li and Baosen Wu
  for discussions.
C.-H.L.\ thanks in addition
 Y.-j.\ and Jacob Lurie
 for related topic courses
 and Ling-Miao Chou for moral support.
} 

\end{titlepage}

\newpage
$ $

\vspace{-3em}

\centerline{\sc Algebraic Cobordism of Filtered Vector Bundles on Varieties}

\vspace{2em}


\begin{flushleft}
{\Large\bf 0. Introduction and outline.}
\end{flushleft}
In this note, we observe that
 the construction of double point cobordism groups of vector bundles
 on varieties in the work [Lee-P] (arXiv:1002.1500 [math.AG])
 of Yuan-Pin Lee and Rahul Pandharipande
 gives immediately double point cobordism groups
 of filtered vector bundles on varieties.
We note also that
 among
 the four basic operations
  -- direct sum, tensor product, dual, and {\it ${\cal H}$om} --
  on vector bundles on varieties,
 only taking dual is compatible with double point cobordisms
  of vector bundles on varieties in general,
 by a demonstration on an example of vector bundles on Calabi-Yau $3$-folds.
A question on
 refined and/or higher algebraic cobordisms of vector bundles on varieties
 is posed in the end.

\bigskip

\noindent
{\bf Convention.}
 Standard notations, terminology, operations, facts in
  (1) algebraic geometry; (2) algebraic cobordism of varieties;
  (3) algebraic cobordism of vector bundles on varieties
  can be found respectively in
  $\;$(1) [Fu: Appendix B]; (2) [L-M], [Lev-P]; (3) [Lee-P].
 \begin{itemize}
  \item[$\cdot$]
   All schemes are over ${\Bbb C}$,
    which can be replaced by a field of characteristic $0$ or
    any field that satisfies the resolution-of-singularities property
    of Hironaka;
   [Hi] and [L-M: Appendix].

  \item[$\cdot$]
   The convention for the notion of
    the {\it vector bundle} $E$ associated
     to a {\it locally free sheaf} ${\cal E}$
    and its {\it projectivization} ${\Bbb P}(E)$ follows Fulton [Fu]
    (instead of Grothendieck in Hartshorne's text book).
   We will write ${\cal E}$ and $E$ interchangeably in this note
    to save notations.

  \item[$\cdot$]
   For a product ${\Bbb P}^{n_1}\times\,\cdots\,\times{\Bbb P}^{n_s}$
    of projective spaces (over ${\Bbb C}$),
   we'll denote the line bundle
    $\pi_1^{\ast}({\cal O}_{{\Bbb P}^{n_1}}(d_1))\otimes\,
      \cdots\,\otimes\pi_s^{\ast}({\cal O}_{{\Bbb P}^{n_s}}(d_s))$
     thereon,
     where
      $\pi_i:{\Bbb P}^{n_1}\times\,\cdots\,\times{\Bbb P}^{n_s}
        \rightarrow {\Bbb P}^{n_i}$ is the projection map,
    by ${\cal O}_{{\Bbb P}^{n_1}\times\,\cdots\,\times{\Bbb P}^{n_s}}
         (d_1,\,\cdots\,,d_s)$
       or simply ${\cal O}_{}(d_1,\,\cdots\,,d_s)$.

  \item[$\cdot$]
   For a pair $(f:Y\rightarrow X,L)$,
    where $Y$ (resp.\ $X$; $f:Y\rightarrow X$; $L$)
     is a smooth quasi-projective variety
    (resp.\ a (separated) scheme (of finite type) over ${\Bbb C}$;
            a projective morphism;
           a line bundle on $Y$),
   $[f:Y\rightarrow X, L]$ represents,
    on one hand, a class in $\Omega_{\ast}(X)$ in [L-M] and,
    on the other hand, a class in $\omega_{\ast,1}(X)$ in [Lee-P].
   In Sec.~2 of the current note, we need such a presentation
    of a class in $\Omega_{\ast}(\bullet)$
    while studying $\omega_{\ast,1}(\bullet)$.
   {To} avoid confusion, we denote a class
     $[f:Y\rightarrow X, L_1,\,\cdots\,,L_s]\in \Omega_{\ast}(X)$ also by
     $[f:Y\rightarrow X; L_1,\,\cdots\,,L_s]$ whenever needed for clarity.

  \item[$\cdot$]
   This note is meant to be a footnote to [Lee-P];
   in particular, we assume readers are familiar
    with both the notations and the contents of ibidem.
 \end{itemize}

\bigskip

\bigskip

\begin{flushleft}
{\bf Outline.}
\end{flushleft}
{\small
\baselineskip 12pt  
\begin{itemize}
 \item[0.]
  Introduction.

 \item[1.]
  Algebraic cobordism of filtered vector bundles on varieties.
  \vspace{-.6ex}
  \begin{itemize}
   \item[1.1]
    Double point cobordism of lists of lists of line bundles
    on varieties.

   \item[1.2]
    Double point cobordism of filtered vector bundles on varieties.
  \end{itemize}

 \item[2.]
  Algebraic cobordism of and basic operations on vector bundles
   on varieties.
  \vspace{-.6ex}
  \begin{itemize}
   \item[$\cdot$]
    (Non)compatibility between basic operations and algebraic cobordisms.

   \item[$\cdot$]
    Question:
    Refined and higher algebraic cobordisms of vector bundles on varieties?
  \end{itemize}

\end{itemize}
} 

\newpage

\section{Algebraic cobordism of filtered vector bundles
         on varieties.}

We explain in this section that
 the construction of Lee and Pandharipande
 of algebraic cobordism of vector bundles on varieties
 in [Lee-P]
gives also
 algebraic codordism of filtered vector bundles on varieties.

\bigskip

\subsection{Double point cobordism of lists of lists of line bundles
            on varieties.}

We recall [Lee-P: Sec.~2] in this subsection in a format we need.
This is the foundation of the current note.

\begin{definition}
{\bf [double point degeneration over $0\in {\Bbb P}^1$].}
{\rm ([Lev-P: Sec.~0.3].)} {\rm
  Let $Y\in\boldSm_{\Bbb C}$ be (smooth, quasi-projective)
   of pure dimension.
  A morphism $\pi:Y\rightarrow{\Bbb P}^1$ is
   a {\it double point degeneration} over $0\in {\Bbb P}^1$
  if $\pi^{-1}(0)=A\cup B$
   with $A$ and $B$ smooth codimension-$1$ closed subscheme of $Y$
    that intersect transversely.
  $D:=A\cap B$ is the {\it double point locus} of $\pi$
   over $0\in{\Bbb P}^1$.
 (Here, $A$, $B$, and $D$ can be either empty
   or have several connected components.)
  Let $N_{D/A}$ and $N_{D/B}$ be the normal bundles of $D$
   in $A$ and $B$ respectively.
  Note that $N_{D/A}\otimes N_{D/B}\simeq O_D$ and
  the two projective bundles
   ${\Bbb P}(O_D\oplus N_{D/A})\rightarrow D$
   and ${\Bbb P}(O_D\oplus N_{D/B})\rightarrow D$ are isomorphic.
  Either one will be denoted ${\Bbb P}(\pi)\rightarrow D$.
}\end{definition}

\begin{definition} {\bf [partition list].} {\rm
 A {\it partition list}
  of {\it size} $n$ and {\it type} $r_1<r_2<\,\cdots\,<r_s$
  is a tuple
  $$
   (\lambda,\vec{m}^{\mbox{\boldmath $\cdot$}})\; :=\;
   (\lambda,
    (m_1,\,\cdots\,,\,m_{r_1}; m_{r_1+1},\,\cdots\,,\,m_{r_2};
     \,\cdots\,; m_{r_{s-1}+1},\,\cdots\,,\,m_{r_s}))
  $$
  where
   \begin{itemize}
    \item[$\cdot$]
     $\lambda$ is a partition of $n$,
     (whose length will be denoted by $l(\lambda)$),

    \item[$\cdot$]
     the ungrouped tuple
     $\vec{m}
      :=(m_1,\,\cdots\,,\,m_{r_1}, m_{r_1+1},\,\cdots\,,\,m_{r_2},
          \,\cdots\,, m_{r_{s-1}+1},\,\cdots\,,\,m_{r_s}))$
     associated to $\vec{m}^{\mbox{\boldmath $\cdot$}}$
     is a list with $m_i\ge 0$ whose union of {\it nonzero} parts
     is a sub-partition $\mu\subset\lambda$.
   \end{itemize}
 Denote by ${\cal P}_{n,1^{r_1<r_2<\,\cdots\,<r_s}}$ the set of
  all partition lists of size $n$ and type $r_1<r_2<\,\cdots\,<r_s$.
 Note that there is a canonical isomorphism of sets
  ${\cal P}_{n,1^{r_1<r_2<\,\cdots\,<r_s}}
    \stackrel{\sim}{\rightarrow} {\cal P}_{n,1^{r_s}}$
  by forgetting the grouping.
}\end{definition}

\begin{definition} {\bf [partition].} {\rm
 A {\it partition}
   of {\it size} $n$ and {\it type} $r_1<r_2<\,\cdots\,<r_s$
  is a tuple
  $$
   (\lambda,\vec{\mu}) :=\;
   (\lambda, (\mu_1,\,\mu_2,\,\cdots\,,\mu_s))
  $$
  where
   \begin{itemize}
    \item[$\cdot$]
     $\lambda$ is a partition of $n$,
     (whose length will be denoted by $l(\lambda)$),

    \item[$\cdot$]
     the union of $\mu_1,\,\cdots\,,\mu_s$ is a sub-partition
      $\mu\subset\lambda$
     with length $l(\mu_i)\le r_i-r_{i-1}\,$, for $i=1,\,\ldots\,,s\,$.
   \end{itemize}
 Denote by ${\cal P}_{n,r_1<r_2<\,\cdots\,<r_s}$ the set of
  all partitions of size $n$ and type $r_1<r_2<\,\cdots\,<r_s$.
}\end{definition}

\begin{example} {\bf [partition list and partition].}
{\rm
 The sets ${\cal P}_{4,1^{2<3}}$ and  ${\cal P}_{4,2<3}$
  are shown below:
 (Elements that are bold-faced are those contained in
  ${\cal P}^{\natural}_{\ast,1^{2<3}}$ and
  ${\cal P}^{\natural}_{\ast,2<3}$ respectively;
  cf.\ Theorem~1.1.10 and Theorem~1.2.4.)

 {\scriptsize
 $$
 \begin{array}{lcl}
  \mbox{\normalsize ${\cal P}_{4,1^{2<3}}$}  & =
   & \left\{
      \begin{array}{l}
       (4,\,(0,0;0))\,,\;
       \mbox{\boldmath $(4,\,(4,0;0))$}\,,\;
       \mbox{\boldmath $(4,\,(0,4;0))$}\,,\;
       \mbox{\boldmath $(4,\,(0,0;4))$}\,,         \\[.8ex]
       (31,\,(0,0;0))\,,\;                         \\[.8ex]
       (31,\,(3,0;0))\,,\;   (31,\,(0,3;0))\,,\;   (31,\,(0,0;3))\,, 
       (31,\,(1,0;0))\,,\;   (31,\,(0,1;0))\,,\;   (31,\,(0,0;1))\,,\;
                                                   \\[.8ex]
       \mbox{\boldmath $(31,\,(3,1;0))$}\,,\;
       \mbox{\boldmath $(31,\,(1,3;0))$}\,,\;
       \mbox{\boldmath $(31,\,(3,0;1))$}\,,\;
       \mbox{\boldmath $(31,\,(1,0;3))$}\,,\;
       \mbox{\boldmath $(31,\,(0,3;1))$}\,,\;
       \mbox{\boldmath $(31,\,(0,1;3))$}\,,     \\[.8ex]
       (22,\,(0,0;0))\,,                           \\[.8ex]
       (22,\,(2,0;0))\,,\;   (22,\,(0,2;0))\,,\;   (22,\,(0,0;2))\,,\;
       \mbox{\boldmath $(22,\,(2,2;0))$}\,,\;
       \mbox{\boldmath $(22,\,(2,0;2))$}\,,\;
       \mbox{\boldmath $(22,\,(0,2;2))$}\,,        \\[.8ex]
       (211,\,(0,0;0))\,,\;                        \\[.8ex]
       (211,\,(2,0;0))\,,\;  (211,\,(0,2;0))\,,\;  (211,\,(0,0;2))\,,\;
       (211,\,(1,0;0))\,,\;  (211,\,(0,1;0))\,,\;  (211,\,(0,0;1))\,,
                                                   \\[.8ex]
       (211,\,(2,1;0))\,,\;  (211,\,(1,2;0))\,,\;
       (211,\,(2,0;1))\,,\;  (211,\,(1,0;2))\,,\;
       (211,\,(0,2;1))\,,\;  (211,\,(0,1;2))\,,\;  \\[.8ex]
       (211,\,(1,1;0))\,,\,  (211,\,(1,0;1))\,,\;  (211,\,(0,1;1))\,,\;
       \mbox{\boldmath $(211,\,(2,1;1))$}\,,\;
       \mbox{\boldmath $(211,\,(1,2;1))$}\,,\;
       \mbox{\boldmath $(211,\,(1,1;2))$}\,,       \\[.8ex]
       (1111,\,(0,0;0))\,,\;
       (1111,\,(1,0;0))\,,\; (1111,\,(0,1;0))\,,\; (1111,\,(0,0;1))\,,
                                                   \\[.8ex]
       (1111,\,(1,1;0))\,,\; (1111,\,(1,0;1))\,,\; (1111,\,(0,1;1))\,,\;
       (1111,\,(1,1;1))
      \end{array}
     \right\}\;; \\[24ex]
  \mbox{\normalsize ${\cal P}_{4,2<3}$}  & =
   & \left\{
      \begin{array}{l}
       (4,\,(\emptyset,\emptyset))\,,\;
       \mbox{\boldmath $(4,\,(4,\emptyset))$}\,,\;
       \mbox{\boldmath $(4,\,(\emptyset,4))$}\,,\; \\[.8ex]
       (31,\,(\emptyset,\emptyset))\,,\;
       (31,\,(3,\emptyset))\,,\;  (31,\,(\emptyset,3))\,,\;
       (31,\,(1,\emptyset))\,,\;  (31,\,(\emptyset,1))\,,  \\[.8ex]
       \mbox{\boldmath $(31,\,(31,\emptyset))$}\,,\;
       \mbox{\boldmath $(31,\,(3,1))$}\,,\;
       \mbox{\boldmath $(31,\,(1,3))$}\,,                  \\[.8ex]
       (22,\,(\emptyset,\emptyset))\,,\;
       (22,\,(2,\emptyset))\,,\;  (22,\,(\emptyset,2))\,,\;
       \mbox{\boldmath $(22,\,(22,\emptyset))$}\,,\;
       \mbox{\boldmath $(22,\,(2,2))$}\,,                  \\[.8ex]
       (211,\,(\emptyset,\emptyset))\,,\;
       (211,\,(2,\emptyset))\,,\; (211,\,(\emptyset,2))\,,\;
       (211,\,(1,\emptyset))\,,\; (211,\,(\emptyset,1))\,, \\[.8ex]
       (211,\,(21,\,\emptyset))\,,\;
       (211,\,(2,1))\,,\;  (211,\,(1,2))\,,\;
       (211,\,(11,\,\emptyset))\,,\; (211,\,(1,1))\,,\;    \\[.8ex]
       \mbox{\boldmath $(211,\,(21,1))$}\,,\;
       \mbox{\boldmath $(211,\,(11,2))$}\,,                \\[.8ex]
       (1111,\,(\emptyset,\emptyset))\,,\; 
       (1111,\,(1,\emptyset))\,,\; (1111,\,(\emptyset,1))\,,\;
       (1111,\,(11,\emptyset))\,,\; (1111,\,(1,1))\,,\;
       (1111,\,(11,1))
      \end{array}
     \right\}\,.
 \end{array}
 $$
 }
}\end{example}

\bigskip

\begin{flushleft}
{\bf Double point cobordism of lists of lists of line bundles
     on varieties.}
\end{flushleft}

\begin{definition}
{\bf [list of lists of line bundles on smooth varieties over $X$].}
{\rm
 For $X\in\boldSch_{\Bbb C}$ and
  $0<r_1<r_2<\,\cdots\,<r_s$ a sequence of positive integers,
 let
  ${\cal M}_{n,1^{r_1<r_2<\,\cdots\,<r_s}}(X)$
  be the set of isomorphism classes of tuples
  $$
   (f:Y\rightarrow X,
    (L_1,\,\cdots\,,L_{r_1}; L_{r_1+1},\,\cdots\,,L_{r_2};
     \,\cdots\,; L_{r_{s-1}+1},\,\cdots\,,L_{r_s}))
  $$
  with
   $Y\in \boldSm_{\Bbb C}$ of dimension $n$,
   $f$ projective, and
   $$
    (L_1,\,\cdots\,,L_{r_1}; L_{r_1+1},\,\cdots\,,L_{r_2};
     \,\cdots\,; L_{r_{s-1}+1},\,\cdots\,,L_{r_s})
   $$
    an (ordered) list of line bundles on $Y$
    with a {\it grouping} specified by $0<r_1<r_2<\,\cdots\,<r_s$,
     as indicated by $\,;\,$'s
      that divide the list and
       turn it into an (ordered) {\it list of} (ordered) {\it lists}.
 Here,
  an {\it isomorphism}
   from
    $(f:Y\rightarrow X,
     (L_1,\,\cdots\,,L_{r_1}; L_{r_1+1},\,\cdots\,,L_{r_2};
      \,\cdots\,; L_{r_{s-1}+1},\,\cdots\,,L_{r_s}))$
   to
    $(f^{\prime}:Y^{\prime}\rightarrow X,
      (L^{\prime}_1,\,\cdots\,,L^{\prime}_{r_1};
       L^{\prime}_{r_1+1},\,\cdots\,,L^{\prime}_{r_2}; \,\cdots\,;
       L^{\prime}_{r_{s-1}+1},\,\cdots\,,L^{\prime}_{r_s}))$
  is a tuple
   $$
    (\phi,
     (\psi_1,\,\cdots\,,\psi_{r_1}; \psi_{r_1+1},\,\cdots\,,\psi_{r_2};
       \,\cdots\,; \psi_{r_{s-1}+1},\,\cdots\,,\psi_{r_s}))
   $$
  where
    $\phi:Y\rightarrow Y^{\prime}$
     is an isomorphism of $X$-schemes and
    $\psi_i: L_i \stackrel{\sim}{\rightarrow}
                                 \phi^{\ast}L^{\prime}_i$
     are isomorphisms of line bundles on $Y$, for $i=1,\,\ldots\,,r_s$.
 The {\it isomorphism class} of
  $$
   (f:Y\rightarrow X,
    (L_1,\,\cdots\,,L_{r_1}; L_{r_1+1},\,\cdots\,,L_{r_2};
     \,\cdots\,; L_{r_{s-1}+1},\,\cdots\,,L_{r_s}))
  $$
  in ${\cal M}_{n,1^{r_1<\,\cdots\,<r_s}}(X)$
  is denoted by
  $$
   [f:Y\rightarrow X,
    (L_1,\,\cdots\,,L_{r_1}; L_{r_1+1},\,\cdots\,,L_{r_2};
     \,\cdots\,; L_{r_{s-1}+1},\,\cdots\,,L_{r_s})]
  $$
  or $[f:Y\rightarrow X,(L_1,\,\cdots\,,L_{r_s})]$ in short,
   with the specified grouping assumed.

 The set ${\cal M}_{n,1^{r_1<\,\cdots\,<r_s}}(X)$
  is a {\it monoid} under the disjoint union of domains.
 Denote by ${\cal M}_{n,1^{r_1<\,\cdots\,<r_s}}(X)^+$
  the {\it group completion} of ${\cal M}_{n,1^{r_1<\,\cdots\,<r_s}}(X)$.
}\end{definition}

\begin{definition}
{\bf [double point relation over $X$].}
{\rm ([Lee-P: Sec.~2.1, Definition~4];
      also [Lev-P: Sec.~14.4] and [Tz1: Sec.~2.2].)} {\rm
 Let
  \begin{itemize}
   \item[$\cdot$]
    $Y\in\boldSm_{\Bbb C}$ be of pure dimension $n+1$,

   \item[$\cdot$]
    $g:Y\rightarrow X\times{\Bbb P}^1$
     be a projective morphism
      for which the composition
      $\pi:= \pr_2\circ g:Y\rightarrow {\Bbb P}^1$
      is a double point degeneration over $0\in {\Bbb P}^1$,

   \item[$\cdot$]
    $(L_1,\,\cdots\,,L_{r_1};\,\cdots\,; L_{r_{s-1}+1},\,\cdots\,,L_{r_s})$
     be a list of lists of line bundles
     with grouping specified by $0<r_1<\,\cdots\,<r_s$,

   \item[$\cdot$]
    $[A\rightarrow X,(L_{1,A},\,\cdots\,,L_{r_s,A})]$,
    $[B\rightarrow X,(L_{1,B},\,\cdots\,,L_{r_s,B})]$,
    $[{\Bbb P}(\pi)\rightarrow X,
      (L_{1,{\Bbb P}(\pi)},\,\cdots\,,L_{r_s,{\Bbb P}(\pi)})]$
     $\in {\cal M}_{n,1^{r_1<\,\cdots\,<r_s}}(X)^+$
     be obtained from the fiber $\pi^{-1}(0)$ and the morphism
     $\pr_1\circ g$.
  \end{itemize}
 Let $\zeta\in {\Bbb P}^1({\Bbb C})$ be a regular value of $\pi$.
 The associated {\it double point relation} over $X$
  is defined to be the element
  $$
   [Y_{\zeta},(L_{i,Y_{\zeta}})_i]\,
    -\, [A\rightarrow X,(L_{i,A})_i]\,
    -\, [B\rightarrow X,(L_{i,B})_i]\,
    +\, [{\Bbb P}(\pi)\rightarrow X,(L_{i,{\Bbb P}(\pi)})_i]
  $$
  in ${\cal M}_{n,1^{r_1<\,\cdots\,<r_s}}(X)^+$,
  where $Y_{\zeta}:= \pi^{-1}(\zeta)$.
}\end{definition}

\begin{definition}
{\bf [double point cobordism group
      $\omega_{n,1^{r_1<\,\cdots\,<r_s}}(X)$].}
{\rm ([Lee-P: Sec.~2.1].)}
{\rm
 Continuing the discussion, let
  $$
   {\cal R}_{n,1^{r_1<\,\cdots\,<r_s}}(X)\,
        \subset\, {\cal M}_{n,1^{r_1<\,\cdots\,<r_s}}(X)^+
  $$
  be the subgroup generated by all double point relations over $X$.
 The {\it double point cobordism group} of $X$
  for lists of lists of line bundles of grouping type
    $r_1<\,\cdots\,<r_s$
  is defined to be the quotient group
  $$
   \omega_{n,1^{r_1<\,\cdots\,<r_s}}(X)\;
   :=\; {\cal M}_{n,1^{r_1<\,\cdots\,<r_s}}(X)^+/
         {\cal R}_{n,1^{r_1<\,\cdots\,<r_s}}(X)\,.
  $$
 Note that the sum
  $$
   \omega_{\ast,1^{r_1<\,\cdots\,<r_s}}(X)\;
   :=\; \bigoplus_{n=0}^{\infty} \omega_{n,1^{r_1<\,\cdots\,<r_s}}(X)
  $$
  is an $\omega_{\ast}({\Bbb C})$-module via product.
}\end{definition}

The following lemma is immediate:

\begin{lemma} {\bf [no influence of decoration
               to group/$\omega_{\ast}({\Bbb C})$-module structure].}
 The natural forgetful map
  $$
   {\cal M}_{n,1^{r_1<\,\cdots\,<r_s}}(X)\;
   \longrightarrow\; {\cal M}_{n,1^{r_s}}(X)
  $$
  that sends
  $$
   (f:Y\rightarrow X,
    (L_1,\,\cdots\,,L_{r_1}; L_{r_1+1},\,\cdots\,,L_{r_2};
     \,\cdots\,; L_{r_{s-1}+1},\,\cdots\,,L_{r_s}))
  $$
  to
  $$
   (f:Y\rightarrow X,
    (L_1,\,\cdots\,,L_{r_1}, L_{r_1+1},\,\cdots\,,L_{r_2},
     \,\cdots\,, L_{r_{s-1}+1},\,\cdots\,,L_{r_s}))
  $$
  induces a canonical group isomorphism
  $$
   \omega_{n,1^{r_1<\,\cdots\,<r_s}}(X)\;
    \stackrel{\sim}{\longrightarrow}\;
     \omega_{n,1^{r_s}}(X)\,,
  $$
  which is also an $\omega_{\ast}({\Bbb C})$-module isomorphism.
\end{lemma}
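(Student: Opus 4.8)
The plan is to exploit that the grouping $0<r_1<\cdots<r_s$ is fixed once and for all, so that ``forgetting the semicolons'' and ``reinserting the semicolons at the prescribed places $r_1<\cdots<r_s$'' are mutually inverse operations on the underlying data. Concretely, one checks that every ingredient entering the construction of $\omega_{n,1^{r_1<\cdots<r_s}}(X)$ -- the notion of isomorphism of decorated families, the monoid law by disjoint union of domains, the four-term double point relations over $X$, and the external product defining the $\omega_\ast({\Bbb C})$-action -- depends only on the ungrouped data, and then invokes naturality of group completion and of passage to quotients.

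First I would check that the forgetful map is already a bijection at the level of monoids. An element of ${\cal M}_{n,1^{r_1<\cdots<r_s}}(X)$ is the isomorphism class of a projective $f:Y\rightarrow X$ with $Y\in\boldSm_{\Bbb C}$ of dimension $n$, together with an ordered $r_s$-tuple $(L_1,\ldots,L_{r_s})$ of line bundles on $Y$ endowed with the fixed grouping; an isomorphism is a tuple $(\phi,\psi_1,\ldots,\psi_{r_s})$ with $\phi$ an isomorphism of $X$-schemes and $\psi_i:L_i\stackrel{\sim}{\rightarrow}\phi^{\ast}L^{\prime}_i$. Neither the objects-up-to-isomorphism nor the isomorphisms ever refer to where the semicolons sit, so $(f,(L_1,\ldots;\ldots,L_{r_s}))\mapsto(f,(L_1,\ldots,L_{r_s}))$ is an isomorphism of monoids onto ${\cal M}_{n,1^{r_s}}(X)$, with inverse given by reinstating the grouping. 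Applying the group-completion functor yields a group isomorphism ${\cal M}_{n,1^{r_1<\cdots<r_s}}(X)^+\stackrel{\sim}{\rightarrow}{\cal M}_{n,1^{r_s}}(X)^+$, and this isomorphism intertwines the external-product operation (direct product of domains together with pullback of the decorating line bundles), which again does not see the grouping.

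Next I would show this isomorphism carries ${\cal R}_{n,1^{r_1<\cdots<r_s}}(X)$ onto ${\cal R}_{n,1^{r_s}}(X)$. A generating double point relation over $X$ on the source is produced from some $Y\in\boldSm_{\Bbb C}$ of pure dimension $n+1$, a projective $g:Y\rightarrow X\times{\Bbb P}^1$ with $\pi=\pr_2\circ g$ a double point degeneration over $0\in{\Bbb P}^1$, a regular value $\zeta$ of $\pi$, and a list of lists of line bundles; the resulting element is $[Y_\zeta,(L_{i,Y_\zeta})_i]-[A\rightarrow X,(L_{i,A})_i]-[B\rightarrow X,(L_{i,B})_i]+[{\Bbb P}(\pi)\rightarrow X,(L_{i,{\Bbb P}(\pi)})_i]$. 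Applying the forgetful map termwise gives precisely the double point relation over $X$ attached to the same $(Y,g,\zeta)$ and the flattened tuple of line bundles, and conversely every generator of ${\cal R}_{n,1^{r_s}}(X)$ arises in this way once the grouping is reinstated. Hence the forgetful isomorphism of group completions restricts to an isomorphism of the subgroups of relations, and therefore descends to the asserted isomorphism $\omega_{n,1^{r_1<\cdots<r_s}}(X)\stackrel{\sim}{\rightarrow}\omega_{n,1^{r_s}}(X)$; this descended map is $\omega_\ast({\Bbb C})$-linear because on both sides the $\omega_\ast({\Bbb C})$-action is induced by that same external product, which the grouping bijection respects.

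There is no real obstacle in this argument -- it is pure bookkeeping -- and the only point that deserves care is to quote the definitions of ``isomorphism'', ``disjoint union of domains'', ``double point relation over $X$'', and ``external product'' exactly as above, so that each is manifestly a function of the ungrouped data alone; the naturality of $(-)^+$ and of forming quotients then finishes the proof.
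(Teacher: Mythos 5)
Your proof is correct, and it is precisely the reasoning the paper has in mind: the paper offers no argument at all, simply prefacing the lemma with ``The following lemma is immediate,'' and your writeup is the bookkeeping that justifies that assertion (grouping is a decoration invisible to isomorphisms, disjoint unions, double point relations, and the external product, so it passes through group completion and the quotient unchanged).
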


\begin{theorem}
{\bf [$\omega_{n,1^{r_1<\,\cdots\,<r_s}}
                         ({\Bbb C})\otimes_{\Bbb Z}{\Bbb Q}$
      as ${\Bbb Q}$-vector space].}
{\rm ([Lee-P: Sec.~2.2, Theorem 9].)}
 {To} each
  $(\lambda,\vec{m}^{\mbox{\boldmath$\cdot$}})
  \in {\cal P}_{n,1^{r_1<r_2<\,\cdots\,<r_s}}$,
  associate an element
  $$
   \phi(\lambda,\vec{m}^{\mbox{\boldmath $\cdot$}})\;=\;
   [{\Bbb P}^{\lambda}\,,\,
     (L_{m_1},\,\cdots\,,L_{m_{r_1}}\,;\,
      L_{m_{r_1+1}},\,\cdots\,,L_{m_{r_2}}\,;\,\cdots\,;\,
      L_{m_{r_{s-1}+1}},\,\cdots\,,L_{m_{r_s}})]
  $$
  in $\omega_{n,1^{r_1<r_2<\,\cdots\,<r_s}}({\Bbb C})$,
 where
  \begin{itemize}
   \item[$\cdot$]
    ${\Bbb P}^{\,\lambda}
     := {\Bbb P}^{\,\lambda_1}\times\,
        \cdots\,\times{\Bbb P}^{\,\lambda_{l(\lambda)}}\,$
    is a product of projective spaces specified by $\lambda$,

   \item[$\cdot$]
    $(L_{m_1},\,\cdots\,,L_{m_{r_1}}\,;\,
      L_{m_{r_1+1}},\,\cdots\,,L_{m_{r_2}}\,;\,\cdots\,;\,
      L_{m_{r_{s-1}+1}},\,\cdots\,,L_{m_{r_s}}))$
    is a list of lists of line bundles on ${\Bbb P}^{\,\lambda}$
    defined as follows:
    \begin{itemize}
     \item[$\cdot$]
      Let $\vec{m}_+$ be the sub-tuple of $\vec{m}$ that consists
       of all the nonzero (hence positive) entries of $\vec{m}$;
      then $\vec{m}_+$ specifies a sub-partition $\mu=\mu_{\vec{m}}$
       of $\lambda$.
      Let
       $\pr^{\lambda}_{\mu}:
        {\Bbb P}^{\lambda}\rightarrow {\Bbb P}^{\mu}$
       be a projection map to a product of components specified by $\mu$.
      Then,
        up to an automorphism on ${\Bbb P}^{\lambda}$
         that permutes equal-dimensional factors,
       $\pr^{\lambda}_{\mu}$ is uniquely determined by $(\lambda,\mu)$
        and, hence, by $(\lambda,\vec{m})$.

     \item[$\cdot$]
      For $m_i\in \vec{m}_+\subset\vec{m}$,
       let $\hat{\pr}_i$ be the composition of projection maps
        ${\Bbb P}^{\lambda}\rightarrow {\Bbb P}^{\mu}
                                        \rightarrow {\Bbb P}^{m_i}$.
      Then,
       $L_{m_i}:=\hat{\pr}_i^{\ast}({\cal O}_{{\Bbb P}^{m_i}}(1))\,$.

     \item[$\cdot$]
      For $m_i\notin\vec{m}_+$,
      $L_{m_i}:={\cal O}_{{\Bbb P}^{\,\lambda}}\,$.
    \end{itemize}
  \end{itemize}
 Then,
  $$
   \omega_{n,1^{r_1<\,\cdots\,<r_s}}({\Bbb C})\otimes_{\Bbb Z}{\Bbb Q}\;
   =\; \bigoplus_{(\lambda,\vec{m}^{\mbox{\boldmath $\cdot$}})\,
                   \in\, {\cal P}_{n,1^{r_1<r_2<\,\cdots\,<r_s}}}
        {\Bbb Q}\cdot\phi(\lambda,\vec{m}^{\mbox{\boldmath $\cdot$}})\,.
  $$
\end{theorem}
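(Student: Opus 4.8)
The plan is to reduce to the ungrouped case $s=1$ (i.e.\ $r=r_s$), which is [Lee-P: Sec.~2.2, Theorem~9], using Lemma~1.1.9. First, Lemma~1.1.9 provides a canonical group isomorphism $\omega_{n,1^{r_1<\,\cdots\,<r_s}}({\Bbb C})\stackrel{\sim}{\rightarrow}\omega_{n,1^{r_s}}({\Bbb C})$, hence an isomorphism after $\otimes_{\Bbb Z}{\Bbb Q}$. Second, Definition~1.1.2 gives the canonical bijection ${\cal P}_{n,1^{r_1<\,\cdots\,<r_s}}\stackrel{\sim}{\rightarrow}{\cal P}_{n,1^{r_s}}$, $(\lambda,\vec{m}^{\mbox{\boldmath $\cdot$}})\mapsto(\lambda,\vec{m})$, obtained by forgetting the grouping. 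The one point to verify is that $\phi$ intertwines these two maps, i.e.\ that the forgetful isomorphism sends $\phi(\lambda,\vec{m}^{\mbox{\boldmath $\cdot$}})$ to the Lee--Pandharipande generator $\phi(\lambda,\vec{m})=[{\Bbb P}^{\lambda},(L_{m_1},\,\cdots\,,L_{m_{r_s}})]$; this is immediate from the construction, since the variety ${\Bbb P}^{\lambda}$, the sub-partition $\mu=\mu_{\vec{m}}$, the projections $\pr^{\lambda}_{\mu}$ and $\hat{\pr}_i$, and hence the ungrouped tuple of line bundles $(L_{m_1},\,\cdots\,,L_{m_{r_s}})$ depend only on $(\lambda,\vec{m})$, the grouping serving merely to place the semicolons. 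Granting this, the asserted direct-sum decomposition is exactly the statement of [Lee-P: Theorem~9] transported along the two isomorphisms.

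For completeness I recall how the $s=1$ case is proved; it splits into spanning and independence. For independence one uses characteristic numbers: to a monomial $\prod_j c_1(L_{i_j})\cdot P(c(T_Y))$ of total degree $n$ one associates the functional $[f:Y\rightarrow{\Bbb C},(L_1,\,\cdots\,,L_{r_s})]\mapsto\int_Y\prod_j c_1(L_{i_j})\cdot P(c(T_Y))$, and one checks that it descends to $\omega_{n,1^{r_s}}({\Bbb C})$, i.e.\ annihilates every double point relation. This is the usual double point/normal bundle computation for Chern numbers of varieties, and it is unaffected by the decoration since each $L_i$ restricts to the corresponding bundle on $A$ and $B$ and pulls back to the corresponding bundle on ${\Bbb P}(\pi)$. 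Pairing a suitable family of such functionals against $\{\phi(\lambda,\vec{m})\}$, and ordering ${\cal P}_{n,1^{r_s}}$ by the shape of $(\lambda,\mu)$, one obtains a triangular matrix with nonzero diagonal entries, whence the $\phi(\lambda,\vec{m})$ are ${\Bbb Q}$-linearly independent.

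The main obstacle is the spanning half. The input is the identification (Levine--Morel, Levine--Pandharipande; cf.\ [L-M], [Lev-P]) $\omega_{\ast}({\Bbb C})\otimes_{\Bbb Z}{\Bbb Q}={\Bbb Q}[\,[{\Bbb P}^1],[{\Bbb P}^2],\,\cdots\,]$, which already shows that any undecorated $[Y]$ lies in the ${\Bbb Q}$-span of the classes $[{\Bbb P}^{\lambda}]$. One then carries the line bundles along: by induction on $\dim Y$ together with an auxiliary complexity of $(Y,(L_i)_i)$, one writes each $L_i$ as a difference of very ample bundles and applies double point degenerations --- Lefschetz pencils on $Y$, deformation to the normal cone, and the projective-bundle and blow-up relations already used in [Lee-P] --- to replace each $L_i$, modulo double point relations, by either the trivial bundle or the pullback of ${\cal O}(1)$ from a new projective-space factor, all without leaving the ${\Bbb Q}$-span of the $\phi$'s. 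This bookkeeping, performed simultaneously for all $r_s$ line bundles, is the technical core; by the first paragraph it need not be repeated for the grouped version. It is carried out in [Lee-P: Sec.~2.2].
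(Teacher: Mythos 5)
Your proposal matches the paper's approach: the paper states this theorem as a direct citation of [Lee-P: Sec.~2.2, Theorem~9], the reduction to the ungrouped case being supplied by the forgetful isomorphism of Lemma~1.1.8, which you mislabel as Lemma~1.1.9 by a small off-by-one slip (Theorem~1.1.9 is the statement being proved itself). Your recap of the Lee--Pandharipande argument for the $s=1$ case is consistent with the cited source, though the paper itself does not reproduce that proof and relies entirely on the citation plus Lemma~1.1.8.
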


Recall the ideal ${\frak m}\subset\omega_{\ast}({\Bbb C})$
 that is generated by elements of positive dimensions:
  $$
   0\; \longrightarrow\; {\frak m}\; \longrightarrow\;
       \omega_{\ast}({\Bbb C})\; \longrightarrow\;
       {\Bbb Z}\; \longrightarrow\; 0\,.
  $$
Let
 $$
  {\cal P}_{\ast,1^{r_1<r_2<\,\cdots\,<r_s}}\; :=\;
       \bigcup_{n=0}^{\infty}
                {\cal P}_{n,1^{r_1<r_2<\,\cdots\,<r_s}}\,.
 $$
Then, Theorem~1.1.9 says that
 $$
  {\cal B}_{\ast,1^{r_1<r_2<\,\cdots\,<r_s}}\;
   :=\; \left\{
         \phi(\lambda,\vec{m}^{\mbox{\boldmath $\cdot$}})\;:\;
          (\lambda,\vec{m}^{\mbox{\boldmath $\cdot$}})\,\in\,
                        {\cal P}_{\ast,1^{r_1<r_2<\,\cdots\,<r_s}}
        \right\}
 $$
 forms a basis of
 $\omega_{\ast,1^{r_1<\,\cdots\,<r_s}}({\Bbb C})\otimes_{\Bbb Z}{\Bbb Q}$
 as a ${\Bbb Q}$-vector space.
The set of elements in ${\cal B}_{\ast,1^{r_1<r_2<\,\cdots\,<r_s}}$
  that do not lie in the $\omega_{\ast}({\Bbb C})$-submodule
     ${\frak m}\cdot\omega_{\ast,1^{r_1<\,\cdots\,<r_s}}({\Bbb C})\,$
       of $\,\omega_{\ast,1^{r_1<\,\cdots\,<r_s}}({\Bbb C})\,$
 is given by
 $$
  {\cal B}^{\natural}_{\ast,1^{r_1<r_2<\,\cdots\,<r_s}}\;
   :=\; \left\{
         \phi(\lambda,\vec{m}^{\mbox{\boldmath $\cdot$}})\;:\;
          (\lambda,\vec{m}^{\mbox{\boldmath $\cdot$}})\,\in\,
                        {\cal P}_{\ast,1^{r_1<r_2<\,\cdots\,<r_s}}\,,\,
           l(\lambda)\le r_s\,,\, \mu_{\vec{m}}=\lambda
        \right\}
 $$
Let
 $$
  {\cal P}^{\natural}_{\ast,1^{r_1<\,\cdots\,<r_s}}\;:=\;
   \{ (\lambda,\vec{m}^{\mbox{\boldmath $\cdot$}})\,
       \in\, {\cal P}_{\ast,1^{r_1<r_2<\,\cdots\,<r_s}}\;:\;
       l(\lambda)\le r_s\,,\, \mu_{\vec{m}}=\lambda \}
 $$
 from the above characterization.
Then, one has the following two fundamental theorems
  on the structure of $\omega_{\ast,1^{r_1<\,\cdots\,<r_s}}({\Bbb C})$
  and $\omega_{\ast,1^{r_1<\,\cdots\,<r_s}}(X)$.
They follow from the construction of [Lee-P: Sec.~2] and
  an adaptation of [Lee-P: Sec.~4] to the current situation:

\begin{theorem}
{\bf [$\omega_{\ast,1^{r_1<\,\cdots\,<r_s}}({\Bbb C})$
      as $\omega_{\ast}({\Bbb C})$-module].}
{\rm (Cf.\ [Lee-P: Sec.~0.7, Theorem~2; Sec.~0.8, Theorem~3; Sec.~2].)}
 $$
  \omega_{\ast,1^{r_1<\,\cdots\,<r_s}}({\Bbb C})\;=\;
   \oplus_{(\lambda,\vec{m}^{\mbox{\boldmath $\cdot$}})
           \in{\cal P}^{\natural}_{\ast,1^{r_1<\,\cdots\,<r_s}}}
    \omega_{\ast}({\Bbb C})\,\cdot\,
     \phi(\lambda,\vec{m}^{\mbox{\boldmath $\cdot$}})\,.
 $$
\end{theorem}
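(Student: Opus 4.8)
The strategy is to reduce the statement to the already-quoted results of Lee--Pandharipande by exploiting two structural inputs: (i) Theorem~1.1.9, which identifies $\omega_{\ast,1^{r_1<\,\cdots\,<r_s}}({\Bbb C})\otimes_{\Bbb Z}{\Bbb Q}$ with the ${\Bbb Q}$-vector space on the basis ${\cal B}_{\ast,1^{r_1<r_2<\,\cdots\,<r_s}}$ indexed by ${\cal P}_{\ast,1^{r_1<r_2<\,\cdots\,<r_s}}$; and (ii) Lemma~1.1.8, which says the grouping decoration is invisible to the group and $\omega_{\ast}({\Bbb C})$-module structure, so that everything can be transported from the ungrouped case $\omega_{\ast,1^{r_s}}({\Bbb C})$, where the corresponding statement is [Lee-P: Sec.~0.7--0.8, Theorem~2,3]. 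Thus the theorem to be proved is the integral, $\omega_{\ast}({\Bbb C})$-linear refinement of Theorem~1.1.9, and the work is to upgrade the rational vector-space decomposition to a free-module decomposition over $\omega_{\ast}({\Bbb C})$ with the smaller index set ${\cal P}^{\natural}_{\ast,1^{r_1<\,\cdots\,<r_s}}$.

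First I would recall (or re-run, in the grouped setting) the two halves of the Lee--Pandharipande argument from [Lee-P: Sec.~4]. The \emph{spanning} half: given any generator $[f:Y\to X,(L_i)_i]$ with $Y$ smooth projective of dimension $n$ over ${\Bbb C}$, one uses the algebraic-cobordism machinery of Levine--Morel to write $[Y]\in\omega_{\ast}({\Bbb C})$ and, degeneration by degeneration (the double point relation of Definition~1.1.6), to reduce $Y$ to products of projective spaces ${\Bbb P}^{\lambda}$; simultaneously one tracks the line bundles, using that on ${\Bbb P}^{\lambda}$ every line bundle is ${\cal O}(d_1,\dots,d_{l(\lambda)})$ and that the double point relations allow one to split off the individual ${\cal O}(1)$-twists on distinct factors and to kill any factor on which \emph{no} line bundle is supported (absorbing its class into the $\omega_{\ast}({\Bbb C})$-coefficient). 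This is exactly the mechanism that forces the surviving generators to satisfy $l(\lambda)\le r_s$ (at most $r_s$ line bundles, so at most $r_s$ "used" factors) and $\mu_{\vec m}=\lambda$ (no unused factor), i.e.\ to lie in ${\cal P}^{\natural}$; the remaining classes $\phi(\lambda,\vec m^{\,\cdot})$ with $\mu_{\vec m}\subsetneq\lambda$ get rewritten as $[{\Bbb P}^{\lambda\setminus\mu}]\cdot\phi(\mu,\vec m^{\,\cdot})$, which is why ${\cal B}^{\natural}$, not all of ${\cal B}$, is needed as a module basis. The \emph{independence} half: one constructs enough $\omega_{\ast}({\Bbb C})$-valued (or ${\Bbb Q}$-valued, after $\otimes{\Bbb Q}$) invariants — Chern-number / Hilbert-polynomial type functionals on $(Y,(L_i)_i)$ — that vanish on all double point relations and that, evaluated on the proposed basis elements $\phi(\lambda,\vec m^{\,\cdot})$, give a unitriangular (hence invertible over ${\Bbb Z}$ after a suitable ordering, or at least invertible over ${\Bbb Q}$) matrix; combined with Theorem~1.1.9 this pins down the rank and forces freeness.

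Concretely the steps are: (1) invoke Lemma~1.1.8 to replace $\omega_{\ast,1^{r_1<\,\cdots\,<r_s}}$ throughout by $\omega_{\ast,1^{r_s}}$, reducing to the ungrouped statement (and noting the bijection ${\cal P}^{\natural}_{\ast,1^{r_1<\,\cdots\,<r_s}}\xrightarrow{\sim}{\cal P}^{\natural}_{\ast,1^{r_s}}$ given by forgetting the grouping); (2) prove the spanning statement $\omega_{\ast,1^{r_s}}({\Bbb C})=\sum_{{\cal P}^{\natural}}\omega_{\ast}({\Bbb C})\cdot\phi$ by the degeneration argument just sketched, adapting [Lee-P: Sec.~4] verbatim since nothing there used $r_s$ beyond counting line bundles; (3) establish the directness of the sum by the invariant-functional argument, again following [Lee-P: Sec.~4], and cross-check the rank against Theorem~1.1.9 tensored with ${\Bbb Q}$; (4) transport the conclusion back through Lemma~1.1.8. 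I expect the main obstacle to be step~(2) bookkeeping: carefully showing that a single double point degeneration of the \emph{variety} $Y$ can be made to interact correctly with an \emph{arbitrary} tuple of line bundles — i.e.\ that one can choose the degenerations so that the $L_i$ restrict compatibly to $A$, $B$ and ${\Bbb P}(\pi)$, and that the inductive complexity (on $\dim Y$, on Picard rank contributions, on the number of factors not yet of the normal form) strictly decreases. Since [Lee-P] already carries this out and the grouping plays no role in it, the adaptation is routine but notationally heavy; everything else is a direct appeal to the quoted results.
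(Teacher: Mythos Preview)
Your proposal is correct and follows the paper's own approach: the paper does not give a detailed proof but simply states that Theorems~1.1.10 and~1.1.11 ``follow from the construction of [Lee-P: Sec.~2] and an adaptation of [Lee-P: Sec.~4] to the current situation,'' and your plan---reduce via Lemma~1.1.8 to the ungrouped case $\omega_{\ast,1^{r_s}}({\Bbb C})$ (where the statement is literally [Lee-P: Theorem~2]), note the corresponding bijection ${\cal P}^{\natural}_{\ast,1^{r_1<\cdots<r_s}}\xrightarrow{\sim}{\cal P}^{\natural}_{\ast,1^{r_s}}$, and transport back---is exactly that reduction made explicit. Your sketch of the spanning and independence halves of the [Lee-P: Sec.~4] argument is accurate and in fact more detailed than anything the paper itself writes down.
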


\begin{theorem}
{\bf [$\omega_{\ast,1^{r_1<\,\cdots\,<r_s}}(X)$
      via $\omega_{\ast}(X)\otimes_{\omega_{\ast}(\Bbb C)}
           \omega_{\ast,1^{r_1<\,\cdots\,<r_s}}({\Bbb C})$].}
{\rm (Cf.~[Lee-P: Sec.~0.8, Theorem~3].)}
 For $X\in\boldSch_{\Bbb C}$,
 the natural map
  $$
   \gamma_X\;:\;
    \omega_{\ast}(X)\otimes_{\omega_{\ast}(\Bbb C)}
     \omega_{\ast,1^{r_1<\,\cdots\,<r_s}}({\Bbb C})\;
      \longrightarrow\;
     \omega_{\ast,1^{r_1<\,\cdots\,<r_s}}(X)
  $$
  of $\omega_{\ast}({\Bbb C})$-modules defined by
  $$
   \gamma_X
    \left(
       [Y\stackrel{f}{\rightarrow} X] \otimes
       \phi(\lambda,\vec{m}^{\mbox{\boldmath $\cdot$}})
    \right)\;
   =\; [Y\times{\Bbb P}^{\lambda}
         \stackrel{\hspace{1em}f\circ pr_Y}{
         \raisebox{.1ex}{
               \tiny$\vdash$}\!\mbox{---------}\!\!\longrightarrow}
         X \,,\,
        \pr_{{\Bbb P}^{\lambda}}^{\ast}(L_{m_1},\,\cdots\,,L_{m_r})]
  $$
  is an isomorphism of $\omega_{\ast}({\Bbb C})$-modules.
 Here,
  $(\lambda,\vec{m}^{\mbox{\boldmath $\cdot$}})\in
         {\cal P}^{\natural}_{\ast,1^{r_1<\,\cdots\,<r_s}}$ and
  $\pr_Y:Y\times{\Bbb P}^{\lambda}\rightarrow Y$,
  $\pr_{{\Bbb P}^{\lambda}}:
     Y\times{\Bbb P}^{\lambda}\rightarrow{\Bbb P}^{\lambda}$
   are the projection maps\,.
\end{theorem}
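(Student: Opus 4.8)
\smallskip
\noindent\emph{Proof proposal.} The plan is first to reduce at once to the fully split case $s=1$, and then for that case to reproduce the three steps underlying [Lee-P: Sec.~2,~4] --- well-definedness of $\gamma_X$, surjectivity by a double point degeneration argument, and injectivity by first-Chern-class coefficient operations. For the reduction, note that by Lemma~1.1.8 the grouping-forgetting map $\omega_{\ast,1^{r_1<\,\cdots\,<r_s}}(X)\to\omega_{\ast,1^{r_s}}(X)$ is an $\omega_{\ast}({\Bbb C})$-module isomorphism, natural in $X$; tensoring its ${\Bbb C}$-point instance with $\omega_{\ast}(X)$ over $\omega_{\ast}({\Bbb C})$ yields a compatible isomorphism on the domain of $\gamma_X$, and the two intertwine the grouped $\gamma_X$ with the ungrouped one, because $\phi(\lambda,\vec{m}^{\mbox{\boldmath $\cdot$}})$ and $[Y\times{\Bbb P}^{\lambda}\to X,\,{\rm pr}^{\ast}(L_{m_1},\,\cdots\,,L_{m_{r_s}})]$ are the same data before and after erasing the markers, and because erasing the markers restricts to a bijection ${\cal P}^{\natural}_{\ast,1^{r_1<\,\cdots\,<r_s}}\stackrel{\sim}{\to}{\cal P}^{\natural}_{\ast,1^{r_s}}$ (the conditions $l(\lambda)\le r_s$ and $\mu_{\vec m}=\lambda$ see only the ungrouped tuple). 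So it suffices to treat one ungrouped list of $r:=r_s$ line bundles.

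\emph{Well-definedness and surjectivity.} That $\gamma_X$ is a well-defined $\omega_{\ast}({\Bbb C})$-linear map is already part of the construction of [Lee-P: Sec.~2]: the displayed formula descends from the evident $\omega_{\ast}({\Bbb C})$-bilinear pairing to the tensor product, and it carries a double point relation of $\omega_{\ast}(X)$ to a double point relation of $\omega_{\ast,1^r}(X)$ once the $r$ decoration bundles are pulled back along ${\rm pr}_{{\Bbb P}^{\lambda}}$. For surjectivity I would take a generator $[f:Y\to X,L_1,\,\cdots\,,L_r]$, write each $L_i$ as a difference of very ample bundles, and for a very ample $M$ on $Y$ use a generic pencil in $|M|$ together with the blow-up of its base locus to build a projective $\widetilde{Y}\to X\times{\Bbb P}^1$ whose associated double point relation trades the decoration $M$ for a factor $({\Bbb P}^1,{\cal O}(1))$ up to terms of lower complexity; iterating --- with resolution of singularities to stay inside $\boldSm_{\Bbb C}$, and a double induction on $\dim Y$ and on a positivity measure of $(L_1,\,\cdots\,,L_r)$ --- expresses every generator as an $\omega_{\ast}({\Bbb C})$-combination of classes $[Y^{\prime}\times{\Bbb P}^{\lambda}\to X,\,{\rm pr}^{\ast}{\cal O}(1)\text{'s}]$, i.e.\ of images of $\gamma_X$. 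This is [Lee-P: Sec.~4] with the rank-$r$ bundle there replaced, via the splitting principle, by a list of $r$ line bundles.

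\emph{Injectivity.} For a multi-index $\alpha=(a_1,\,\cdots\,,a_r)\in{\Bbb Z}_{\ge 0}^{\,r}$ define $\Phi_{\alpha}:\omega_{\ast,1^r}(X)\to\omega_{\ast}(X)$ by $[f:Y\to X,L_1,\,\cdots\,,L_r]\mapsto f_{\ast}\big(\tilde{c}_1(L_1)^{a_1}\cdots\tilde{c}_1(L_r)^{a_r}(1_Y)\big)$, where $\tilde{c}_1$ is the first Chern class operator. This is well-defined, being the $\alpha$-th iterated first Chern class operation, which descends through double point relations --- the decoration bundles restrict and pull back compatibly in a degeneration, and $\omega_{\ast}$ itself satisfies double point relations ([Lev-P]) --- as is already used in [Lee-P]. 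By Theorem~1.1.10 the domain of $\gamma_X$ is the free $\omega_{\ast}(X)$-module on $\{\phi(\lambda,\vec m)\}_{(\lambda,\vec m)\in{\cal P}^{\natural}_{\ast,1^r}}$, and a direct computation --- integrating $\tilde{c}_1({\rm pr}^{\ast}L_{m_1})^{a_1}\cdots\tilde{c}_1({\rm pr}^{\ast}L_{m_r})^{a_r}$ over the factors of ${\Bbb P}^{\lambda}$ and pushing forward by $f$ --- gives, for $[Y\xrightarrow{f}X]\in\omega_{\ast}(X)$ and $(\lambda,\vec m),(\lambda^{\prime},\vec m^{\prime})\in{\cal P}^{\natural}_{\ast,1^r}$,
$$
 \Phi_{\vec m^{\prime}}\big(\gamma_X([Y\xrightarrow{f}X]\otimes\phi(\lambda,\vec m))\big)\;=\;\delta_{(\lambda,\vec m),\,(\lambda^{\prime},\vec m^{\prime})}\cdot[Y\xrightarrow{f}X]\,,
$$
since an integral of hyperplane classes over $\prod_j{\Bbb P}^{\lambda_j}$ is nonzero only in top degree on each factor, which forces $\lambda^{\prime}=\lambda$ and then $\vec m^{\prime}=\vec m$. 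Thus the operators $\{\Phi_{\vec m}\}$ read off the coordinates of an arbitrary element of the domain, so $\gamma_X$ is injective; combined with the previous step, $\gamma_X$ is an isomorphism.

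\emph{The main obstacle} is surjectivity: arranging the double point degenerations so that an arbitrary line-bundle decoration is actually replaced by tautological bundles on products of projective spaces, while every intermediate variety stays smooth and the induction terminates, is precisely where one must port --- not merely quote --- the geometry of [Lee-P: Sec.~4] (and, beneath it, [Lev-P] and resolution of singularities). Once this is available, well-definedness is formal and injectivity reduces to the routine integral computation above.
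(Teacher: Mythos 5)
Your reduction to the ungrouped case via Lemma~1.1.8 is precisely the route the paper takes: the theorem is stated there with no standalone proof beyond the remark that it and Theorem~1.1.10 ``follow from the construction of [Lee-P: Sec.~2] and an adaptation of [Lee-P: Sec.~4] to the current situation,'' and the relevant adaptation is exactly the grouping-forgetting isomorphism of Lemma~1.1.8 intertwining the grouped and ungrouped $\gamma_X$, together with the bijection ${\cal P}^{\natural}_{\ast,1^{r_1<\cdots<r_s}}\stackrel{\sim}{\to}{\cal P}^{\natural}_{\ast,1^{r_s}}$ that you spell out. Where you diverge is that, rather than citing [Lee-P: Theorem~3], you sketch a self-contained proof of it; the surjectivity step is (as you yourself flag) the one that truly requires importing the degeneration geometry of [Lee-P: Sec.~4] and [Lev-P], and your injectivity argument via the iterated-$\tilde{c}_1$ operators $\Phi_{\alpha}$ is sound --- the Kronecker-$\delta$ identity does hold, because any discrepancy $m^{\prime}_i>m_i$ kills the class (either $L_{m_i}$ is trivial or $\tilde{c}_1^{m^{\prime}_i}$ exceeds the $\mathbb{P}^{m_i}$-factor), and $\sum m^{\prime}_i=\sum m_i$ then forces $\vec{m}^{\prime}=\vec{m}$, hence $\lambda^{\prime}=\lambda$. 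So: same decomposition, but the paper stops at the citation while you re-derive its content.
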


Finally, the following theorem gives a numerical aspect of
 double point cobordism classes:

\begin{theorem} {\bf [Chern invariant].}
{\rm ([Lee-P: Sec.~2.3, Sec.~2.6].)}
 Let ${\cal C}_{n,1^{r_1<\,\cdots\,<r_s}}$ be the ${\Bbb Q}$-vector space
  of graded degree $n$-polynomials in the Chern classes:
  $$
   ( c_1(T_Y), \,\cdots\,,\,c_n(T_Y);
     c_1(L_1),\,\cdots\,,c_1(L_{r_1});\,\cdots\,;
     c_1(L_{r_{s-1}+1}),\,\cdots\,,c_1(L_{r_s}) )\,.
  $$
 The pairing
  $$
   \rho\;:\;
    \omega_{n,1^{r_1<\,\cdots\,<r_s}}({\Bbb C})\otimes_{\Bbb Z}{\Bbb Q}\,
     \times\, {\cal C}_{n,1^{r_1<\,\cdots\,<r_s}}\;
    \longrightarrow\; {\Bbb Q}
  $$
  defined by
  $$
   \rho([Y, (L_1,\,\cdots\,,L_{r_s})]\,,\,\Theta)\;
    =\;\int_Y\Theta\left(
         c_1(T_Y), \,\cdots\,,\,c_n(T_Y),\,
         c_1(L_1),\,\cdots\,, c_1(L_{r_s})
                   \right)
  $$
  is nondegenerate.
 In particular, a class
  $$
   [Y, (L_1,\,\cdots\,,L_{r_1};\,\cdots\,;
        L_{r_{s-1}+1},\,\cdots\,,L_{r_s})]
   \in\omega_{n,1^{r_1<\,\cdots\,<r_s}}({\Bbb C})\otimes_{\Bbb Z}{\Bbb Q}
  $$
  is characterized by its Chern invariants.
\end{theorem}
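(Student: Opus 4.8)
The plan is to prove two things: that the displayed formula descends to a well-defined pairing on $\omega_{n,1^{r_1<\,\cdots\,<r_s}}({\Bbb C})\otimes_{\Bbb Z}{\Bbb Q}$, and that the resulting pairing is nondegenerate. For the first point I would use the degeneration geometry of Definition~1.1.1: if $\pi:Y\rightarrow{\Bbb P}^1$ is a double point degeneration over $0$ with $\pi^{-1}(0)=A\cup B$ and $D=A\cap B$, then ${\cal O}_Y(A)\otimes{\cal O}_Y(B)\cong\pi^{\ast}{\cal O}_{{\Bbb P}^1}(1)$, hence $N_{D/A}\otimes N_{D/B}\cong{\cal O}_D$, while $N_{Y_\zeta/Y}$ is trivial for a regular value $\zeta$. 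Feeding these adjunction identities into a comparison of the total Chern classes of $T_{Y_\zeta}$, $T_A$, $T_B$, $T_{{\Bbb P}(\pi)}$, and using that the decorating line bundles on the fibers are restrictions of line bundles defined on all of $Y$ (so that their first Chern classes are mutually compatible), one obtains for every degree-$n$ Chern polynomial $\Theta$ the identity $\int_{Y_\zeta}\Theta-\int_A\Theta-\int_B\Theta+\int_{{\Bbb P}(\pi)}\Theta=0$; this is the Chern-number shadow of the double point relation, and is carried out in [Lev-P] and [Lee-P: Sec.~2.3]. Since $\rho$ is visibly additive under disjoint unions, it then passes to the group completion and to the quotient.

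For the nondegeneracy I would first observe that $\rho$ does not see the grouping --- the classes $c_1(L_j)$ ignore the separators ``$;$'' --- so $\rho$ is compatible with the forgetful isomorphism of Lemma~1.1.8, and one may assume $s=1$, writing $r:=r_s$. By Theorem~1.1.9 the left-hand space has dimension $|{\cal P}_{n,1^r}|$; on the right, the monomials $c_{a_1}(T_Y)\cdots c_{a_k}(T_Y)\,c_1(L_{j_1})\cdots c_1(L_{j_t})$ of total degree $n$ form a basis of ${\cal C}_{n,1^r}$, and an elementary bijection --- send $(\lambda,\vec m)$ to $\prod_j c_1(L_j)^{m_j}$ times the tangent monomial whose exponents are read off the partition $\lambda\setminus\mu_{\vec m}$, and recover $\lambda$ by adjoining to that partition the multiset of line-bundle indices --- shows this basis is likewise indexed by ${\cal P}_{n,1^r}$. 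Thus the pairing matrix is square, and it suffices to find an order making it triangular with nonzero diagonal.

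For that I would invoke the $\omega_{\ast}({\Bbb C})$-module decomposition of Theorem~1.1.10: $\omega_{n,1^r}({\Bbb C})\otimes_{\Bbb Z}{\Bbb Q}=\bigoplus_{(\lambda,\vec m)\in{\cal P}^{\natural}_{\ast,1^r}}\bigl(\omega_{n-|\lambda|}({\Bbb C})\otimes_{\Bbb Z}{\Bbb Q}\bigr)\cdot\phi(\lambda,\vec m)$, the terms with $|\lambda|>n$ vanishing. For $(\lambda,\vec m)\in{\cal P}^{\natural}$ one has $\mu_{\vec m}=\lambda$, so in $\phi(\lambda,\vec m)=[{\Bbb P}^\lambda,(L_{m_i})_i]$ every factor of ${\Bbb P}^\lambda=\prod_j{\Bbb P}^{\lambda_j}$ carries a hyperplane class equal to $c_1$ of one of the decorating bundles, the other $c_1(L_i)$ being zero. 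Writing a general class as $\sum[V]\cdot\phi(\lambda,\vec m)$ and the decorating bundles on $V\times{\Bbb P}^\lambda$ as $\pr^{\ast}(L_{m_i})$, I would pair against the test classes $\bigl(\prod_i c_1(L_i)^{m_i}\bigr)\cdot\Theta\bigl(c(T_Y)\bigr)$ with $\Theta$ of degree $n-|\lambda|$. A K\"unneth computation then gives: (i) such a test class pairs to zero with $[V']\cdot\phi(\lambda',\vec m')$ whenever $|\lambda'|<|\lambda|$, because $\prod_i c_1(L_i)^{m_i}$ has degree $|\lambda|>\dim{\Bbb P}^{\lambda'}$ there; (ii) when $|\lambda'|=|\lambda|$ it pairs nontrivially only if $\vec m'=\vec m$, hence $(\lambda',\vec m')=(\lambda,\vec m)$ since both lie in ${\cal P}^{\natural}$, and then the remaining pairing is exactly $[V]\mapsto\int_V\Theta(c(T_V))$. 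Ordering $(\lambda,\vec m)$ by $|\lambda|$ decreasing therefore makes the matrix block-triangular, each diagonal block being the classical Chern-number pairing on $\omega_{n-|\lambda|}({\Bbb C})\otimes_{\Bbb Z}{\Bbb Q}$, which is nondegenerate by the structure theory of algebraic cobordism of varieties [L-M]. Hence the full matrix is invertible, $\rho$ is nondegenerate on both sides, and the last assertion follows at once.

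The step I expect to be the main obstacle is the K\"unneth bookkeeping behind (i) and (ii): determining exactly which decorating-bundle monomials survive restriction to ${\Bbb P}^{\lambda'}$, checking that being of top degree on ${\Bbb P}^{\lambda'}$ forces $\vec m'=\vec m$, and reducing each diagonal block to the classical pairing --- all while tracking the automorphisms of ${\Bbb P}^\lambda$ permuting equal-dimensional factors (equivalently, repeated values in $\vec m$), which is where the combinatorics is heaviest. This is in substance the computation of [Lee-P: Sec.~2.3, Sec.~2.6]; the one genuinely new ingredient is the reduction to $s=1$ via Lemma~1.1.8.
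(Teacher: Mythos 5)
The paper gives no proof of this theorem: it is stated with a citation to [Lee-P: Sec.~2.3, Sec.~2.6] and is part of the recalled background of Sec.~1.1. Your reconstruction is mathematically sound and matches the standard argument in [Lee-P]: (a) the Chern-number identity
$\int_{Y_\zeta}\Theta-\int_A\Theta-\int_B\Theta+\int_{{\Bbb P}(\pi)}\Theta=0$
shows $\rho$ descends; (b) the forgetful isomorphism of Lemma~1.1.8 lets you drop the grouping; (c) Theorem~1.1.9 and the monomial bijection give equal dimensions; (d) the test monomials $\prod_i c_1(L_i)^{m_i}\cdot\Theta(c(T))$ against the free-module basis of Theorem~1.1.10 yield a block-triangular matrix ordered by $|\lambda|$, and each diagonal block is the classical Chern-number pairing on $\omega_{n-|\lambda|}({\Bbb C})\otimes{\Bbb Q}$. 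One small thing worth making explicit in step (ii): the K\"unneth vanishing uses that in $\phi(\lambda',\vec m')$ each nonzero $L_{m'_i}$ is pulled back from a \emph{distinct} factor of ${\Bbb P}^{\lambda'}$, so the top-degree constraint $m_i\le m'_i$ with equality throughout forces $\vec m=\vec m'$; this is exactly where the definition of $\phi$ (projection to a product ${\Bbb P}^\mu$ of distinct factors) is needed, and you should state it rather than let it hide inside ``K\"unneth bookkeeping.'' Aside from that, this is the intended argument; the only addition beyond [Lee-P] is the reduction to $s=1$, which is correct and clean.
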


\bigskip

\subsection{Algebraic cobordism of filtered vector bundles on varieties.}

We now explain how the construction of $\omega_{\ast,r}(X)$ in [Lee-P]
 gives immediately
 double point cobordism groups of filtered vector bundles on varieties.

\bigskip

\begin{flushleft}
{\bf Filtration, gradation, and list generalizations
     of $\omega_{\ast,r}(X)$.}
\end{flushleft}
The cobordism group $\omega_{\ast,r}(X)$ constructed in [Lee-P]
 has three natural generalizations:
 \begin{itemize}
  \item[(1)]
   Double point cobordism group $\omega_{\ast, r_1<\,\cdots\,<r_s}(X)$
    of {\it filtered vector bundles} on smooth varieties,
   constructed
    by replacing ${\cal M}_{n,1^{r_1<\,\cdots\,<r_s}}(X)$
    in the construction of $\omega_{n,1^{r_1<\,\cdots\,<r_s}}(X)$ by
   $$
    \begin{array}{l}
     {\cal M}_{n,r_1<\,\cdots\,<r_s}(X) \\[1.2ex]
     \hspace{1em} :=\;
      \left\{
       [f:Y\rightarrow X, E_1\subset\;\cdots\;\subset E_s]\:
       \left|\:
        \begin{array}{l}
         \mbox{\boldmath $\cdot$}\;\;
          \mbox{$Y\in\boldSm_{\Bbb C}$ of dimension $n$,}\\
         \mbox{\boldmath $\cdot$}\;\;
          \mbox{$f$ projective,}\\
         \mbox{\boldmath $\cdot$}\;\;
          \mbox{$E_1\subset\;\cdots\;\subset E_s$
                                     filtered vector bundle}\\
         \hspace{1em}
          \mbox{with $\rank(E_i)=r_i$}\,
        \end{array}
       \right.
      \right\}\,,\\[5ex]
      \hspace{1em}
       \mbox{for $\;\;n\in{\Bbb Z}_{\ge 0}\,$.}
    \end{array}
   $$

  \item[(2)]
   Double point cobordism group
   $\omega_{\ast, \oplus_{i=1}^sr^{\prime}_i}(X)$
   of {\it graded vector bundles} on smooth varieties,
   constructed similarly from
   $$
    \begin{array}{l}
     {\cal M}_{n,\oplus_{i=1}^sr^{\prime}_i}(X)  \\[1.2ex]
     \hspace{1em} :=\;
      \left\{
       [f:Y\rightarrow X, E_1\oplus\;\cdots\;\oplus E_s]\:
       \left|\:
        \begin{array}{l}
         \mbox{\boldmath $\cdot$}\;\;
          \mbox{$Y\in\boldSm_{\Bbb C}$ of dimension $n$,}\\
         \mbox{\boldmath $\cdot$}\;\;
          \mbox{$f$ projective,}\\
         \mbox{\boldmath $\cdot$}\;\;
          \mbox{$E_1\oplus\;\cdots\;\oplus E_s$
                                     graded vector bundle}\\
         \hspace{1em}
          \mbox{with $\grade(E_i)=i$, $\rank(E_i)=r^{\prime}_i$}\,
        \end{array}
       \right.
      \right\}\,,\\[5ex]
      \hspace{1em}
       \mbox{for $\;\;n\in{\Bbb Z}_{\ge 0}\,$.}
    \end{array}
   $$

  \item[(3)]
   Double point cobordism group
   $\omega_{\ast,(r^{\prime\prime}_i)_{i=1}^s}(X)$
   of (ordered) {\it lists of vector bundles} on smooth varieties,
   constructed similarly from
   $$
    \begin{array}{l}
     {\cal M}_{n,(r^{\prime\prime}_i)_{i=1}^s}(X)  \\[1.2ex]
     \hspace{1em} :=\;
      \left\{
       [f:Y\rightarrow X, (E_1,\;\cdots\;,E_s)]\:
       \left|\:
        \begin{array}{l}
         \mbox{\boldmath $\cdot$}\;\;
          \mbox{$Y\in\boldSm_{\Bbb C}$ of dimension $n$,}\\
         \mbox{\boldmath $\cdot$}\;\;
          \mbox{$f$ projective,}\\
         \mbox{\boldmath $\cdot$}\;\;
          \mbox{$(E_1,\;\cdots\;,E_s)$
                                 list of vector bundles}\\
         \hspace{1em}
          \mbox{with $\rank(E_i)=r^{\prime\prime}_i$}\,
        \end{array}
       \right.
      \right\}\,,\\[5ex]
      \hspace{1em}
       \mbox{for $\;\;n\in{\Bbb Z}_{\ge 0}\,$.}
    \end{array}
   $$
 \end{itemize}
Cf.~Definition~1.1.5, Definition~1.1.6, and Definition~1.1.7.

The following observation is the starting point of the current note:

\begin{lemma}
{\bf [Filtration, gradation, and list: three in one].}
 Under the above setting,
 the homomorphisms of monoids\footnote{Here,
                      we set the convention that $r_0=0$ and $E_0=0$.}
 $$
  \xymatrix{
   {\cal M}_{\ast, r_1<\,\cdots\,<r_s}(X)\ar[rr]^-{gr}
    && {\cal M}_{\ast, \oplus_{i=1}^s(r_i-r_{i-1})}(X)\ar[rr]^-{list}
    && {\cal M}_{\ast, (r_i-r_{i-1})_{i=1}^s}(X)
  }
 $$
 with
 $$
  \begin{array}{l}
   [\,f:Y\rightarrow X,
    E_1\subset E_2\subset\,\cdots\,\subset E_s\,] \\[1.2ex]
    \hspace{6em}\longmapsto\hspace{1em}
    [\,f:Y\rightarrow X,
       E_1\oplus (E_2/E_1)\oplus\,\cdots\,\oplus (E_s/E_{s-1})\,]
                                                  \\[1.2ex]
    \hspace{6em}\longmapsto\hspace{1em}
    [\,f:Y\rightarrow X,
      (E_1, E_2/E_1, \,\cdots\,, E_s/E_{s-1})\,]
   \end{array}
 $$
 induce group isomorphisms
 $$
  \xymatrix{
   \omega_{\ast, r_1<\,\cdots\,<r_s}(X)\ar[rr]^-{gr}_{\sim\hspace{1em}}
   && \omega_{\ast, \oplus_{i=1}^s(r_i-r_{i-1})}(X)\ar[rr]^-{list}_{\sim}
   && \omega_{\ast, (r_i-r_{i-1})_{i=1}^s}(X)\,,
  }
 $$
 which are also $\omega_{\ast}({\Bbb C})$-module isomorphisms.
\end{lemma}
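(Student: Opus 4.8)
The plan is to reduce everything to the corresponding statement for lists of lists of line bundles, where the result is already available as Lemma~1.1.8, Theorem~1.1.10, and Theorem~1.1.11. First I would observe that the three monoids appearing in the lemma each carry a further ``decoration-by-line-bundles'' refinement: given a filtered vector bundle $E_1\subset\cdots\subset E_s$ on $Y$, choosing full flags of subbundles refining the filtration and picking a splitting of each successive quotient into line bundles presents the data as an element of ${\cal M}_{n,1^{r_1<\cdots<r_s}}(Y)$ (for the appropriate grouping type $r_1<\cdots<r_s$), and similarly for the graded and list versions via $(r_i-r_{i-1})_{i=1}^s$. The key input is the splitting principle: in the cobordism groups $\omega_{\ast}$, one may always pass to a projective-bundle (flag-bundle) tower over $Y$ on which every vector bundle in sight acquires a filtration with line-bundle quotients, and the class is recovered by pushforward. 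This is exactly the mechanism already used in [Lee-P] to compute $\omega_{\ast,r}(X)$ from $\omega_{\ast,1^r}(X)$, so I would invoke it rather than reprove it.

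The main steps, in order, are: \emph{(i)} Define on objects the maps $gr$ and $list$ as in the statement, and check they are well-defined homomorphisms of monoids — this is routine since $E_i\mapsto E_i/E_{i-1}$ is functorial for isomorphisms and compatible with disjoint union. \emph{(ii)} Show they descend to the group completions and kill double point relations: here one uses that the formation of the associated graded is stable under restriction to the smooth divisors $A$, $B$ of a double point degeneration and under pullback to ${\Bbb P}(\pi)$ — because $A,B$ intersect $Y$ transversely, each $E_i|_A$, $E_i|_B$, $E_i|_{{\Bbb P}(\pi)}$ still forms a filtration of the right ranks, and the quotient of a restriction is the restriction of the quotient. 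Hence a double point relation for filtered bundles maps to a double point relation for graded bundles, and similarly $list$ maps graded double point relations to list double point relations. \emph{(iii)} Construct the inverses. For $list\circ gr$ going back: given a list $(E_1,\dots,E_s)$, set $F_i:=E_1\oplus\cdots\oplus E_i$, which gives a filtered bundle with $\rank F_i=\sum_{j\le i}(r_j-r_{j-1})=r_i$ and $F_i/F_{i-1}\cong E_i$; this is manifestly a monoid homomorphism ${\cal M}_{\ast,(r_i-r_{i-1})}(X)\to{\cal M}_{\ast,r_1<\cdots<r_s}(X)$ splitting $list\circ gr$ on the level of isomorphism classes \emph{already before passing to cobordism}, because $gr$ of a split filtration recovers the summands. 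The only thing that is not already an isomorphism of monoids is that a general filtered bundle is not split; but in $\omega_{\ast,r_1<\cdots<r_s}(X)$ every filtered bundle equals its associated split bundle, which is the content one extracts by the splitting-principle argument: over a suitable flag bundle $\rho\colon \widetilde Y\to Y$ the pulled-back filtration splits, and pushing the resulting double point relations (coming from the deformation to the normal cone / the standard degeneration trivializing an extension) down along $\rho$ exhibits $[f,E_\bullet]-[f, gr E_\bullet]$ as a sum of double point relations.

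The step I expect to be the main obstacle is \emph{(iii)}, specifically verifying that the extension class of a nonsplit filtration dies in $\omega_{\ast,r_1<\cdots<r_s}(X)$ — i.e.\ that the filtered-bundle cobordism group really is computed by \emph{split} (hence by graded) data. Everything else is bookkeeping, but this is where the actual geometry enters: one needs the analogue, for filtered bundles, of the deformation argument that [Lee-P] uses to reduce arbitrary bundles to direct sums of line bundles. Concretely, for a short exact sequence $0\to E'\to E\to E''\to 0$ one forms the family over ${\Bbb A}^1$ (or ${\Bbb P}^1$) interpolating between $E$ at $t\neq 0$ and $E'\oplus E''$ at $t=0$ — e.g.\ via the Rees construction $\bigoplus_t t^{\,\bullet}E'+E$ — realize it as a double point degeneration after suitable compactification and resolution, and carry the filtration along. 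I would cite [Lee-P: Sec.~4] for the fact that this machine works and only indicate the modification: the filtration $E_1\subset\cdots\subset E_s$ is preserved by the Rees deformation applied to the \emph{outermost} nonsplit step, so by descending induction on $s$ one splits off $E_s/E_{s-1}$, then $E_{s-1}/E_{s-2}$, etc., each step contributing only double point relations. Granting that, the composites $list\circ gr$ and its reverse, and $gr$ and its reverse, are mutually inverse on cobordism groups; $\omega_{\ast}({\Bbb C})$-linearity is immediate because all the constructions commute with $\times{\Bbb P}^\lambda$ and with $f\circ\pr_Y$, exactly as in Theorem~1.1.11. This completes the proof modulo the cited splitting principle.
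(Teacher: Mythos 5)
Your core argument tracks the paper's proof closely: step (ii) is the paper's Part~(a) (compatibility of $\gr$ and $\llist$ with double point relations); the right inverse $F_i := E_1\oplus\cdots\oplus E_i$ in step~(iii) is exactly the paper's $\gr^{-1,R}$; and the Rees deformation over ${\Bbb A}^1$ that pulls an extension to its split form, iterated across the filtration, is exactly the content of the paper's sub-lemmas (b.1) (the one-step family $\tilde F_{{\Bbb A}^1}$ on $Y\times{\Bbb A}^1$ with $\tilde F_t\cong F$ for $t\neq 0$ and $\cong E\oplus G$ at $t=0$, cited from [Lee-P: proof of Proposition~12]) and (b.2) (carrying the whole filtration along and iterating). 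The only substantive divergence in the iteration is that the paper peels off $E_1$ first, then $E_2/E_1$, and so on, while you propose peeling from the top starting with $E_s/E_{s-1}$; both directions work.

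What does not work is the framing in your opening paragraph and in the middle of step~(iii): ``over a suitable flag bundle $\rho\colon\widetilde Y\to Y$ the pulled-back filtration splits.'' Pulling back to a flag bundle \emph{refines} the filtration (producing a complete flag with line-bundle successive quotients); it does \emph{not} kill the extension classes, so it does \emph{not} split $E_1\subset\cdots\subset E_s$ into its associated graded. The splitting principle is therefore not the mechanism here, and your opening plan to ``reduce everything to lists of lists of line bundles'' is never actually executed by your later steps. The mechanism that does the work --- and the one you do invoke concretely --- is the deformation of each extension class to zero over ${\Bbb A}^1$, carried out entirely over $Y\times{\Bbb A}^1$ with $Y$ fixed and no auxiliary flag bundle. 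With the flag-bundle framing excised, your proof coincides with the paper's.
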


\begin{proof} We prove the lemma in two parts.

 \bigskip
 \noindent
 $(a)$\hspace{1ex}
 The sequence of monoid homomorphisms
 $$
  \xymatrix{
   {\cal M}_{\ast, r_1<\,\cdots\,<r_s}(X)\ar[rr]^-{gr}
    && {\cal M}_{\ast, \oplus_{i=1}^s(r_i-r_{i-1})}(X)\ar[rr]^-{list}
    && {\cal M}_{\ast, (r_i-r_{i-1})_{i=1}^s}(X)
  }
 $$
 extend canonically to
 a sequence of group homomorphism after completion:
 $$
  \xymatrix{
   {\cal M}_{\ast, r_1<\,\cdots\,<r_s}(X)^+\ar[rr]^-{gr}
    && {\cal M}_{\ast, \oplus_{i=1}^s(r_i-r_{i-1})}(X)^+\ar[rr]^-{list}
    && {\cal M}_{\ast, (r_i-r_{i-1})_{i=1}^s}(X)^+\,.
  }
 $$
 The latter are compatible with double point relations over $X\,$:
 $$
  \xymatrix{
   {\cal R}_{\ast, r_1<\,\cdots\,<r_s}(X)\ar[rr]^-{gr}
    && {\cal R}_{\ast, \oplus_{i=1}^s(r_i-r_{i-1})}(X)\ar[rr]^-{list}
    && {\cal R}_{\ast, (r_i-r_{i-1})_{i=1}^s}(X)
  }
 $$
 and, hence, descend to a sequence of group homomorphisms:
 $$
  \xymatrix{
   \omega_{\ast, r_1<\,\cdots\,<r_s}(X)\ar[rr]^-{gr}
    && \omega_{\ast, \oplus_{i=1}^s(r_i-r_{i-1})}(X)\ar[rr]^-{list}
    && \omega_{\ast, (r_i-r_{i-1})_{i=1}^s}(X)\,,
  }
 $$
 which are $\omega_{\ast}({\Bbb C})$-module homomorphisms
  by construction.

 The map
  $\llist: {\cal M}_{\ast, \oplus_{i=1}^s(r_i-r_{i-1})}(X)
    \longrightarrow {\cal M}_{\ast, (r_i-r_{i-1})_{i=1}^s}(X)$
  is a monoid isomorphism.
 Its descent
  $\llist: \omega_{\ast, \oplus_{i=1}^s(r_i-r_{i-1})}(X)
           \longrightarrow \omega_{\ast, (r_i-r_{i-1})_{i=1}^s}(X)$
  is consequently a group/$\omega_{\ast}({\Bbb C})$-module isomorphism.

 The monoid homomorphism
  $\gr:{\cal M}_{\ast, r_1<\,\cdots\,<r_s}(X)
    \longrightarrow
     {\cal M}_{\ast, \oplus_{i=1}^s(r_i-r_{i-1})}(X)$
  has a right inverse, defined by
  $$
   \xymatrix{
    {\cal M}_{\ast, r_1<\,\cdots\,<r_s}(X)
     && {\cal M}_{\ast, \oplus_{i=1}^s(r_i-r_{i-1})}(X)
         \ar[ll]_-{gr^{-1,R}}
   }
  $$
  with
  $$
   \begin{array}{l}
    [\,f:Y\rightarrow X,
     V_1\subset V_1\oplus V_2 \subset\,\cdots\,
      \subset V_1\oplus V_2\oplus\,\cdots\,\oplus\,V_s\,] \\[1.2ex]
     \hspace{12em}
     \longleftarrow\!\!\mbox{---------}\!\raisebox{.1ex}{\tiny$\dashv$}
     \hspace{1em}
     [\,f:Y\rightarrow X,
        V_1\oplus V_2\oplus\,\cdots\,\oplus V_s\,]\,,
    \end{array}
  $$
  where $V_1\oplus\,\cdots\,\oplus V_i$ is contained in
   $V_1\oplus\,\cdots\,\oplus V_i\oplus V_{i+1}$ as the subbundle
   $V_1\oplus\,\cdots\,\oplus V_i\oplus{\bf 0}\,$.
 It satisfies the property that
   $\gr\circ\gr^{-1,R}$ is the identity map
   on ${\cal M}_{\ast, \oplus_{i=1}^s(r_i-r_{i-1})}(X)$.
 This $\gr^{-1,R}$ extends to
   $\gr^{-1,R}:{\cal M}_{\ast, \oplus_{i=1}^s(r_i-r_{i-1})}(X)^+
     \longrightarrow {\cal M}_{\ast, r_1<\,\cdots\,<r_s}(X)^+$
   with
    $\gr^{-1,R}({\cal R}_{\ast, \oplus_{i=1}^s(r_i-r_{i-1})}(X))
     \subset {\cal R}_{\ast, r_1<\,\cdots\,<r_s}(X)$ and, hence,
  descends to a group/$\omega_{\ast}({\Bbb C})$-module homomorphism
  $$
   \xymatrix{
    \omega_{\ast, r_1<\,\cdots\,<r_s}(X)
     && \omega_{\ast, \oplus_{i=1}^s(r_i-r_{i-1})}(X)\,.
         \ar[ll]_-{gr^{-1,R}}
   }
  $$

 \bigskip

 \noindent
 $(b)$\hspace{1ex} We claim that
 \begin{itemize}
  \item[$\cdot$] {\it
   $\gr:
    \omega_{\ast, r_1<\,\cdots\,<r_s}(X)
     \longrightarrow \omega_{\ast, \oplus_{i=1}^s(r_i-r_{i-1})}(X)$
   has the inverse $\gr^{-1}$ given by \\
   $\gr^{-1,R}:\omega_{\ast, \oplus_{i=1}^s(r_i-r_{i-1})}(X)
     \longrightarrow \omega_{\ast, r_1<\,\cdots\,<r_s}(X)$
   defined in Part (a).}
 \end{itemize}

 \medskip
 \noindent
 This is a consequence of the following two observations:

 \bigskip

 \noindent
 {\bf Lemma 1.2.1.(b.1). [from exact sequence to direct sum].}
  {\rm (Cf.\ [Lee-P: part of proof of Proposition~12].)} {\it
  Let
   $$
    0\;\longrightarrow\; E\; \longrightarrow\; F\;
       \longrightarrow\; G\; \longrightarrow\; 0
   $$
    be an exact sequence of vector bundles on $Y$
    with $F$ of rank $r$.
  Then,
   $$
    [f:Y\rightarrow X,F]\; =\; [f:Y\rightarrow X,E\oplus G] \hspace{2em}
    \mbox{in $\;\omega_{\ast,r}(X)$}\,.
   $$
 } 

 \noindent
 {\it Proof.}
  Note that
   $\Ext_Y(G,F)$ is a finite-dimensional ${\Bbb C}$-vector space
    with the origin
    $[0\,\rightarrow\, E\, \rightarrow\, E\oplus F\,
         \rightarrow\, G\; \rightarrow\, 0]\,$.
  A ${\Bbb C}$-line connecting
    $[0\,\rightarrow\, E\, \rightarrow\, F\,
         \rightarrow\, G\, \rightarrow\, 0]$ and
    $[0\,\rightarrow\, E\, \rightarrow\, E\oplus F\,
         \rightarrow\, G\; \rightarrow\, 0]\,$
   is given by the push-out (i.e.\ fibered coproduct) of
    $\,0\,\rightarrow\, E\, \rightarrow\, E\oplus F\,
          \rightarrow\, G\; \rightarrow\, 0\,$
    via the bundle homomorphism
     $E\stackrel{t\cdot}{\longrightarrow} E$,
     $v\mapsto t\cdot v$, for $t\in {\Bbb C}\,$.
  This defines
   a vector bundle $\tilde{F}_{{\Bbb A}^1}$ on $Y\times{\Bbb A}^1$,
    where ${\Bbb A}^1$ is the affine line $\Spec{\Bbb C}[t]$,
   with $\tilde{F}_t:= (\tilde{F}_{{\Bbb A}^1})|_{Y\times\{t\}}
     = F$ on $Y$ for $t\ne{\bf 0}\in{\Bbb A}^1$, and
     $=E\oplus G$ on $Y$ for $t={\bf 0}\,$.
  Explicitly,
   $\tilde{F}_{{\Bbb A}^1}$ is defined by the following exact sequence of
   vector bundles on $Y\times{\Bbb A}^1\,$:
   (for $0\,\rightarrow\, E\, \stackrel{u}{\rightarrow}\, F\,
            \rightarrow\, G\, \rightarrow\, 0$)
   $$
    \xymatrix{
    0\ar[r]
     & E_{{\Bbb A}^1}\ar[r]^-{(-u, t)}
     & F_{{\Bbb A}^1}\oplus E_{{\Bbb A}^1}\ar[r]
     & \tilde{F}_{{\Bbb A}^1}\ar[r] & 0\,,
     }
   $$
   where
    $E_{{\Bbb A}^1}$, $F_{{\Bbb A}^1}$, and $G_{{\Bbb A}^1}$
     (resp.\ $E_{{\Bbb A}^1}\stackrel{-u}{\longrightarrow}F_{{\Bbb A}^1}$)
    are the pullback of $E$, $F$, $G$
     (resp.\ $E\stackrel{-u}{\longrightarrow}F$) on $Y$
    to $Y\times {\Bbb A}^1$
    by the projection map $Y\times {\Bbb A}^1\rightarrow Y$.

 \noindent\hspace{38em}$\square$

 \bigskip

 \noindent
 {\bf Lemma~1.2.1.(b.2). [gradation cobordant to filtration].} {\it
  Let
   $\alpha\in {\cal M}_{\ast, r_1<\,\cdots\,<r_s}(X)\,$.
  Then
   $$
    [\alpha]\;=\; [\gr^{-1,R}\circ\gr(\alpha)]
     \hspace{1em}\in\; \omega_{\ast, r_1<\,\cdots\,<r_s}(X)\,.
   $$
 } 

 \noindent
 {\it Proof.}
  Let $\alpha$ be given by
   $[\,f:Y\rightarrow X, E_1\subset E_2\subset\,\cdots\,\subset E_s\,]
    \in {\cal M}_{\ast, r_1<\,\cdots\,<r_s}(X)\,$.
  Associated to $\alpha$ is an inclusion tower of short exact sequences
   of vector bundles on $Y$:
   $$
    \xymatrix{
     0 \ar[r]
      & E_1 \raisebox{-1ex}{\rule{0ex}{1ex}} \ar[r] \ar@{=}[d]
      & E_1 \raisebox{-1ex}{\rule{0ex}{1ex}} \ar[r] \ar@{^{(}->}[d]
      & 0    \raisebox{-1ex}{\rule{0ex}{1ex}} \ar[r] \ar@{^{(}->}[d]
      & 0 \\
     0 \ar[r]
      & E_1 \raisebox{-1ex}{\rule{0ex}{3ex}} \ar[r] \ar@{=}[d]
      & E_2
        \raisebox{-1ex}{\rule{0ex}{3ex}} \ar[r] \ar@{^{(}->}[d]
      & E_2/E_1
        \raisebox{-1ex}{\rule{0ex}{3ex}} \ar[r] \ar@{^{(}->}[d]
      & 0 \\
     0 \ar[r]
      & E_1 \raisebox{-1ex}{\rule{0ex}{3ex}} \ar[r] \ar@{=}[d]
      & E_3 \raisebox{-1ex}{\rule{0ex}{3ex}} \ar[r] \ar@{^{(}->}[d]
      & E_3/E_1
        \raisebox{-1ex}{\rule{0ex}{3ex}} \ar[r] \ar@{^{(}->}[d]
      & 0 \\
     0 \ar[r]
      & \hspace{.8em}\vdots\hspace{.8em}
        \raisebox{-1ex}{\rule{0ex}{3ex}} \ar[r] \ar@{=}[d]
      & \hspace{1em}\vdots\hspace{1em}
        \raisebox{-1ex}{\rule{0ex}{3ex}} \ar[r] \ar@{^{(}->}[d]
      & \hspace{1em}\vdots\hspace{1em}
        \raisebox{-1ex}{\rule{0ex}{3ex}} \ar[r] \ar@{^{(}->}[d]
      & 0 \\
     0 \ar[r]
      & E_1 \raisebox{-1ex}{\rule{0ex}{3ex}} \ar[r] \ar@{=}[d]
      & E_{s-1} \raisebox{-1ex}{\rule{0ex}{3ex}} \ar[r] \ar@{^{(}->}[d]
      & E_{s-1}/E_1 \raisebox{-1ex}{\rule{0ex}{3ex}} \ar[r] \ar@{^{(}->}[d]
      & 0 \\
     0 \ar[r]
      & E_1 \raisebox{-1ex}{\rule{0ex}{3ex}} \ar[r]
      & E_s \raisebox{-1ex}{\rule{0ex}{3ex}} \ar[r]
      & E_s/E_1 \raisebox{-1ex}{\rule{0ex}{3ex}} \ar[r]
      & 0\;,
    }
   $$
   which defines an element in the ${\Bbb C}$-vector space
    $$
     \alpha_{(1)}\;\in\;
      \prod_{i=1}^s\,\Ext_Y(E_i/E_1, E_1)\,.
    $$
   (Here, $E_0=0$ be convention.)
   Recall Lemma~1.2.1.(b.1).
   A simultaneous push-out of the above tower of exact sequences
     by the bundle homomorphism
     $E_1\stackrel{t\cdot}{\longrightarrow} E_1$,
     $v\mapsto t\cdot v$, for $t\in {\Bbb C}$
    gives rise to a filtered vector bundles
     $$
      F^{\bullet}E^{\,\sim}_{s, {\Bbb A}^1}\;\;:\;\;
       E^{\,\sim}_{1,{\Bbb A}^1}\; \subset\; E^{\,\sim}_{2,{\Bbb A}^1}\;
       \subset\; \cdots\; \subset\; E^{\,\sim}_{s,{\Bbb A}^1}
     $$
     on $Y\times{\Bbb A}^1$ over ${\Bbb A}^1=\Spec{\Bbb C}[t]$
    with
     $(F^{\bullet}E^{\,\sim}_{s,{\Bbb A}^1})_t
      := (F^{\bullet}E^{\,\sim}_{s, {\Bbb A}^1})|_{Y\times\{t\}}
       = (E_1\subset E_2\subset\,\cdots\,\subset E_s)$
       on $Y$ at $t\ne{\bf 0}\in{\Bbb A}^1$, and
     $=\,$ the direct sum
      $$
       F^{\bullet}(E_1\oplus(E_s/E_1))\;\;:\;\;
        E_1\;\subset\; E_1\oplus (E_2/E_1)\;\subset\;\cdots\;\subset\;
        E_1\oplus(E_s/E_1)
      $$
      of the first and the third column of the above tower,
      as filtered vector bundles, on $Y$ at $t={\bf 0}\,$.

   Applying the push-out construction next to the filtered vector bundle
    $$
     F^{\bullet}(E_s/E_1)\;\;:\;\;
      E_2/E_1\;\subset\; E_3/E_1\;\subset\; \cdots\; \subset\; E_s/E_1
    $$
     with the canonical isomorphisms
     $(E_i/E_j)/(E_{i^{\prime}}/E_j)\simeq E_i/E_{i^{\prime}}$
     taken into account,
    augmenting the result on the left by $0\subset\,\bullet\,$,
     and then
    taking a direct sum with the pull-back of the constant filtration
     $E_1\subset\,\cdots\,\subset E_1$ (with $s$-many terms) on $Y$
     to $Y\times{\Bbb A}^1$ via the projection map
     $Y\times{\Bbb A}^1\rightarrow Y$,
   one obtains a second filtered vector bundle
     $$
       F^{\bullet}(E_1\oplus(E_s/E_1))
                               ^{\,\sim}_{{\Bbb A}^1}\;\;:\;\;
       E^{\,\sim}_{1,{\Bbb A}^1}\; \subset\;
       (E_1\oplus(E_2/E_1))^{\,\sim}_{{\Bbb A}^1}\;
       \subset\; \cdots\; \subset\;
       (E_1\oplus(E_s/E_1))^{\,\sim}_{{\Bbb A}^1}
     $$
     on $Y\times{\Bbb A}^1$ over ${\Bbb A}^1=\Spec{\Bbb C}[t]$
    with
     $(F^{\bullet}(E_1\oplus(E_s/E_1))^{\,\sim}_{{\Bbb A}^1})_t
      := (F^{\bullet}(E_1\oplus(E_s/E_1))
                     ^{\,\sim}_{{\Bbb A}^1})|_{Y\times\{t\}}\,
       = F^{\bullet}(E_1\oplus(E_s/E_1))$, as constructed previously,
       on $Y$ at $t\ne{\bf 0}\in{\Bbb A}^1$, and equal to
      $$
       \begin{array}{l}
       F^{\bullet}(E_1\oplus(E_2/E_1)\oplus(E_s/E_2))\;\;:\;\;\\[1.2ex]
       \hspace{2em}
        E_1\;\subset\; E_1\oplus (E_2/E_1)\;\subset\;
        E_1\oplus(E_2/E_1)\oplus(E_3/E_2)\;
        \subset\;  \cdots\; \subset\;
        E_1\oplus(E_2/E_1)\oplus(E_{s}/E_2)
       \end{array}
      $$
      on $Y$ at $t={\bf 0}\,$.

   Repeating this procedure of
    \begin{itemize}
     \item[$\cdot$]
      {\it push-out $\Rightarrow$
        augment on the left by a $0$ filtration $\Rightarrow$
        direct sum with a constant filtration},
    \end{itemize}
   one obtains a length-$(s-1)$ sequence of filtered vector bundles
    on $Y\times{\Bbb A}^1$ that interpolate consecutively
   $$
    F^{\bullet}E_s\; \leadsto\;
    F^{\bullet}(E_1\oplus(E_s/E_1))\;\leadsto\;
    F^{\bullet}(E_1\oplus(E_2/E_1)\oplus(E_s/E_2))\;
     \leadsto\;\cdots\;\leadsto\;
    F^{\bullet}(\oplus_{i=1}^s(E_i/E_{i-1}))\,.
   $$
  It follows that
   \begin{eqnarray*}
    [\alpha]
     & = & [f:Y\rightarrow X, F^{\bullet}(E_1\oplus(E_s/E_1))] \\[1.2ex]
     & = & \cdots\;
      =\; [f:Y\rightarrow X, F^{\bullet}(\oplus_{i=1}^s(E_i/E_{i-1}))]\;
      =\; [\gr^{-1,R}\circ\gr(\alpha)]
   \end{eqnarray*}
   in $\omega_{\ast, r_1<\,\cdots\,<r_s}(X)\,$.
  This proves the sub-lemma.

 \noindent\hspace{38em}$\square$

\end{proof}

\bigskip

\begin{flushleft}
{\bf $\omega_{\ast, (r_i-r_{i-1})_{i=1}^s}(X)$ as a quotient of
     $\omega_{\ast, 1^{r_1<\,\cdots\,<r_s}}(X)$.}
\end{flushleft}
Through the canonical isomorphism
 $\omega_{\ast, r_1<\,\cdots\,<r_s}(X)
   \simeq \omega_{\ast, (r_i-r_{i-1})_{i=1}^s}(X)$
 in  Lemma~1.2.1,
 one can shift the focus
  from the original cobordism groups of filtered bundles on varieties
  to the (equivalent but more convenient) cobordism groups of
  lists of vector bundles on varieties.

The canonical monoid homomorphism
 $$
  \kappa_{\cal M}\; :\;
   {\cal M}_{\ast, 1^{r_1<\,\cdots\,<r_s}}(X)\;
    \longrightarrow\; {\cal M}_{\ast, (r_i-r_{i-1})_{i=1}^s}(X)
 $$
 with
 $$
  \begin{array}{l}
   [\,f:Y\rightarrow X,
       (L_1,\,\cdots\,,L_{r_1}; L_{r_1+1},\,\cdots\,,L_{r_2};
        \,\cdots\,; L_{r_{s-1}+1},\,\cdots\,,L_{r_s})\,]    \\[1.2ex]
   \hspace{8em}\longmapsto\;
   \left[
    f:Y\rightarrow X\,,\,
       \left(
        \oplus_{i=1}^{r_1}L_i\,,\, \oplus_{i=r_1+1}^{r_2}L_i\,,\,
         \cdots\,,\,\oplus_{i=r_{s-1}+1}^{r_s}L_i
        \right)
    \right]
  \end{array}
 $$
 extends to a group homomorphism
 $$
  \kappa_{{\cal M}^+}\; :\;
   {\cal M}_{\ast, 1^{r_1<\,\cdots\,<r_s}}(X)^+\;
    \longrightarrow\; {\cal M}_{\ast, (r_i-r_{i-1})_{i=1}^s}(X)^+
 $$
 that maps ${\cal R}_{\ast, 1^{r_1<\,\cdots\,<r_s}}(X)$
  to ${\cal R}_{\ast, (r_i-r_{i-1})_{i=1}^s}(X)$
  and, hence,
 descends to a group/$\omega_{\ast}({\Bbb C})$-module homomorphism
 $$
  \kappa := \kappa_{\omega}\; :\;
   \omega_{\ast, 1^{r_1<\,\cdots\,<r_s}}(X)\;
    \longrightarrow\; \omega_{\ast, (r_i-r_{i-1})_{i=1}^s}(X)\,.
 $$
 The permutation group $\times_{i=1}^s\Sym_{\,r_i-r_{i-1}}$ acts
  on ${\cal M}_{\ast, 1^{r_1<\,\cdots\,<r_s}}(X)$
  by permutation of the elements of a list in a list of lists:
  $$
   \begin{array}{l}
    [\,f:Y\rightarrow X,
        (L_1,\,\cdots\,,L_{r_1}; L_{r_1+1},\,\cdots\,,L_{r_2};
         \,\cdots\,; L_{r_{s-1}+1},\,\cdots\,,L_{r_s})\,]\;\;
    \stackrel{\sigma=(\sigma_1,\,\cdots\,\sigma_s)}{
     \raisebox{.1ex}{
      \tiny$\vdash$}\!\mbox{---------------}\!\!\longrightarrow
                     }                                   \\[1.2ex]
    \hspace{4em}
        [\,f:Y\rightarrow X,
        (L_{\sigma_1(1)},\,\cdots\,,L_{\sigma_1(r_1)};
         L_{\sigma_2(r_1+1)},\,\cdots\,,L_{\sigma_2(r_2)};
         \,\cdots\,;
         L_{\sigma_s(r_{s-1}+1)},\,\cdots\,,L_{\sigma_s(r_s)})\,]\,.
   \end{array}
  $$
 This extends to an action on the group completion
  ${\cal M}_{\ast, (r_i-r_{i-1})_{i=1}^s}(X)^+$,
  which, then, descends to an action on its quotient
  $\omega_{\ast, (r_i-r_{i-1})_{i=1}^s}(X)$.
 Endow
  ${\cal M}_{\ast, (r_i-r_{i-1})_{i=1}^s}(X)$,
  ${\cal M}_{\ast, (r_i-r_{i-1})_{i=1}^s}(X)^+$, and
  $\omega_{\ast, (r_i-r_{i-1})_{i=1}^s}(X)$
  with the trivial $\times_{i=1}^s\Sym_{\,r_i-r_{i-1}}$-action.
 Then $\kappa_{\cal M}$, $\kappa_{{\cal M}^+}$, and $\kappa$
  are all $\times_{i=1}^s\Sym_{\,r_i-r_{i-1}}$-equivariant maps.

\begin{proposition}
{\bf [$\omega_{\ast, (r_i-r_{i-1})_{i=1}^s}(X)$
      as quotient of $\omega_{\ast, 1^{r_1<\,\cdots\,<r_s}}(X)$].}
 The $\times_{i=1}^s\Sym_{\,r_i-r_{i-1}}$-equivariant map
  $\kappa\,:\,
   \omega_{\ast, 1^{r_1<\,\cdots\,<r_s}}(X)\,
    \longrightarrow\,\omega_{\ast, (r_i-r_{i-1})_{i=1}^s}(X)\,$
  constructed above
 is a group/$\omega_{\ast}({\Bbb C})$-module epimorphism.
\end{proposition}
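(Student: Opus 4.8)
The plan is first to clear away everything but surjectivity. By its construction preceding the statement, $\kappa=\kappa_\omega$ is a homomorphism of $\omega_\ast({\Bbb C})$-modules and is $\times_{i=1}^s\Sym_{\,r_i-r_{i-1}}$-equivariant, so it will be an epimorphism as soon as it is shown to be onto. I would then reduce this to a generation statement: if a class $[g\colon Z\to X,(E_1,\,\cdots\,,E_s)]\in\omega_{\ast,(r_i-r_{i-1})_{i=1}^s}(X)$ has every $E_i$ a direct sum of line bundles, say $E_i=\oplus_{j=r_{i-1}+1}^{r_i}L_j$ with $r_0=0$, then by the definition of $\kappa_{\cal M}$ it is the image under $\kappa$ of $[g\colon Z\to X,(L_1,\,\cdots\,,L_{r_1};L_{r_1+1},\,\cdots\,,L_{r_2};\,\cdots\,;L_{r_{s-1}+1},\,\cdots\,,L_{r_s})]$. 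Calling such a tuple \emph{totally split}, it therefore suffices to prove that $\omega_{\ast,(r_i-r_{i-1})_{i=1}^s}(X)$ is generated as an abelian group by the classes of totally split tuples.

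For this I would carry the argument of [Lee-P: Sec.~4] — whose $s=1$ case is precisely the statement that $\omega_{\ast,r}(X)$ is generated over ${\Bbb Z}$ by classes of split bundles — over to $s$-tuples of vector bundles, by induction on the number $k$ of slots $i$ for which $E_i$ is not a direct sum of line bundles (such slots necessarily having $\rank E_i\ge2$). When $k\ge1$, I would fix such a slot $j$ and pass to the flag bundle $\mathrm{Fl}(E_j)\to Z$, over which the pullback of $E_j$ carries a full flag with line-bundle subquotients; applying the list analogue of Lemma~1.2.1.(b.1) repeatedly along this flag, exactly as in the proof of Lemma~1.2.1.(b.2) — its ${\Bbb A}^1$-family of extensions carrying the remaining slots along as pullbacks — replaces, in slot $j$, the pulled-back $E_j$ by a direct sum of line bundles; the base-degeneration moves of [Lee-P: Sec.~4] are then used to pass from $\mathrm{Fl}(E_j)$ back down to a ${\Bbb Z}$-combination of classes over $X$, the other slots once more carried along on each member of the families involved. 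Because the pullback of a direct sum of line bundles is again one, no previously split slot becomes un-split, so the outcome expresses $[g\colon Z\to X,(E_1,\,\cdots\,,E_s)]$ as a ${\Bbb Z}$-combination of classes each with strictly smaller $k$; by induction this is a ${\Bbb Z}$-combination of totally split classes, completing the reduction.

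The hard part will be exactly the compatibility that licenses running [Lee-P: Sec.~4] in the presence of the extra slots. Two things need care: (i) every family realizing one of the double point relations used must actually carry all of the bundles $E_i$ with $i\ne j$; since the base degenerations of [Lee-P] need not leave a higher-rank $E_i$ defined on the degenerate fibres — whereas line bundles do extend over such families — the slotwise reductions have to be interleaved, not performed in isolation; and (ii) the relations so produced must be honest double point relations over $X$ for lists of vector bundles rather than for a single bundle. Both issues are formal in spirit: distinct slots do not interact, and the constructions of [Lee-P: Sec.~4] are assembled from blow-ups, projective and flag bundles, and sections, all of which behave well under adjoining pulled-back bundles; so the passage is expected to go through and is precisely the ``adaptation of [Lee-P: Sec.~4] to the current situation'' already anticipated in the text. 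Granting it, the generation statement holds, hence $\kappa$ is onto, and together with its $\omega_\ast({\Bbb C})$-linearity and $\times_{i=1}^s\Sym_{\,r_i-r_{i-1}}$-equivariance this gives the epimorphism asserted.
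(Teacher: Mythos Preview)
Your reduction---show that $\omega_{\ast,(r_i-r_{i-1})_{i=1}^s}(X)$ is generated by totally split classes---matches the paper's, but the paper proceeds differently from there. It invokes [Lee-P: Sec.~3.1] (not Sec.~4, which concerns the basis theorems) and inducts on $\dim Y$ rather than on your count $k$ of non-split slots. The construction in [Lee-P: Sec.~3.1], iterated over the slots, produces a single \emph{birational} morphism $\hat Y\to Y$ (with Hironaka invoked to keep $\hat Y$ smooth) on which \emph{all} of the pulled-back $V_i$ are simultaneously splittable; the difference $[\hat Y\to X,(\hat V_i)]-[Y\to X,(V_i)]$ then lies in ${\frak m}\cdot\omega_{\ast,(r_i-r_{i-1})_{i=1}^s}(X)$ and is therefore a combination of classes on strictly lower-dimensional varieties, so dimension induction closes the argument in one pass with no slot-by-slot bookkeeping.

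Your flag-bundle mechanism, as written, has a gap: passing from $Z$ to $\mathrm{Fl}(E_j)$ raises the dimension and hence changes the cobordism class, and there is no double-point relation that then ``passes back down'' to recover $[Z\to X,(E_i)]$ from $[\mathrm{Fl}(E_j)\to X,\ldots]$; the [Lee-P] surjectivity argument stays birational rather than ascending to flag bundles in this way. (Your concern (i) is also misdirected: vector bundles of any rank restrict along any morphism, degenerate fibres included, so there is nothing to interleave.) A correct slot-by-slot variant of your idea does exist: apply the $s=1$ surjectivity of [Lee-P: Sec.~3.1] to the class $[\mathrm{id}_Z,E_j]\in\omega_{\ast,r_j-r_{j-1}}(Z)$, i.e.\ work \emph{over $Z$} rather than over $X$; every double-point family used then maps to $Z$, so the remaining $E_i$ (split or not) pull back along it automatically, and pushing forward along $f:Z\to X$ yields your induction step on $k$. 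This repairs your argument, but the paper's single-pass dimension induction is shorter.
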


\begin{proof}
 This follows from the same construction as in [Lee-P: Sec.~3.1].
 It is enough to check that any
  $\alpha$ in $\omega_{\ast, (r_i-r_{i-1})_{i=1}^s}(X)$ of the form
  $[f:Y\rightarrow X, (V_1,\,\cdots\,,V_s)]$ lies in $\Image\kappa$.
 First, note that if $\dimm Y=0$, then $\alpha\in \Image\kappa\,$
  since any ${\Bbb C}$-vector space splits.
 In general,
  let ${\frak m}\subset\omega_{\ast}({\Bbb C})$ be the ideal
  generated by elements of positive dimensions:
  $$
   0\; \longrightarrow\; {\frak m}\; \longrightarrow\;
       \omega_{\ast}({\Bbb C})\; \longrightarrow\;
       {\Bbb Z}\; \longrightarrow\; 0\,.
  $$
 Then, there exists a {\it smooth} variety $\hat{Y}$
  with a birational morphism $\hat{Y}\rightarrow Y$
  such that\footnote{This step
                     requires the result of Hironaka [Hi]
                     on resolution of singularities.}
  \begin{itemize}
   \item[$\cdot$]
    $(V_1,\,\cdots\,,V_s)$ becomes a list of splittable vector bundles
    after being pulled back to $\hat{Y}$;

   \item[$\cdot$]
    denote
     the composition $\hat{Y}\rightarrow Y\stackrel{f}{\rightarrow} X$
      by $\hat{f}$  and
     the pull-back of $(V_1,\,\cdots\,,V_s)$ to $\hat{Y}$
      by $(\hat{V}_1,\,\cdots\,,\hat{V}_s)$,
    then
     $$
      [\hat{f}:\hat{Y}\rightarrow X, (\hat{V}_1,\,\cdots\,,\hat{V}_s)]\,
       -\, [f:Y\rightarrow X, (V_1,\,\cdots\,,V_s)]\;
       \in\; {\frak m}\cdot\omega_{\ast, (r_i-r_{i-1})_{i=1}^s}(X)\,.
     $$
  \end{itemize}
 The proposition now follows by an induction on $\dimm Y$.

\end{proof}


\bigskip

\begin{flushleft}
{\bf Algebraic cobordism of filtered vector bundles on varieties}
\end{flushleft}
With
 the preparation in the previous two themes of this subsection,
 [Lee-P],  and
 Sec.~1.1 of the current note,
one has now the following immediate generalizations of
 Lee and Pandharipande
 to algebraic cobordisms of filtered vector bundles on varieties
 via $\kappa\,$:

\begin{theorem}
{\bf [$\omega_{n,r_1<\,\cdots\,<r_s}({\Bbb C})\otimes_{\Bbb Z}{\Bbb Q}$
      as ${\Bbb Q}$-vector space].}
{\rm (Cf.\ [Lee-P: Theorem~1].)}
 {To} each
  $(\lambda,\vec{\mu})=(\lambda,(\mu_1,\,\cdots\,,\mu_s))
    \in {\cal P}_{n,r_1<r_2<\,\cdots\,<r_s}$,
  associate an element
  $$
   \phi(\lambda,\vec{\mu})\;=\;
   [{\Bbb P}^{\lambda}\,,\, E_1\subset\,\cdots\,\subset E_s]
  $$
  in $\omega_{n,r_1<r_2<\,\cdots\,<r_s}({\Bbb C})$,
 where
  \begin{itemize}
   \item[$\cdot$]
    ${\Bbb P}^{\,\lambda}
     := {\Bbb P}^{\,\lambda_1}\times\,
        \cdots\,\times{\Bbb P}^{\,\lambda_{l(\lambda)}}\,$
    is a product of projective spaces specified by $\lambda$,

   \item[$\cdot$]
    $E_1\subset\,\cdots\,\subset E_s$ is a filtered vector bundle
    on ${\Bbb P}^{\,\lambda}$ defined as follows:
    \begin{itemize}
     \item[$\cdot$]
      Let
       $\mu\subset \lambda$ be the union of $\mu_1,\,\cdots\,,\mu_s$ and
       $\pr^{\lambda}_{\mu}:
        {\Bbb P}^{\lambda}\rightarrow {\Bbb P}^{\mu}$
       be a projection map to a product of components of
        ${\Bbb P}^{\lambda}$ specified by $\mu$.
      Then,
        up to an automorphism on ${\Bbb P}^{\lambda}$
         that permutes equal-dimensional factors,
       $\pr^{\lambda}_{\mu}$ is uniquely determined by $(\lambda,\mu)$
        and, hence, by $(\lambda,\vec{\mu})$.

     \item[$\cdot$]
      For $m\in\mu$,
       let $\hat{\pr}_{(m)}$ be the composition of projection maps
        ${\Bbb P}^{\lambda}\rightarrow {\Bbb P}^{\mu}
                                        \rightarrow {\Bbb P}^m$ and
      define the line bundle
       $L_m:=\hat{\pr}_{(m)}^{\ast}({\cal O}_{{\Bbb P}^m}(1))$
       on ${\Bbb P}^{\lambda}$ via pull-back.

     \item[$\cdot$]
      In terms of these line bundles,
      $$
       \begin{array}{crl}
        E_i  & :=
         & {\cal O}_{{\Bbb P}^{\lambda}}^{r_i-\sum_{j=1}^il(\mu_j)}
            \oplus
            \bigoplus_{m\,\in\,\mu_1\cup\,\cdots\,\cup\,\mu_i} L_m
                                                               \\[1.2ex]
         & = & {\cal O}_{{\Bbb P}^{\lambda}}^{r_i-\sum_{j=1}^il(\mu_j)}
                \oplus
                \bigoplus_{m\,\in\,\mu_1\cup\,\cdots\,\cup\,\mu_i}
                 \hat{\pr}_{(m)}^{\ast}({\cal O}_{{\Bbb P}^m}(1))\,.
       \end{array}
      $$
    \end{itemize}
  \end{itemize}
 Then,
  $$
   \omega_{n,r_1<\,\cdots\,<r_s}({\Bbb C})\otimes_{\Bbb Z}{\Bbb Q}\;
   =\; \bigoplus_{(\lambda,\vec{\mu})\,
                   \in\, {\cal P}_{n,r_1<r_2<\,\cdots\,<r_s}}
        {\Bbb Q}\cdot\phi(\lambda,\vec{\mu})\,.
  $$
\end{theorem}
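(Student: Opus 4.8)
The plan is to read this off from the line-bundle case (Theorem~1.1.9) by pushing it forward along the epimorphism $\kappa$ of Proposition~1.2.2, the only genuinely new ingredient being a Chern-number (``Chern invariant'') argument for lists of vector bundles modelled on [Lee-P: Sec.~2.3, Sec.~2.6]. First, by Lemma~1.2.1 the maps $\gr$ and $\llist$ identify $\omega_{n,r_1<\cdots<r_s}({\Bbb C})$ with $\omega_{n,(r_i-r_{i-1})_{i=1}^s}({\Bbb C})$, and under this identification the class $\phi(\lambda,\vec{\mu})$ of the statement equals $\kappa\bigl(\phi(\lambda,\vec{m}^{\mbox{\boldmath $\cdot$}})\bigr)$ for any partition list $(\lambda,\vec{m}^{\mbox{\boldmath $\cdot$}})\in{\cal P}_{n,1^{r_1<\cdots<r_s}}$ whose $i$-th block of nonzero entries is $\mu_i$: indeed $\kappa$ replaces the $i$-th block of line bundles of $\phi(\lambda,\vec{m}^{\mbox{\boldmath $\cdot$}})$ by their direct sum ${\cal O}_{{\Bbb P}^{\lambda}}^{(r_i-r_{i-1})-l(\mu_i)}\oplus\bigoplus_{m\in\mu_i}L_m$, and the resulting list of bundles is exactly $\llist\circ\gr$ applied to the filtered bundle $E_1\subset\cdots\subset E_s$ of the statement. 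Since, by Proposition~1.2.2, $\kappa$ is a $\times_{i=1}^s\Sym_{\,r_i-r_{i-1}}$-equivariant $\omega_{\ast}({\Bbb C})$-module epimorphism whose target carries the trivial action, after $\otimes_{\Bbb Z}{\Bbb Q}$ it factors through the coinvariants and yields an $\omega_{\ast}({\Bbb C})$-module epimorphism $\bar\kappa:\bigl(\omega_{n,1^{r_1<\cdots<r_s}}({\Bbb C})\otimes_{\Bbb Z}{\Bbb Q}\bigr)_{\times_{i=1}^s\Sym_{\,r_i-r_{i-1}}}\twoheadrightarrow\omega_{n,(r_i-r_{i-1})_{i=1}^s}({\Bbb C})\otimes_{\Bbb Z}{\Bbb Q}$.

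Next I would identify the source of $\bar\kappa$. By Theorem~1.1.9, before taking coinvariants it is the permutation ${\Bbb Q}$-module on the basis $\{\phi(\lambda,\vec{m}^{\mbox{\boldmath $\cdot$}})\}_{(\lambda,\vec{m}^{\mbox{\boldmath $\cdot$}})\in{\cal P}_{n,1^{r_1<\cdots<r_s}}}$, with $\times_{i=1}^s\Sym_{\,r_i-r_{i-1}}$ permuting the entries within each block of $\vec{m}$. Over ${\Bbb Q}$ the coinvariants of a permutation module are free on the orbit set, and the orbit of $(\lambda,\vec{m}^{\mbox{\boldmath $\cdot$}})$ is recorded precisely by $\lambda$ together with the multiset of nonzero entries in each block --- that is, by a point of ${\cal P}_{n,r_1<\cdots<r_s}$ (Definition~1.1.3). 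Hence the source of $\bar\kappa$ has dimension $|{\cal P}_{n,r_1<\cdots<r_s}|$, and its orbit-basis is carried by $\bar\kappa$ onto $\{\phi(\lambda,\vec{\mu})\}_{(\lambda,\vec{\mu})\in{\cal P}_{n,r_1<\cdots<r_s}}$; in particular these classes already span $\omega_{n,r_1<\cdots<r_s}({\Bbb C})\otimes_{\Bbb Z}{\Bbb Q}$, and it remains only to show that $\bar\kappa$ is injective.

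For injectivity I would bring in the Chern invariant. Let ${\cal C}^{\prime}_{n}$ be the space of graded degree-$n$ polynomials in $c_{\bullet}(T_Y),c_{\bullet}(V_1),\ldots,c_{\bullet}(V_s)$; the evaluation map $\rho^{\prime}\bigl([Y,(V_1,\ldots,V_s)],\Theta\bigr)=\int_Y\Theta$ descends to a well-defined pairing on $\omega_{n,(r_i-r_{i-1})_{i=1}^s}({\Bbb C})\otimes_{\Bbb Z}{\Bbb Q}$ --- the point being that it annihilates double point relations over ${\Bbb C}$, which runs exactly as in [Lee-P: Sec.~2.3, Sec.~2.6] because the decorating vector bundles of a double point relation extend over the degenerating family, so the double-point additivity of Chern numbers applies verbatim. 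Expressing each $c_j(V_i)$ as the $j$-th elementary symmetric function in the $r_i-r_{i-1}$ Chern roots of the $i$-th block identifies ${\cal C}^{\prime}_n$ with the $\times_{i=1}^s\Sym_{\,r_i-r_{i-1}}$-invariant subspace of the Chern-polynomial space ${\cal C}_{n,1^{r_1<\cdots<r_s}}$ of Theorem~1.1.12; since the pairing $\rho$ there is $\times_{i=1}^s\Sym_{\,r_i-r_{i-1}}$-compatible, averaging over that group shows $\rho$ descends to a pairing between the coinvariants of $\omega_{n,1^{r_1<\cdots<r_s}}({\Bbb C})\otimes_{\Bbb Z}{\Bbb Q}$ and ${\cal C}^{\prime}_n$ that is still nondegenerate in the first variable. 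By the Whitney sum formula, on the spanning classes $\kappa\phi(\lambda,\vec{m}^{\mbox{\boldmath $\cdot$}})$ --- hence on all of $\omega_{n,(r_i-r_{i-1})_{i=1}^s}({\Bbb C})\otimes_{\Bbb Z}{\Bbb Q}$ by linearity --- one has $\rho^{\prime}\bigl(\bar\kappa(\bar w),\Theta\bigr)=\rho(\bar w,\Theta)$, viewing $\Theta\in{\cal C}^{\prime}_n\subset{\cal C}_{n,1^{r_1<\cdots<r_s}}$. Consequently, if $\bar w\in\ker\bar\kappa$, then $\rho(\bar w,\Theta)=\rho^{\prime}(0,\Theta)=0$ for every $\Theta\in{\cal C}^{\prime}_n$, forcing $\bar w=0$. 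Thus $\bar\kappa$ is an isomorphism, and the orbit-basis of its source maps to the asserted basis $\{\phi(\lambda,\vec{\mu})\}_{(\lambda,\vec{\mu})\in{\cal P}_{n,r_1<\cdots<r_s}}$ of $\omega_{n,r_1<\cdots<r_s}({\Bbb C})\otimes_{\Bbb Z}{\Bbb Q}$.

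The main obstacle is the well-definedness of $\rho^{\prime}$: the remaining steps are formal, resting on Theorem~1.1.9, Theorem~1.1.12 and Proposition~1.2.2 together with standard facts about coinvariants of permutation modules over ${\Bbb Q}$, whereas $\rho^{\prime}$ requires re-running the double-point-invariance argument of [Lee-P] with vector-bundle coefficients in place of line-bundle coefficients. If one prefers to avoid that, an equivalent way to finish the injectivity step is to evaluate $\rho^{\prime}(\phi(\lambda,\vec{\mu}),\Theta)$ against an explicit family of test polynomials indexed by ${\cal P}_{n,r_1<\cdots<r_s}$ and to check that, for a suitable ordering of ${\cal P}_{n,r_1<\cdots<r_s}$, the resulting Gram matrix is triangular with nonzero diagonal --- a routine computation on products of projective spaces.
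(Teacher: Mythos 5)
Your argument is correct and follows the route the paper itself intends: deduce the statement from the line-bundle case (Theorems~1.1.9 and 1.1.12, i.e.\ [Lee-P]) through the $\times_{i=1}^s\Sym_{\,r_i-r_{i-1}}$-equivariant epimorphism $\kappa$ of Proposition~1.2.2. The paper presents Theorem~1.2.3 as an ``immediate generalization via $\kappa$'' without writing out a proof, and your fleshing-out --- identifying $\phi(\lambda,\vec{\mu})$ with $\kappa(\phi(\lambda,\vec{m}^{\mbox{\boldmath $\cdot$}}))$, passing to coinvariants over ${\Bbb Q}$ so that ${\cal P}_{n,r_1<\cdots<r_s}$ indexes a spanning set, and descending the Chern-invariant pairing for injectivity --- is a faithful rendering of that sketch, with the one non-formal input (vanishing of the vector-bundle Chern pairing on double point relations) correctly attributed to [Lee-P: Sec.~2.3, 2.6] and recorded in this paper as Theorem~1.2.6.
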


Recall the ideal ${\frak m}\subset\omega_{\ast}({\Bbb C})$.
Let
 $$
  {\cal P}_{\ast,r_1<r_2<\,\cdots\,<r_s}\; :=\;
       \bigcup_{n=0}^{\infty} {\cal P}_{n,r_1<r_2<\,\cdots\,<r_s}\,.
 $$
Then, Theorem~1.2.3 says that, as a ${\Bbb Q}$-vector space,
 $\omega_{\ast,r_1<\,\cdots\,<r_s}({\Bbb C})\otimes_{\Bbb Z}{\Bbb Q}\,$
  has a basis
 $$
  {\cal B}_{\ast,r_1<r_2<\,\cdots\,<r_s}\;
   :=\; \left\{
         \phi(\lambda,\vec{\mu})\;:\;
          (\lambda,\vec{\mu})\,\in\,
                        {\cal P}_{\ast,r_1<r_2<\,\cdots\,<r_s}
        \right\}\,.
 $$
The set of elements in ${\cal B}_{\ast,r_1<r_2<\,\cdots\,<r_s}$
  that do not lie in the $\omega_{\ast}({\Bbb C})$-submodule
     ${\frak m}\cdot\omega_{\ast,r_1<\,\cdots\,<r_s}({\Bbb C})\,$
       of $\,\omega_{\ast,r_1<\,\cdots\,<r_s}({\Bbb C})\,$
 is given by
 $$
  {\cal B}^{\natural}_{\ast,r_1<r_2<\,\cdots\,<r_s}\;
   :=\; \left\{
         \phi(\lambda,\vec{\mu})\;:\;
          (\lambda,\vec{\mu})\,\in\,
                        {\cal P}_{\ast,r_1<r_2<\,\cdots\,<r_s}\,,\,
           l(\lambda)\le r_s\,,\, \mu:=\cup_{i=1}^s\mu_i=\lambda
        \right\}
 $$
Let
 $$
  {\cal P}^{\natural}_{\ast,r_1<\,\cdots\,<r_s}\;:=\;
   \{ (\lambda,\vec{\mu})\,
       \in\, {\cal P}_{\ast,r_1<r_2<\,\cdots\,<r_s}\;:\;
       l(\lambda)\le r_s\,,\, \mu=\lambda\}
 $$
 from the above characterization.
Then, one has the following two fundamental theorems
  on the structure of $\omega_{\ast,r_1<\,\cdots\,<r_s}({\Bbb C})$
  and $\omega_{\ast,r_1<\,\cdots\,<r_s}(X)$.

\begin{theorem}
{\bf [$\omega_{\ast,r_1<\,\cdots\,<r_s}({\Bbb C})$
      as $\omega_{\ast}({\Bbb C})$-module].}
{\rm (Cf.\ [Lee-P: Theorem~2].)}
 $\omega_{\ast,r_1<\,\cdots\,<r_s}({\Bbb C})$
  is a free $\omega_{\ast}({\Bbb C})$-module:
 $$
  \omega_{\ast,r_1<\,\cdots\,<r_s}({\Bbb C})\; =\;
   \bigoplus_{(\lambda,\vec{\mu})
           \in{\cal P}^{\natural}_{\ast,r_1<\,\cdots\,<r_s}}
    \omega_{\ast}({\Bbb C})\,\cdot\, \phi(\lambda,\vec{\mu})\,.
 $$
\end{theorem}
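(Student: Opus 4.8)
The plan is to deduce this theorem from the corresponding statements for lists of lists of \emph{line} bundles, transporting them through the epimorphism $\kappa$ of Proposition~1.2.2 and the identification $\omega_{\ast, r_1<\,\cdots\,<r_s}({\Bbb C})\simeq\omega_{\ast, (r_i-r_{i-1})_{i=1}^s}({\Bbb C})$ of Lemma~1.2.1, supplemented by a dimension count fed by Theorem~1.2.3. The ingredients are: Theorem~1.1.9 (the definition of the classes $\phi(\lambda,\vec{m}^{\bullet})$) and Theorem~1.1.10 (that $\omega_{\ast, 1^{r_1<\,\cdots\,<r_s}}({\Bbb C})$ is the free $\omega_{\ast}({\Bbb C})$-module on $\{\phi(\lambda,\vec{m}^{\bullet}) : (\lambda,\vec{m}^{\bullet})\in{\cal P}^{\natural}_{\ast, 1^{r_1<\,\cdots\,<r_s}}\}$); Lemma~1.2.1 and Proposition~1.2.2; Theorem~1.2.3 (the ${\Bbb Q}$-vector-space structure); and the theorem of Levine--Morel (see [L-M]) that $\omega_{\ast}({\Bbb C})$ is a graded polynomial ring ${\Bbb Z}[x_1, x_2, \ldots]$ with $\deg x_i = i$, whence it is torsion-free and $\dim_{\Bbb Q}(\omega_m({\Bbb C})\otimes_{\Bbb Z}{\Bbb Q}) = p(m)$, the number of partitions of $m$. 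Throughout, write ${\cal P}^{\natural}_{k, r_1<\,\cdots\,<r_s} := {\cal P}^{\natural}_{\ast, r_1<\,\cdots\,<r_s}\cap{\cal P}_{k, r_1<\,\cdots\,<r_s}$.

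\medskip

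\noindent\emph{Step 1 (generation).} Post-composing $\kappa$ with the isomorphism of Lemma~1.2.1 gives an $\omega_{\ast}({\Bbb C})$-module epimorphism $\bar{\kappa} : \omega_{\ast, 1^{r_1<\,\cdots\,<r_s}}({\Bbb C})\twoheadrightarrow\omega_{\ast, r_1<\,\cdots\,<r_s}({\Bbb C})$. Unwinding the definitions of $\kappa$, $\llist$ and $\gr$ (using the right inverse to $\gr$ from the proof of Lemma~1.2.1), one checks that $\bar{\kappa}$ sends the generator $\phi(\lambda,\vec{m}^{\bullet}) = [{\Bbb P}^{\lambda}, (L_{m_1},\,\cdots\,,L_{m_{r_1}};\,\cdots\,)]$ of Theorem~1.1.9 to $[{\Bbb P}^{\lambda}, F_1\subset\,\cdots\,\subset F_s]$ with $F_k := \bigoplus_{i=1}^{r_k}L_{m_i}$; if $\mu_k$ denotes the partition formed by the nonzero entries of the $k$-th block of $\vec{m}^{\bullet}$, then $F_k \cong {\cal O}_{{\Bbb P}^{\lambda}}^{\,r_k - \sum_{j=1}^{k}l(\mu_j)}\oplus\bigoplus_{m\in\mu_1\cup\,\cdots\,\cup\mu_k}L_m$, so this class is exactly $\phi(\lambda,\vec{\mu})$ as defined in Theorem~1.2.3 (the two recipes for the bundles $L_m$ agree up to an automorphism of ${\Bbb P}^{\lambda}$ permuting equal-dimensional factors, hence give the same cobordism class). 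The induced assignment $(\lambda,\vec{m}^{\bullet})\mapsto(\lambda,\vec{\mu})$ maps ${\cal P}^{\natural}_{\ast, 1^{r_1<\,\cdots\,<r_s}}$ \emph{onto} ${\cal P}^{\natural}_{\ast, r_1<\,\cdots\,<r_s}$: given $(\lambda,\vec{\mu})$, a preimage is obtained by putting the parts of $\mu_k$ into the first $l(\mu_k)\le r_k - r_{k-1}$ slots of block $k$ and zeros elsewhere, and the conditions $l(\lambda)\le r_s$ and $\cup_i\mu_i = \lambda$ carry over verbatim. Since $\bar{\kappa}$ is surjective and, by Theorem~1.1.10, its source is generated over $\omega_{\ast}({\Bbb C})$ by those $\phi(\lambda,\vec{m}^{\bullet})$ with $(\lambda,\vec{m}^{\bullet})\in{\cal P}^{\natural}_{\ast, 1^{r_1<\,\cdots\,<r_s}}$, it follows that $\{\phi(\lambda,\vec{\mu}) : (\lambda,\vec{\mu})\in{\cal P}^{\natural}_{\ast, r_1<\,\cdots\,<r_s}\}$ generates $\omega_{\ast, r_1<\,\cdots\,<r_s}({\Bbb C})$ as an $\omega_{\ast}({\Bbb C})$-module.

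\medskip

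\noindent\emph{Step 2 (freeness).} Let $\Phi : \bigoplus_{(\lambda,\vec{\mu})\in{\cal P}^{\natural}_{\ast, r_1<\,\cdots\,<r_s}}\omega_{\ast}({\Bbb C})\to\omega_{\ast, r_1<\,\cdots\,<r_s}({\Bbb C})$ be the graded $\omega_{\ast}({\Bbb C})$-module map sending the $(\lambda,\vec{\mu})$-th free generator, placed in degree $|\lambda|$ (the size of $\lambda$), to $\phi(\lambda,\vec{\mu})$. By Step~1, $\Phi$ is surjective, so it remains to show it is injective. After $-\otimes_{\Bbb Z}{\Bbb Q}$, the degree-$n$ part of the source has ${\Bbb Q}$-dimension $\sum_{k=0}^{n}|{\cal P}^{\natural}_{k, r_1<\,\cdots\,<r_s}|\cdot p(n-k)$, and that of the target has ${\Bbb Q}$-dimension $|{\cal P}_{n, r_1<\,\cdots\,<r_s}|$ by Theorem~1.2.3. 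These coincide: writing $\mu := \cup_i\mu_i\subset\lambda$ and $\lambda\setminus\mu$ for the complementary partition, the map
$$
 {\cal P}_{n, r_1<\,\cdots\,<r_s}\ \longrightarrow\ \bigsqcup_{k=0}^{n}{\cal P}^{\natural}_{k, r_1<\,\cdots\,<r_s}\times\{\text{partitions of }n-k\}, \qquad (\lambda,\vec{\mu})\ \longmapsto\ \bigl((\mu,\vec{\mu}),\,\lambda\setminus\mu\bigr),
$$
is a bijection with inverse $\bigl((\mu,\vec{\mu}),\nu\bigr)\mapsto(\mu\sqcup\nu,\vec{\mu})$; note $l(\mu) = \sum_i l(\mu_i)\le\sum_i(r_i - r_{i-1}) = r_s$, so $(\mu,\vec{\mu})$ automatically lies in ${\cal P}^{\natural}$. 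Hence $\Phi\otimes_{\Bbb Z}{\Bbb Q}$ is a degree-preserving surjection of ${\Bbb Q}$-vector spaces that are finite-dimensional and of equal dimension in each degree, so it is an isomorphism. By exactness of $-\otimes_{\Bbb Z}{\Bbb Q}$, $(\ker\Phi)\otimes_{\Bbb Z}{\Bbb Q} = \ker(\Phi\otimes_{\Bbb Z}{\Bbb Q}) = 0$, so $\ker\Phi$ is a torsion submodule of the torsion-free module $\bigoplus\omega_{\ast}({\Bbb C})$ and therefore vanishes. Thus $\Phi$ is an isomorphism, which is the assertion.

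\medskip

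The step I expect to require the most care is the explicit identification in Step~1: verifying that $\bar{\kappa}$ carries the distinguished generators $\phi(\lambda,\vec{m}^{\bullet})$ of the line-bundle theory \emph{exactly} onto the distinguished generators $\phi(\lambda,\vec{\mu})$ of the filtered theory --- not merely modulo ${\frak m}\cdot\omega_{\ast, r_1<\,\cdots\,<r_s}({\Bbb C})$ --- together with the bookkeeping that the induced map of index sets respects the ``$\natural$'' conditions on both sides. Everything else --- the passage to group completions, the compatibility with double point relations, and the short dimension count --- is then formal, in keeping with this theorem being an immediate consequence of [Lee-P] obtained via $\kappa$.
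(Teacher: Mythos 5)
Your proof is correct and makes explicit what the paper leaves implicit: the paper asserts Theorem~1.2.4 (along with 1.2.3, 1.2.5, and 1.2.6) as an ``immediate generalization via $\kappa$'' of [Lee-P] and Section~1.1, giving no further argument. Your two-step derivation --- generation by surjectivity of $\bar\kappa$ together with the explicit identification $\bar\kappa(\phi(\lambda,\vec{m}^{\bullet})) = \phi(\lambda,\vec{\mu})$, then freeness by a degree-by-degree ${\Bbb Q}$-dimension count using the bijection ${\cal P}_{n,r_1<\cdots<r_s}\cong\coprod_{k}{\cal P}^{\natural}_{k,r_1<\cdots<r_s}\times\{\mbox{partitions of }n-k\}$ together with torsion-freeness of $\omega_{\ast}({\Bbb C})$ --- is exactly the route the authors indicate and parallels the proof of [Lee-P: Theorem~2] on which the claim is modeled.
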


\begin{theorem}
{\bf [$\omega_{\ast,1^{r_1<\,\cdots\,<r_s}}(X)$
      via $\omega_{\ast}(X)\otimes_{\omega_{\ast}(\Bbb C)}
           \omega_{\ast,1^{r_1<\,\cdots\,<r_s}}({\Bbb C})$].}
{\rm (Cf.\ [Lee-P: Theorem~3].)}
 For $X\in\boldSch_{\Bbb C}$,
 the natural map
  $$
   \gamma_X\;:\;
    \omega_{\ast}(X)\otimes_{\omega_{\ast}(\Bbb C)}
     \omega_{\ast,r_1<\,\cdots\,<r_s}({\Bbb C})\;
      \longrightarrow\;
     \omega_{\ast,r_1<\,\cdots\,<r_s}(X)
  $$
  of $\omega_{\ast}({\Bbb C})$-modules defined by
  $$
   \gamma_X
    \left(
       [Y\stackrel{f}{\rightarrow} X] \otimes
       \phi(\lambda,\vec{\mu})
    \right)\;
    =\;[Y\times{\Bbb P}^{\lambda}
          \stackrel{\hspace{1em}f\circ pr_Y}{
          \raisebox{.1ex}{
                \tiny$\vdash$}\!\mbox{---------}\!\!\longrightarrow}
         X \,,\,
        \pr_{{\Bbb P}^{\lambda}}^{\ast}
                               (E_1\subset \,\cdots\,\subset E_s)]
  $$
  is an isomorphism of $\omega_{\ast}({\Bbb C})$-modules.
 Here,
  $(\lambda,\vec{\mu})\in
         {\cal P}^{\natural}_{\ast,r_1<\,\cdots\,<r_s}$;
  $\pr_Y:Y\times{\Bbb P}^{\lambda}\rightarrow Y$,
  $\pr_{{\Bbb P}^{\lambda}}:
     Y\times{\Bbb P}^{\lambda}\rightarrow{\Bbb P}^{\lambda}$
   are the projection maps; and
  $E_1\subset \,\cdots\,\subset E_s$ is the filtered vector bundle
   on ${\Bbb P}^{\lambda}$ associated to $(\lambda,\vec{\mu})$
   constructed in Theorem~1.2.3.
\end{theorem}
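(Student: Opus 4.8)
The plan is to deduce the theorem from its line-bundle-list counterpart, Theorem~1.1.11, by routing through the presentation of $\omega_{\ast,(r_i-r_{i-1})_{i=1}^s}(X)$ as a quotient of $\omega_{\ast,1^{r_1<\,\cdots\,<r_s}}(X)$ given by Proposition~1.2.2. First I would reduce: by Lemma~1.2.1 the source and the target of $\gamma_X$ are canonically $\omega_{\ast}({\Bbb C})$-module isomorphic to $\omega_{\ast}(X)\otimes_{\omega_{\ast}({\Bbb C})}\omega_{\ast,(r_i-r_{i-1})_{i=1}^s}({\Bbb C})$ and $\omega_{\ast,(r_i-r_{i-1})_{i=1}^s}(X)$, with $\gamma_X$ going over to the analogous map for lists of vector bundles, which I again call $\gamma_X$ and which is well defined by the external-product-then-structure-map construction of [Lee-P]. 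I would then observe that the square
$$
 \xymatrix{
  \omega_{\ast}(X)\otimes_{\omega_{\ast}({\Bbb C})}
   \omega_{\ast,1^{r_1<\,\cdots\,<r_s}}({\Bbb C})
   \ar[r]^-{\gamma_X^{1}}\ar[d]_-{\mathrm{id}\otimes\kappa_{\Bbb C}}
   & \omega_{\ast,1^{r_1<\,\cdots\,<r_s}}(X)\ar[d]^-{\kappa_X} \\
  \omega_{\ast}(X)\otimes_{\omega_{\ast}({\Bbb C})}
   \omega_{\ast,(r_i-r_{i-1})_{i=1}^s}({\Bbb C})
   \ar[r]^-{\gamma_X} & \omega_{\ast,(r_i-r_{i-1})_{i=1}^s}(X)
 }
$$
commutes, where $\gamma_X^{1}$ is the isomorphism of Theorem~1.1.11 and $\kappa_X$, $\kappa_{\Bbb C}$ are the $\omega_{\ast}({\Bbb C})$-module epimorphisms of Proposition~1.2.2 for $X$ and for $\Spec{\Bbb C}$; this is a direct check on the generators $[Y\rightarrow X]\otimes\phi(\lambda,\vec{m}^{\mbox{\boldmath $\cdot$}})$, using functoriality of the external product and that $\kappa_{\Bbb C}$ sends $\phi(\lambda,\vec{m}^{\mbox{\boldmath $\cdot$}})$ to the class on ${\Bbb P}^{\lambda}$ of the ``assembled'' direct sums of the $L_{m_i}$.

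Surjectivity of $\gamma_X$ is then formal: $\kappa_{\Bbb C}$, and hence $\mathrm{id}\otimes\kappa_{\Bbb C}$, is onto, so $\Image\gamma_X\supseteq\Image(\gamma_X\circ(\mathrm{id}\otimes\kappa_{\Bbb C}))=\Image(\kappa_X\circ\gamma_X^{1})=\kappa_X(\omega_{\ast,1^{r_1<\,\cdots\,<r_s}}(X))=\omega_{\ast,(r_i-r_{i-1})_{i=1}^s}(X)$. Unwound, a generator $[f:Y\rightarrow X,(V_1,\,\cdots\,,V_s)]$ is moved, via a resolution of singularities $\hat{Y}\rightarrow Y$ that splits all the $V_i$ as in the proof of Proposition~1.2.2, into the image up to a term in ${\frak m}\cdot\omega_{\ast,(r_i-r_{i-1})_{i=1}^s}(X)$, where ${\frak m}\subset\omega_{\ast}({\Bbb C})$ is the ideal of positive-dimensional classes; the split class is $\kappa_X$ of a line-bundle-list class, so lies in $\Image\gamma_X^{1}$ by Theorem~1.1.11; and an induction on $\dimm Y$ absorbs the error term.

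For injectivity I would use freeness. By Theorem~1.2.4 the quotient $\omega_{\ast,(r_i-r_{i-1})_{i=1}^s}({\Bbb C})$ of $\omega_{\ast,1^{r_1<\,\cdots\,<r_s}}({\Bbb C})$ by ${\frak k}_{\Bbb C}:=\ker\kappa_{\Bbb C}$ is a free $\omega_{\ast}({\Bbb C})$-module, so the short exact sequence $0\rightarrow{\frak k}_{\Bbb C}\rightarrow\omega_{\ast,1^{r_1<\,\cdots\,<r_s}}({\Bbb C})\rightarrow\omega_{\ast,(r_i-r_{i-1})_{i=1}^s}({\Bbb C})\rightarrow 0$ splits and stays exact after applying $\omega_{\ast}(X)\otimes_{\omega_{\ast}({\Bbb C})}-$; thus $\ker(\mathrm{id}\otimes\kappa_{\Bbb C})=\omega_{\ast}(X)\otimes_{\omega_{\ast}({\Bbb C})}{\frak k}_{\Bbb C}$, and, $\gamma_X^{1}$ being an isomorphism, commutativity of the square gives $\gamma_X^{1}\bigl(\omega_{\ast}(X)\otimes_{\omega_{\ast}({\Bbb C})}{\frak k}_{\Bbb C}\bigr)\subseteq\ker\kappa_X$, with injectivity of $\gamma_X$ equivalent to the reverse inclusion
$$
 \ker\kappa_X\;\subseteq\;\gamma_X^{1}\bigl(\,
   \omega_{\ast}(X)\otimes_{\omega_{\ast}({\Bbb C})}{\frak k}_{\Bbb C}\,\bigr)\,,
$$
i.e.\ to the assertion that $\ker\kappa_X$ is generated, as an $\omega_{\ast}(X)$-submodule, by the images over $X$ of the relations already holding over $\Spec{\Bbb C}$.

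The inclusion $\ker\kappa_X\subseteq\gamma_X^{1}\bigl(\omega_{\ast}(X)\otimes_{\omega_{\ast}({\Bbb C})}{\frak k}_{\Bbb C}\bigr)$ is the heart of the matter, and here I would adapt [Lee-P: Sec.~4] essentially verbatim: given a list-of-vector-bundles double point relation over $X$, resolve singularities to split every bundle occurring in it -- at the cost of an error in ${\frak m}\cdot\omega_{\ast,(r_i-r_{i-1})_{i=1}^s}(X)$ -- then apply the deformation arguments of Lemma~1.2.1.(b.1) and Lemma~1.2.1.(b.2), performed relative to $X$, to replace each split bundle by the list of its line-bundle summands, and finally compare the resulting line-bundle-list relation with the presentation of $\omega_{\ast,1^{r_1<\,\cdots\,<r_s}}(X)$ from Theorem~1.1.11; an induction on $\dimm Y$ compatible with the ${\frak m}$-adic filtration of $\omega_{\ast}({\Bbb C})$ then shows that nothing outside $\gamma_X^{1}(\omega_{\ast}(X)\otimes{\frak k}_{\Bbb C})$ gets killed. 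The main obstacle will be precisely this last step -- bookkeeping the error terms introduced by the blow-ups that split the bundles and confirming that they contribute nothing new to $\ker\kappa_X$; all the geometric inputs (double point degeneration, Hironaka resolution, the $\Ext$-line deformations of Lemma~1.2.1) are already in hand, so the difficulty is technical rather than conceptual, and the argument runs parallel to [Lee-P: Sec.~4] step for step.
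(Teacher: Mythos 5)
Your proposal is correct and follows essentially the same route the paper intends, which is only asserted rather than spelled out: pass from filtered bundles to lists of bundles via Lemma~1.2.1, exhibit that cobordism group as a quotient of $\omega_{\ast,1^{r_1<\cdots<r_s}}(X)$ via the $\kappa$-maps of Proposition~1.2.2, and transport Theorem~1.1.11 (itself just [Lee-P:\ Theorem~3] in light of Lemma~1.1.8) through the commuting square. Your surjectivity argument is complete, your reduction of injectivity (using freeness from Theorem~1.2.4 to get $\ker(\mathrm{id}\otimes\kappa_{\Bbb C})=\omega_{\ast}(X)\otimes{\frak k}_{\Bbb C}$) to the inclusion $\ker\kappa_X\subseteq\gamma_X^{1}(\omega_{\ast}(X)\otimes{\frak k}_{\Bbb C})$ is correct and clean, and your deferral of that inclusion to an adaptation of the inductive splitting-plus-$\frak m$-filtration argument of [Lee-P:\ Sec.~4] matches the level of detail the paper itself provides -- the paper labels the whole package ``immediate via $\kappa$,'' whereas you rightly flag that this last step is the genuinely nontrivial part.
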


Finally, one has also the following numerical aspect of double point
 cobordism classes for filtered vector bundles on varieties:

\begin{theorem}
{\bf [Chern invariant].} {\rm (Cf.\ [Lee-P: Theorem 4].)}
 Let ${\cal C}_{n,r_1<\,\cdots\,<r_s}$ be the ${\Bbb Q}$-vector space
  of graded degree $n$-polynomials in the Chern classes:
  $$
   \begin{array}{l}
   ( c_1(T_Y), \,\cdots\,,\,c_n(T_Y)\,;\,
     c_1(E_1),\,\cdots\,,c_{r_1}(E_1)\,;\,
     c_1(E_2/E_1),\,\cdots\,,c_{r_2-r_1}(E_2/E_1)\,;\,  \\[1.2ex]
   \hspace{16em}
     \cdots\cdots\cdots\,;
     c_1(E_s/E_{s-1}),\,\cdots\,,c_{r_s-r_{s-1}}(E_s/E_{s-1}) )\,.
   \end{array}
  $$
 The pairing
  $$
   \rho\;:\;
    \omega_{n,r_1<\,\cdots\,<r_s}({\Bbb C})\otimes_{\Bbb Z}{\Bbb Q}\,
     \times\, {\cal C}_{n,r_1<\,\cdots\,<r_s}\;
    \longrightarrow\; {\Bbb Q}
  $$
  defined by
  \begin{eqnarray*}
   \lefteqn{\rho([Y, E_1\subset\,\cdots\,\subset E_s]\,,\,\Theta)}\\[1.2ex]
    && =\;\int_Y\Theta
            ( c_1(T_Y), \,\cdots\,,\,c_n(T_Y)\,;\,
              c_1(E_1),\,\cdots\,,c_{r_1}(E_1)\,;\,
              c_1(E_2/E_1),\,\cdots\,,c_{r_2-r_1}(E_2/E_1)\,;\,   \\[1.2ex]
    && \hspace{16em}
       \cdots\cdots\cdots\,;
       c_1(E_s/E_{s-1}),\,\cdots\,,c_{r_s-r_{s-1}}(E_s/E_{s-1}) )
  \end{eqnarray*}
  is nondegenerate.
 In particular, a class
  $$
   [Y, E_1\subset\,\cdots\,\subset E_s]
   \in\omega_{n,r_1<\,\cdots\,<r_s}({\Bbb C})\otimes_{\Bbb Z}{\Bbb Q}
  $$
  is characterized by its Chern invariants.
\end{theorem}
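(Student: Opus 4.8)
The plan is to transport the Chern-invariant pairing for lists of lists of line bundles (Theorem~1.1.12) across the maps $\gr$, $\llist$, and $\kappa$ of Lemma~1.2.1 and Proposition~1.2.2, and to read off nondegeneracy of $\rho$ from that of its line-bundle incarnation. The first step is to note that the gradation isomorphism $\gr:\omega_{\ast,r_1<\,\cdots\,<r_s}({\Bbb C})\stackrel{\sim}{\rightarrow}\omega_{\ast,\oplus_{i=1}^s(r_i-r_{i-1})}({\Bbb C})$ followed by $\llist$ tautologically intertwines the Chern-invariant pairings on the two sides: the Chern classes $c_j(E_i/E_{i-1})$ appearing in ${\cal C}_{n,r_1<\,\cdots\,<r_s}$ are literally the Chern classes of the graded summands, so ${\cal C}_{n,r_1<\,\cdots\,<r_s}$ is identified with the space of polynomials in $(c(T_Y);c(V_1),\,\cdots\,,c(V_s))$ on the list side, and the two pairings agree under $\gr$ and $\llist$. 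Thus it suffices to prove nondegeneracy of the induced pairing on $\omega_{n,(r_i-r_{i-1})_{i=1}^s}({\Bbb C})\otimes_{\Bbb Z}{\Bbb Q}$.

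Next I would pull back along $\kappa:\omega_{n,1^{r_1<\,\cdots\,<r_s}}({\Bbb C})\rightarrow\omega_{n,(r_i-r_{i-1})_{i=1}^s}({\Bbb C})$ of Proposition~1.2.2. Since $\kappa$ replaces each block $(L_{r_{i-1}+1},\,\cdots\,,L_{r_i})$ by the direct sum $\bigoplus_k L_k$, whose total Chern class is $\prod_k(1+c_1(L_k))$, the substitution $c_j(V_i)\mapsto e_j(c_1(L_{r_{i-1}+1}),\,\cdots\,,c_1(L_{r_i}))$ identifies ${\cal C}_{n,r_1<\,\cdots\,<r_s}$ with the $\times_{i=1}^s\Sym_{\,r_i-r_{i-1}}$-invariant subspace of ${\cal C}_{n,1^{r_1<\,\cdots\,<r_s}}$ (fundamental theorem on symmetric functions, applied block by block and tensored with the polynomial ring in the $c_i(T_Y)$), and one checks straight from the definitions that the pairing of Theorem~1.1.12 evaluated on $\xi$ and the substituted polynomial equals $\rho(\kappa(\xi),\Theta)$. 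On the source, the $\times_{i=1}^s\Sym_{\,r_i-r_{i-1}}$-action permutes the basis of Theorem~1.1.9 according to its natural action on ${\cal P}_{n,1^{r_1<\,\cdots\,<r_s}}$, so $\omega_{n,1^{r_1<\,\cdots\,<r_s}}({\Bbb C})\otimes_{\Bbb Z}{\Bbb Q}\cong{\Bbb Q}[{\cal P}_{n,1^{r_1<\,\cdots\,<r_s}}]$ as a representation, and its orbit set is precisely ${\cal P}_{n,r_1<\,\cdots\,<r_s}$ (an unordered multiset of the allowed length within each block, in place of an ordered tuple). Because $\kappa$ is surjective and, by Theorem~1.2.3, $\dim_{\Bbb Q}(\omega_{n,r_1<\,\cdots\,<r_s}({\Bbb C})\otimes_{\Bbb Z}{\Bbb Q})=|{\cal P}_{n,r_1<\,\cdots\,<r_s}|$ equals the number of these orbits, $\kappa\otimes{\Bbb Q}$ restricts to an isomorphism from the invariant subspace of $\omega_{n,1^{r_1<\,\cdots\,<r_s}}({\Bbb C})\otimes_{\Bbb Z}{\Bbb Q}$ onto $\omega_{n,r_1<\,\cdots\,<r_s}({\Bbb C})\otimes_{\Bbb Z}{\Bbb Q}$.

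Finally, the pairing of Theorem~1.1.12 is invariant under the diagonal $\times_{i=1}^s\Sym_{\,r_i-r_{i-1}}$-action (both arguments are acted on, and the defining integral is unaffected by relabeling line bundles), so the usual averaging argument shows that on each side the invariant subspace pairs trivially against the complementary isotypic components; consequently that pairing, being nondegenerate, restricts to a nondegenerate pairing between the two invariant subspaces. Under the identifications of the previous two paragraphs this restricted pairing is exactly the pairing $\rho$ of the present theorem, which is therefore nondegenerate. The closing assertion, that a class is determined by its Chern invariants, is the special case of left-nondegeneracy.

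The step I expect to be the main obstacle is the verification that $\kappa$ intertwines the two pairings in the precise sense used above --- not because it is deep, but because it rides on the splitting-principle mechanism already invoked in the proof of Proposition~1.2.2: after a resolution $\hat{Y}\rightarrow Y$ splitting every graded piece into line bundles, one needs that the characteristic numbers are unchanged (projection formula) while the cobordism class moves only within ${\frak m}\cdot\omega_{\ast,(r_i-r_{i-1})_{i=1}^s}({\Bbb C})$, so that an induction on $\dimm Y$ reduces the computation of $\rho$ to split representatives. A more hands-on alternative is to argue directly on the ${\Bbb Q}$-basis $\{\phi(\lambda,\vec{\mu})\}$ of Theorem~1.2.3 by evaluating Chern invariants on the explicit models ${\Bbb P}^{\lambda}$ with the filtered bundle $E_1\subset\,\cdots\,\subset E_s$ there: the entries of the resulting pairing matrix are products of integrals of monomials in hyperplane classes, this matrix is the $\times_{i=1}^s\Sym_{\,r_i-r_{i-1}}$-symmetrization of the one already shown to be invertible in [Lee-P: Sec.~2.3, Sec.~2.6], and its invertibility then also encodes the combinatorial identity $\dim_{\Bbb Q}{\cal C}_{n,r_1<\,\cdots\,<r_s}=|{\cal P}_{n,r_1<\,\cdots\,<r_s}|$.
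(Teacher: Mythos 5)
Your proof is correct and follows exactly the route the paper indicates: Theorems 1.2.3--1.2.6 are stated there as ``immediate generalizations of Lee and Pandharipande via $\kappa$'' with no further detail, and your argument---transporting the nondegenerate pairing of Theorem~1.1.12 across $\gr$, $\llist$ and the $\times_{i=1}^s\Sym_{\,r_i-r_{i-1}}$-equivariant surjection $\kappa$, then invoking the averaging/coinvariants argument in characteristic~$0$ together with the dimension count from Theorem~1.2.3---is a clean fleshing-out of that assertion. The one point deserving care, which you flag, is checking that the $\Sym$-orbits on ${\cal P}_{n,1^{r_1<\cdots<r_s}}$ biject with ${\cal P}_{n,r_1<\cdots<r_s}$ and that ${\cal C}_{n,r_1<\cdots<r_s}$ maps isomorphically onto the invariant subspace of ${\cal C}_{n,1^{r_1<\cdots<r_s}}$ (via $c_j(V_i)\mapsto e_j$), both of which are straightforward.
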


\bigskip

\begin{flushleft}
{\bf $\omega_{\ast,\,F^{\bullet}\ast}$ as a functor.}
\end{flushleft}
Part of the discussions in this subsection can be summarized
 in the following categorical picture.

\begin{definition}
{\bf [category ${\Bbb N}^{\,F^{\bullet}}$
      of filtered positive integers].} {\rm
 The category ${\Bbb N}^{\,F^{\bullet}}$
  of {\it filtered positive integers} is defined as follows:
  \begin{itemize}
   \item[$\cdot$]
    {\it Objects.}\hspace{1em}
    Finite (strictly) increasing sequence
     $r_1<\,\cdots\,<r_s$ of positive integers.
    We'll call them {\it filtered positive integers}.\footnote{Here,
                         we think of $r_1<\,\cdots\,<r_s$
                         as a ``filtration" of $r_s$
                         with respect to `$<$'.
                        For convenience,
                         when a filtration $r_1<\,\cdots\,<r_s$
                         of $r_s$ is implicitly specified,
                        we will denote $r_1<\,\cdots\,<r_s$
                         simply by $F^{\bullet}r_s\,$.}
    (The length $s$ of
     $(r_1<\,\cdots\,<r_s)\in {\Bbb N}^{\,F^{\bullet}}$ can vary.)
         
   \item[$\cdot$]
    {\it Morphisms.}\hspace{1em}
    An arrow
     $$
      (r_1<\,\cdots\,<r_s)\;\longrightarrow\;
      (r^{\prime}_1<\,\cdots\,<r^{\prime}_{s^{\prime}})
     $$
      is given
     if and only if
      \begin{itemize}
       \item[$\cdot$]
        $r_{s}\,=\,r^{\prime}_{s^{\prime}}$, and

       \item[$\cdot$]
        $r^{\prime}_1<\,\cdots\,<r^{\prime}_{s^{\prime}}$
        is obtained from $r_1<\,\cdots\,<r_s$
        by removing some (possibly none) of the terms ($\ne r_s$)
         in the filtration.
      \end{itemize}
     Note that the set of morphisms thus defined
      satisfies the composition law.
  \end{itemize}
 Note that ${\Bbb N}\subset {\Bbb N}^{\,F^{\bullet}}$ as a subcategory,
  objects of which will be called {\it unfiltered} positive integers.
 A filtration $r_1<\,\cdots\,<r_s$ is said to be {\it complete}
  if it is of the form $1<2<\,\cdots\,<s-1<s$
  (i.e.\ $r_i=i$ for all $i$).
 The category ${\Bbb N}^{\,F^{\bullet}}$ has
  {\it initial elements} (resp.\ {\it terminal elements})\footnote{Here,
                        an object $x$ in a category ${\cal C}$ is called
                        an {\it initial} (resp.\ {\it terminal}) element 
                       if there exist no morphisms of the form
                        $y\rightarrow x$ (resp.\ $x\rightarrow y$)
                        with $y\ne x$.}
  given exactly by the set of completely filtered positive integers
  (resp.\ unfiltered positive integers).
}\end{definition}

Consider the product category\footnote{Here,
                       the product ${\cal C}_1\times{\cal C}_2$
                        of categories ${\cal C}_1$ and ${\cal C}_2$
                        is defined to be
                       the category
                        whose set of objects is given by
                         $\mbox{\it Obj}\,({\cal C}_1)\times
                          \mbox{\it Obj}\,({\cal C}_2)$.
                       Given
                        $(x_1,y_1)$ and $(x_2,y_2)
                          \in {\cal C}_1\times{\cal C}_2$,
                       the set of morphism 
                        $\mbox{\it Mor}\,((x_1,y_1),(x_2,y_2))$
                        is defined to be
                        $\mbox{\it Mor}\,(x_1,x_2)\times
                         \mbox{\it Mor}\,(y_1,y_2)$.}
 $\boldSch_{{\Bbb C}}\times {\Bbb N}^{\,F^{\bullet}}$ and
let
 $\categoryAb^{\,gr}$
  be the category of graded abelian groups  and
 $\ModCategory_{\omega_{\ast}({\Bbb C})}$ be the category
  of (left) $\omega_{\ast}({\Bbb C})$-modules.

\begin{theorem}
{\bf [$\omega_{\ast,\,F^{\bullet}\ast}$ as a functor].}
 The double point cobordism defines a covariant functor
  $$
   \begin{array}{cccclc}
    \omega_{\ast,\,F^{\bullet}\ast}  & :
     & \boldSch_{{\Bbb C}}\times {\Bbb N}^{\,F^{\bullet}}
     & \longrightarrow  & \hspace{1.6em}
                          \ModCategory_{\omega_{\ast}({\Bbb C})}\;\;
                          \subset\;\;\categoryAb^{\,gr}     \\[1.2ex]
    &&  (X,F^{\bullet}r)   &  \longmapsto
     & \omega_{\ast,\,F^{\bullet}r}(X)   & .
   \end{array}
  $$
 It has the property that,
  when restricted to an $X\in\boldSch_{\Bbb C}$:
  $$
   \begin{array}{cccclc}
    \omega_{\ast,\,F^{\bullet}\ast}(X,\,\cdot\,)  & :
     & {\Bbb N}^{\,F^{\bullet}}
     & \longrightarrow  & \hspace{1.6em}
                          \ModCategory_{\omega_{\ast}({\Bbb C})}\;\;
                          \subset\;\;\categoryAb^{\,gr}     \\[1.2ex]
    &&  F^{\bullet}r   &  \longmapsto
     & \omega_{\ast,\,F^{\bullet}r}(X)   & \,,
   \end{array}
  $$
  $\omega_{\ast,\,F^{\bullet}\ast}(X,\,\cdot\,)$ sends
   an initial element (resp.\ terminal element)
     in ${\Bbb N}^{\,F^{\bullet}}$
    to a double point cobordism group
     $\omega_{\ast,1<2<\,\cdots<r-1<r}(X)\simeq\omega_{\ast, 1^r}(X)$
     (resp.\ $\omega_{\ast,r}(X)$),
      for some $r$,
     of completely filtered vector bundles on varieties
     (resp.\ vector bundles on varieties),  and
   a morphism in ${\Bbb N}^{\,F^{\bullet}}$ to an epimorphism in
    $\ModCategory_{\omega_{\ast}({\Bbb C})}\subset\categoryAb^{\,gr}\,$.
\end{theorem}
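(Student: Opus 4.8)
The statement packages the material of Sections~1.1--1.2 together with [Lee-P], so the plan is to verify it in three steps: that the two kinds of structure map involved are well defined, that they organize into a bifunctor, and that the assertions on initial/terminal objects and on epimorphisms hold. I expect the only step with genuine content to be the last one, which will reduce to Proposition~1.2.2.

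First I would check well-definedness of the structure maps. For a projective morphism $g:X\to X'$, post-composition $[f:Y\to X,\,E_1\subset\cdots\subset E_s]\mapsto[g\circ f:Y\to X',\,E_1\subset\cdots\subset E_s]$ is a homomorphism of monoids ${\cal M}_{\ast,r_1<\cdots<r_s}(X)\to{\cal M}_{\ast,r_1<\cdots<r_s}(X')$; since a double point degeneration $Y\to X\times{\Bbb P}^1$ over $0\in{\Bbb P}^1$ becomes, after composition with $g\times\mathrm{id}_{{\Bbb P}^1}$, a double point degeneration over $X'$, this map carries ${\cal R}_{\ast,r_1<\cdots<r_s}(X)$ into ${\cal R}_{\ast,r_1<\cdots<r_s}(X')$ after group completion and so descends to an $\omega_{\ast}({\Bbb C})$-module homomorphism $g_{\ast}:\omega_{\ast,r_1<\cdots<r_s}(X)\to\omega_{\ast,r_1<\cdots<r_s}(X')$ (this is the functoriality already used in [Lee-P]). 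For a morphism $\phi:(r_1<\cdots<r_s)\to(r'_1<\cdots<r'_{s'})$ in ${\Bbb N}^{\,F^{\bullet}}$ -- so $r_s=r'_{s'}$ and the primed sequence is obtained from the unprimed one by deleting some terms $\ne r_s$ -- I would define $\phi_{\ast}$ by retaining in a filtration only the subbundles whose ranks survive, $[f:Y\to X,\,E_1\subset\cdots\subset E_s]\mapsto[f:Y\to X,\,E_{i_1}\subset\cdots\subset E_{i_{s'}}]$ with $\rank E_{i_j}=r'_j$ (so $E_{i_{s'}}=E_s$); the argument of part~(a) of the proof of Lemma~1.2.1 again shows compatibility with double point relations, so $\phi_{\ast}$ descends to a grading-preserving $\omega_{\ast}({\Bbb C})$-module homomorphism.

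Next, bifunctoriality. A morphism of $\boldSch_{{\Bbb C}}\times{\Bbb N}^{\,F^{\bullet}}$ is a pair $(g,\phi)$ with $g$ projective and $\phi$ as above, and such a pair exists only between objects whose ${\Bbb N}^{\,F^{\bullet}}$-components share the same top rank; I send it to $g_{\ast}\circ\phi_{\ast}=\phi_{\ast}\circ g_{\ast}$, the two composites agreeing because base change and coarsening of the bundle filtration act on formally independent pieces of a class. Identities and composites are preserved for the same reason, so $\omega_{\ast,\,F^{\bullet}\ast}$ is a covariant functor, taking values in $\ModCategory_{\omega_{\ast}({\Bbb C})}\subset\categoryAb^{\,gr}$ since every structure map is $\omega_{\ast}({\Bbb C})$-linear and graded; the restriction to a fixed $X$ is the case $g=\mathrm{id}_X$.

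Finally, fix $X$ and analyze the restricted functor. An initial object of ${\Bbb N}^{\,F^{\bullet}}$ is a complete filtration $1<2<\cdots<r$, where every jump $r_i-r_{i-1}$ equals $1$; Lemma~1.2.1 then gives $\omega_{\ast,1<2<\cdots<r}(X)\simeq\omega_{\ast,(1,\ldots,1)}(X)=\omega_{\ast,1^r}(X)$, the cobordism of $r$-tuples of line bundles, i.e.\ of completely filtered vector bundles. A terminal object is an unfiltered integer $r$, and $\omega_{\ast,r}(X)$ is the [Lee-P] cobordism group of rank-$r$ vector bundles on varieties. For the epimorphism claim, let $\phi:A\to B$ be any morphism of ${\Bbb N}^{\,F^{\bullet}}$, let $I=(1<2<\cdots<r)$ be the complete filtration on the common top rank $r$, and let $\iota:I\to A$ be the unique arrow out of the initial object $I$; then $\phi_{\ast}\circ\iota_{\ast}$ is the homomorphism attached to the unique arrow $I\to B$. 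Composing with the isomorphisms of Lemma~1.2.1 and Lemma~1.1.8, and using Lemma~1.2.1.(b.1) to replace up to cobordism each successive quotient $E_{i_j}/E_{i_{j-1}}$ of the coarsened flag by the direct sum of the line bundles that are its graded pieces, one identifies this homomorphism with the map $\kappa$ of Proposition~1.2.2, which is an epimorphism; hence $\phi_{\ast}$ is an epimorphism. The only step carrying genuine content is this last identification -- equivalently, the surjectivity of the coarsening maps -- which rests on Proposition~1.2.2 and thereby on resolution of singularities; everything else is bookkeeping.
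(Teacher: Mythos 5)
Your proof is correct and fills in the details that the paper elides (the paper presents Theorem 1.2.8 only as a summary of the preceding discussion), using exactly the ingredients the authors intend: Lemma~1.1.8, Lemma~1.2.1 and its sub-lemmas, and Proposition~1.2.2. One cosmetic remark: the final identification is cleanest if, rather than re-running a Lemma~1.2.1.(b.1)-type replacement on the coarsened flag, you simply apply the (already-well-defined) coarsening homomorphism $\phi_{\ast}\circ\iota_{\ast}$ to both sides of the equality $[\alpha]=[\gr^{-1,R}\circ\gr(\alpha)]$ from Lemma~1.2.1.(b.2) for the complete flag; this yields the desired identification with the $\kappa$-composite in one stroke.
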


\bigskip

\section{Algebraic cobordism of and
    basic operations on vector\\ bundles on varieties.}

We address in this section
 the issue of compatibility between
  basic operations on vector bundles
  and algebraic cobordisms of vector bundles on varieties.
This brings out also a question that we pose in the end.

\bigskip

\begin{flushleft}
{\bf (Non)compatibility between basic operations and algebraic cobordisms.}
\end{flushleft}
Recall the following theorem of Lee-Pandharipande:
(Cf.\ Theorem~1.2.6 with $s=1$.)

\begin{stheorem}
{\bf [Chern invariant].} {\rm ([Lee-P: Theorem~4].)}
 The Chern invariants respect algebraic cobordism.
 The resulting map
  $$
    \omega_{n,r}(k)\otimes_{\Bbb Z}{\Bbb Q}\;\longrightarrow\;
    {\cal C}_{n,r}^{\,\ast}
  $$
  is an isomorphism.
\end{stheorem}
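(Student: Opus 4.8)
\quad
The assertion splits into two halves: (i) the Chern-number pairing of Theorem~1.2.6, specialized to $s=1$, is well defined on $\omega_{n,r}(k)\otimes_{\Bbb Z}{\Bbb Q}$ (``Chern invariants respect algebraic cobordism''), and (ii) the induced map $\omega_{n,r}(k)\otimes_{\Bbb Z}{\Bbb Q}\to{\cal C}_{n,r}^{\,\ast}$ is an isomorphism. For (i) the plan is to reduce the decorated double point relation to the undecorated one in $\Omega_{\ast}(\Spec k)$: given a pair $(Y,E)$ with $E$ of rank $r$, pass to the complete flag bundle $p:\mathrm{Fl}(E)\to Y$ (or to an iterated projective bundle), over which $E$ acquires a filtration by subbundles with line-bundle quotients, so that every $c_j(E)$ pulls back to an elementary symmetric function of first Chern classes which are themselves polynomials in $c(T_{\mathrm{Fl}(E)})$ (via $T_{\mathrm{Fl}(E)}=p^{\ast}T_Y\oplus T_{\mathrm{Fl}(E)/Y}$ and Leray--Hirsch). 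Hence $\int_Y\Theta\bigl(c(T_Y),c(E)\bigr)$ rewrites, by the projection formula, as an ordinary Chern number $\int_{\mathrm{Fl}(E)}\widetilde\Theta\bigl(c(T_{\mathrm{Fl}(E)})\bigr)$. A double point degeneration $\pi:{\cal Y}\to{\Bbb P}^1$ carrying $E$ pulls back under $p$ to a double point degeneration of $\mathrm{Fl}({\cal E})$ whose three pieces are exactly $\mathrm{Fl}(E|_A)$, $\mathrm{Fl}(E|_B)$, and $\mathrm{Fl}(E|_{{\Bbb P}(\pi)})$ (one checks that the normal bundles and ${\Bbb P}(\pi)$ behave compatibly under pullback); since ordinary Chern numbers are $\Omega_{\ast}$-invariants and thus annihilate the undecorated double point relation, so does $\rho(\,\cdot\,,\Theta)$. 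This is carried out in [Lee-P: Sec.~2.3, 2.6], which I would invoke directly.

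Granting (i), both sides are finite-dimensional ${\Bbb Q}$-vector spaces: ${\cal C}_{n,r}$ is the degree-$n$ piece of a polynomial ring in finitely many variables, and $\dim_{\Bbb Q}\bigl(\omega_{n,r}(k)\otimes_{\Bbb Z}{\Bbb Q}\bigr)=|{\cal P}_{n,r}|$ by Theorem~1.2.3 with $s=1$. Theorem~1.2.6 (with $s=1$) then asserts that $\rho$ is a nondegenerate pairing $\bigl(\omega_{n,r}(k)\otimes_{\Bbb Z}{\Bbb Q}\bigr)\times{\cal C}_{n,r}\to{\Bbb Q}$; but a nondegenerate pairing of two finite-dimensional vector spaces automatically forces them to have the same dimension and induces isomorphisms $\omega_{n,r}(k)\otimes_{\Bbb Z}{\Bbb Q}\stackrel{\sim}{\to}{\cal C}_{n,r}^{\,\ast}$ and ${\cal C}_{n,r}\stackrel{\sim}{\to}\bigl(\omega_{n,r}(k)\otimes_{\Bbb Z}{\Bbb Q}\bigr)^{\ast}$, the former being the claim. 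As a direct cross-check of the dimensions I would verify $\dim_{\Bbb Q}{\cal C}_{n,r}=|{\cal P}_{n,r}|$ by hand: monomials in $c_1(T_Y),\ldots,c_n(T_Y)$ are indexed by partitions $\nu$ and monomials in $c_1(E),\ldots,c_r(E)$ by partitions $\kappa$ with $\kappa_1\le r$, subject to $|\nu|+|\kappa|=n$, and $(\lambda,\mu)\mapsto(\lambda\setminus\mu,\ \mu^{\prime})$ (take the complementary parts, then conjugate, so $l(\mu)\le r\Leftrightarrow\kappa_1\le r$) is a bijection from ${\cal P}_{n,r}$ onto this set.

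If one prefers a self-contained argument for the nondegeneracy in (ii) rather than quoting Theorem~1.2.6, the plan is the classical triangulation on the explicit generators $\phi(\lambda,\mu)=[{\Bbb P}^{\lambda},\,{\cal O}^{\oplus(r-l(\mu))}\oplus\bigoplus_{m\in\mu}L_m]$ of Theorem~1.2.3. Since $c(T_{{\Bbb P}^{\lambda}})=\prod_i(1+h_i)^{\lambda_i+1}$ and $c(E)=\prod_{m\in\mu}(1+h_{(m)})$, the value $\rho(\phi(\lambda,\mu),\Theta)$ is the coefficient of $\prod_i h_i^{\lambda_i}$ in $\Theta$. Ordering ${\cal P}_{n,r}$ by $(l(\lambda),\lambda)$ refined by $\mu$, I would, for each $(\lambda,\mu)$, build a test polynomial out of Newton power sums $s_k$ in the Chern roots of $T_Y$ together with elementary functions of $c_1(E),\ldots,c_r(E)$, chosen so that it pairs nonzero with $\phi(\lambda,\mu)$ and pairs to zero with every strictly larger generator; the diagonal entries are then products of the nonzero integers $\int_{{\Bbb P}^m}s_m(T_{{\Bbb P}^m})=m+1$ and of binomial coefficients from the expansions of $(1+h_i)^{\lambda_i+1}$, so the pairing matrix is invertible over ${\Bbb Q}$.

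The genuine content is part~(i) together with the nondegeneracy in~(ii): that the flag-bundle construction turns a decorated double point relation into an undecorated one, and that one can manufacture enough characteristic numbers to separate the generators. Given these, the statement is pure linear algebra together with the combinatorial count $\dim_{\Bbb Q}{\cal C}_{n,r}=|{\cal P}_{n,r}|$, and the $s=1$ case adds nothing beyond [Lee-P: Theorem~4] itself.
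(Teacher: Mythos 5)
The paper itself gives no proof of this statement: Theorem~2.1 is a verbatim recall of [Lee-P: Theorem~4] (and, as the text remarks, coincides with Theorem~1.2.6 specialized to $s=1$), cited so that Example~2.3 can use Chern invariants to detect noncobordant bundle pairs. There is therefore no in-paper argument to compare your proposal against; what you have written is a reconstruction of the Lee--Pandharipande proof rather than of anything in this note.

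On the substance of the reconstruction: the flag-bundle reduction in part (i) is in the right spirit, but the line ``$\int_Y\Theta\bigl(c(T_Y),c(E)\bigr)$ rewrites, by the projection formula, as $\int_{\mathrm{Fl}(E)}\widetilde\Theta\bigl(c(T_{\mathrm{Fl}(E)})\bigr)$'' hides a real step. Since $p:\mathrm{Fl}(E)\to Y$ has positive-dimensional fibers, $p_{\ast}1=0$; to apply the projection formula you must first insert a tautological class $\beta$ with $p_{\ast}\beta=1$ and then express both $\beta$ and $p^{\ast}c(T_Y)$ as polynomials in $c(T_{\mathrm{Fl}(E)})$ alone. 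The tangent sequence gives $c(T_{\mathrm{Fl}(E)})=p^{\ast}c(T_Y)\cdot c(T_{\mathrm{Fl}(E)/Y})$, but solving for $p^{\ast}c(T_Y)$ inside the Leray--Hirsch presentation is an additional argument that should be made explicit before the reduction to undecorated Chern numbers is complete. By contrast, your check that the three special-fiber pieces of the pulled-back degeneration are $\mathrm{Fl}(E|_A)$, $\mathrm{Fl}(E|_B)$, and $\mathrm{Fl}(E|_{{\Bbb P}(\pi)})$ is correct: $\mathrm{Fl}(E)\to Y$ is smooth, so the normal bundle of $\mathrm{Fl}(E|_D)$ in $\mathrm{Fl}(E|_A)$ is $p^{\ast}N_{D/A}$ and the ${\Bbb P}(\pi)$-construction commutes with $p$.

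For part (ii), deriving the isomorphism from Theorem~1.2.6 with $s=1$ is circular within this note's logical structure, since Theorem~1.2.6 is itself quoted from [Lee-P: Theorem~4]; you flag this yourself and sketch the triangulation alternative, which is indeed the route of [Lee-P: Sec.~2.3--2.6]. Your dimension count $\dim_{\Bbb Q}{\cal C}_{n,r}=|{\cal P}_{n,r}|$ via the bijection $(\lambda,\mu)\mapsto(\lambda\setminus\mu,\,\mu^{\prime})$ is a clean and correct cross-check.
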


We'll use this to show,
 by an example on vector bundles on Calabi-Yau $3$-folds, that:
 \begin{itemize}
  \item[$\cdot$] {\it
   Among the four basic operations
    -- direct sum, tensor product, dualization, and {\it ${\cal H}$om} --
    on vector bundles on varieties,
   only dualization is compatible with algebraic cobordisms
    of vector bundles on varieties in general.}
 \end{itemize}

\begin{sproposition}
{\bf [compatibility with dualization].}
 The automorphism $\tau$ on the monoid ${\cal M}_{\ast,r}(X)$
  that sends $(f:Y\rightarrow X, E)$ to $(f:Y\rightarrow X, E^{\vee})$
  induces an involution\footnote{I.e.\
                       an automorphism $\tau$ of the group/module
                       such that $\tau\circ\tau$ is the identity map.}
   on the group/$\omega_{\ast}({\Bbb C})$-module
  $\omega_{\ast,r}(X)$.
 In particular,
 if $\sum_i a_i[f_i:Y_i\rightarrow X,E_i]
     =\sum_j b_j[g_j:Z_j\rightarrow X, F_j]$
  in $\omega_{\ast,r}(X)$,
 then
  $\sum_i a_i[f_i:Y_i\rightarrow X,E_i^{\vee}]
   =\sum_j b_j[g_j:Z_j\rightarrow X, F_j^{\vee}]$ in $\omega_{\ast,r}(X)$.
\end{sproposition}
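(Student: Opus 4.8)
The plan is to check that the dualization map $\tau\colon(f\colon Y\to X,E)\mapsto(f\colon Y\to X,E^{\vee})$ respects the double point relations generating ${\cal R}_{\ast,r}(X)$. Since $E\mapsto E^{\vee}$ is a functor on vector bundles that commutes with pullback along any morphism (canonically, and compatibly with restriction to open and closed subschemes), the key structural point is that every piece of data entering a double point relation --- the restrictions $L_{i,A}$, $L_{i,B}$ to the two components $A,B$ of $\pi^{-1}(0)$, and the line bundle on the deformation-to-the-normal-cone bundle ${\Bbb P}(\pi)$ --- is built by pullback/restriction from the ambient family. Concretely, given a double point degeneration $g\colon Y\to X\times{\Bbb P}^1$ decorated by a rank-$r$ bundle $E$ on $Y$, applying $\tau$ to the \emph{entire} decorated family (i.e. replacing $E$ by $E^{\vee}$) is again a double point degeneration decorated by a rank-$r$ bundle, and the four terms of its double point relation are exactly the $\tau$-images of the four terms of the original relation, because $(E|_A)^{\vee}\simeq E^{\vee}|_A$, $(E|_{{\Bbb P}(\pi)})^{\vee}\simeq E^{\vee}|_{{\Bbb P}(\pi)}$, etc. Hence $\tau$ maps generators of ${\cal R}_{\ast,r}(X)$ to generators, so $\tau({\cal R}_{\ast,r}(X))\subset{\cal R}_{\ast,r}(X)$; applying the same to $\tau^{-1}=\tau$ gives equality, and $\tau$ descends to $\omega_{\ast,r}(X)$.

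Next I would observe that $\tau$ on $\omega_{\ast,r}(X)$ is $\omega_{\ast}({\Bbb C})$-linear: the $\omega_{\ast}({\Bbb C})$-action is by $[Z\to\mathrm{pt}]\cdot[f\colon Y\to X,E]=[Z\times Y\to X,\,\mathrm{pr}_Y^{\ast}E]$, and $\mathrm{pr}_Y^{\ast}(E^{\vee})\simeq(\mathrm{pr}_Y^{\ast}E)^{\vee}$, so $\tau$ commutes with the action. Finally $\tau\circ\tau=\mathrm{id}$ already at the level of ${\cal M}_{\ast,r}(X)$, via the canonical double-dual isomorphism $E^{\vee\vee}\simeq E$ of vector bundles (valid precisely because we are dealing with locally free sheaves of finite rank), which is an isomorphism of decorated data and hence the identity on isomorphism classes. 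So $\tau$ is an involution of the group/$\omega_{\ast}({\Bbb C})$-module $\omega_{\ast,r}(X)$. The last sentence of the statement is then immediate: $\tau$ being a well-defined homomorphism, it sends the relation $\sum_i a_i[f_i\colon Y_i\to X,E_i]=\sum_j b_j[g_j\colon Z_j\to X,F_j]$ to $\sum_i a_i[f_i\colon Y_i\to X,E_i^{\vee}]=\sum_j b_j[g_j\colon Z_j\to X,F_j^{\vee}]$.

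The only point requiring genuine care --- the ``main obstacle,'' though a mild one --- is verifying the compatibility $(E|_{{\Bbb P}(\pi)})^{\vee}\simeq E^{\vee}|_{{\Bbb P}(\pi)}$ for the projective-bundle term, i.e. that in the recipe of [Lev-P, Sec.~0.3] the decoration on ${\Bbb P}(\pi)\to D$ attached to $E$ is the pullback of $E$ along a morphism that does not depend on $E$ (it is pulled back from $D=A\cap B$, which carries $E|_D$), so that dualizing $E$ and then forming the relation agrees with forming the relation and then dualizing. Once this is spelled out, everything else is the formal bookkeeping above; no use of the Chern-invariant theorem is needed, although it provides an independent cross-check since $c_i(E^{\vee})=(-1)^i c_i(E)$ shows $\tau$ acts on ${\cal C}_{n,r}^{\,\ast}$ by the evident sign involution, consistent with $\tau\circ\tau=\mathrm{id}$.
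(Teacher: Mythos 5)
Your proposal is correct and takes essentially the same route as the paper: the paper's proof likewise observes that dualization of vector bundles commutes with pullbacks and restrictions, hence $\tau$ preserves the subgroup of double point relations ${\cal R}_{\ast,r}(X)\subset{\cal M}_{\ast,r}(X)^{+}$ and descends to an involution on $\omega_{\ast,r}(X)$. Your write-up merely spells out the $\omega_{\ast}({\Bbb C})$-linearity and the double-dual isomorphism more explicitly than the paper's one-paragraph argument does.
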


\begin{proof}
 $\tau$ extends naturally to an involution on the group
  ${\cal M}_{\ast,r}(X)^+$.
 Since dualization $(\,\cdot\,)^{\vee}$ of vector bundles commutes
  both with pullbacks and with restrictions of vector bundles,
 $\tau$ leaves the subgroup $R_{\ast,r}(X)\subset {\cal M}_{\ast,r}(X)^+$
  generated by all double point relations over $X$ invariant.
 The proposition thus follows.

\end{proof}

\begin{sexample}
{\bf [Noncompatibility with direct sum, tensor product,
      and {\it ${\cal H}$om}].}
{\rm
 Let
  \begin{itemize}
   \item[$\cdot$]
    $X$ be a (smooth) complete intersection Calabi-Yau $3$-fold
    from the zero-locus of a section of the rank-$2$ vector bundle
    ${\cal O}(2,2,0)\oplus {\cal O}(1,1,2)$
    on ${\Bbb P}^2\times{\Bbb P}^2\times {\Bbb P}^1$.
  \end{itemize}
 We will give examples of noncompatibility of algebraic cobordism
  with direct sum, tensor product, and {\it ${\cal H}$om}
  in Parts (d), (e), and (f)
  after the preparation in Parts (a), (b), and (c).

 \bigskip

 \noindent
 {$(a)$} {\bf Lemma [basic identity].} {\it Under the situation,
  $$
   [X\hookrightarrow {\Bbb P}^2\times{\Bbb P}^2\times{\Bbb P}^1,
    {\cal O}(d_1,d_2,d_3)|_X]\;
    =\; [X\hookrightarrow {\Bbb P}^2\times{\Bbb P}^2\times{\Bbb P}^1,
         {\cal O}(d_2,d_1,d_3)|_X]
  $$
   in $\omega_{3,1}({\Bbb P}^2\times{\Bbb P}^2\times{\Bbb P}^1)$
   and, hence, also in $\omega_{3,1}({\Bbb C})$ after pushing forward.
 } 

 \begin{proof}
  By Axiom (Sect) and Axiom (FGL)
   ([L-M: Definition~2.2.1]),\footnote{To avoid a notation jam,
                 here we denote the class
                 $[X^{\prime}\hookrightarrow
                   {\Bbb P}^2\times{\Bbb P}^2\times{\Bbb P}^1,
                   {\cal O}(d_1,d_2,d_3)|_{X^{\prime}}]
                   \in \Omega_{\ast}(
                        {\Bbb P}^2\times{\Bbb P}^2\times{\Bbb P}^1)$
                also by
                 $[X^{\prime}\hookrightarrow
                   {\Bbb P}^2\times{\Bbb P}^2\times{\Bbb P}^1,
                   {\cal O}(d_1,d_2,d_3)]$.
                } 
   \begin{eqnarray*}
    \lefteqn{[X\hookrightarrow
               {\Bbb P}^2\times{\Bbb P}^2\times{\Bbb P}^1]}\\[.6ex]
      & = & [{\Bbb P}^2\times{\Bbb P}^2\times{\Bbb P}^1
              \stackrel{id}{\rightarrow}
              {\Bbb P}^2\times{\Bbb P}^2\times{\Bbb P}^1;
             {\cal O}(2,2,0), {\cal O}(1,1,2)]\\[.6ex]
      & = & \tilde{c}_1({\cal O}(1,1,2))
             \circ\tilde{c}_1({\cal O}(2,2,0))
              ([{\Bbb P}^2\times{\Bbb P}^2\times{\Bbb P}^1
                 \stackrel{id}{\rightarrow}
                 {\Bbb P}^2\times{\Bbb P}^2\times{\Bbb P}^1])\\[.6ex]
      & = & F(\tilde{c}_1({\cal O}(1,1,0)),
                \tilde{c}_1({\cal O}(0,0,2)))\\
         && \hspace{5em}
            \circ\, F(\tilde{c}_1({\cal O}(1,1,0)),
                      \tilde{c}_1({\cal O}(1,1,0)))
            ([{\Bbb P}^2\times{\Bbb P}^2\times{\Bbb P}^1
                 \stackrel{id}{\rightarrow}
                 {\Bbb P}^2\times{\Bbb P}^2\times{\Bbb P}^1])\\[.6ex]
      & = & F(\tilde{c}_1({\cal O}(1,1,0)),
              \tilde{c}_1({\cal O}(0,0,2)))\\
         && \hspace{5em}
            \left(
                F(\tilde{c}_1({\cal O}(1,1)),
                  \tilde{c}_1({\cal O}(1,1)))
            ([{\Bbb P}^2\times{\Bbb P}^2
                 \stackrel{id}{\rightarrow}
                 {\Bbb P}^2\times{\Bbb P}^2])\, \times\,
            [{\Bbb P}^1\stackrel{id}{\rightarrow}{\Bbb P}^1]
            \right)
   \end{eqnarray*}
   in
   $\Omega_3({\Bbb P}^2\times{\Bbb P}^2\times{\Bbb P}^1)
    =\omega_3({\Bbb P}^2\times{\Bbb P}^2\times{\Bbb P}^1)\,$,
   where $F=F_{\Omega_{\ast}({\Bbb C})}$
    is the (commutative) formal group law (of rank $1$)
    with coefficients in
    $\Omega_{\ast}({\Bbb C})=\omega_{\ast}({\Bbb C})$.
  This implies that
   $$
    [X\hookrightarrow {\Bbb P}^2\times{\Bbb P}^2\times{\Bbb P}^1]\;
     \in \; \iota_{\ast}(\Omega_{\ast}(H_{2,2}\times{\Bbb P}^1))
     \subset \;
      \Omega_{\ast}({\Bbb P}^2\times{\Bbb P}^2\times{\Bbb P}^1)\,,
   $$
   where
    $H_{2,2}\hookrightarrow {\Bbb P}^2\times{\Bbb P}^2$
     is a Milnor hypersurface in ${\Bbb P}^2\times{\Bbb P}^2$
     described by the zero-locus of a section of ${\cal O}(1,1)$
     that is transverse to the zero-section
     (cf.\ [L-M: Sec.~2.5.3])
       and
    $\iota: H_{2,2}\times {\Bbb P}^1
      \hookrightarrow {\Bbb P}^2\times{\Bbb P}^2\times{\Bbb P}^1$
     is the built-in inclusion.
  It follows that
   $$
    [X\hookrightarrow {\Bbb P}^2\times{\Bbb P}^2\times{\Bbb P}^1,
     {\cal O}(d_1,d_2,d_3)|_X]\;
     \in \; \iota_{\ast}(\omega_{\ast,1}(H_{2,2}\times{\Bbb P}^1))
     \subset \;
      \omega_{\ast,1}({\Bbb P}^2\times{\Bbb P}^2\times{\Bbb P}^1)
   $$
   as well.
  Since $H_{2,2}\times {\Bbb P}^1$ is invariant under an automorphism
   of ${\Bbb P}^2\times{\Bbb P}^2\times{\Bbb P}^1$
   that interchanges the first two ${\Bbb P}^2$-factors and
    leaves the third ${\Bbb P}^1$-factor fixed,
  $$
   {\cal O}(d_1,d_2,d_3)|_{H_{2,2}\times{\Bbb P}^1}\;
    \simeq\; {\cal O}(d_2,d_1,d_3)|_{H_{2,2}\times{\Bbb P}^1}
  $$
   as line bundles on $H_{2,2}\times{\Bbb P}^1$,
   for all $(d_1,d_2,d_3)\in {\Bbb Z}^{\oplus 3}$.
  Furthermore,
  cobordisms among classes
   $[f_i:X_i\rightarrow {\Bbb P}^2\times{\Bbb P}^2\times{\Bbb P}^1]$'s
    in $\omega_{\ast}({\Bbb P}^2\times{\Bbb P}^2\times{\Bbb P}^1)$
   induce canonically cobordisms among classes
    $[f_i:X_i\rightarrow {\Bbb P}^2\times{\Bbb P}^2\times{\Bbb P}^1,
      L|_{X_i}]$'s
    in $\omega_{\ast,1}({\Bbb P}^2\times{\Bbb P}^2\times{\Bbb P}^1)$
    and, similarly,
  cobordisms among classes
   $[f_i:X_i\rightarrow H_{2,2}\times{\Bbb P}^1]$'s
    in $\omega_{\ast}(H_{2,2}\times{\Bbb P}^1)$
   induce canonically cobordisms among classes
    $[f_i:X_i\rightarrow H_{2,2}\times{\Bbb P}^1,L|_{X_i}]$'s
    in $\omega_{\ast,1}(H_{2,2}\times{\Bbb P}^1)$,
   where $L$ is a fixed line bundle on
    ${\Bbb P}^2\times{\Bbb P}^2\times{\Bbb P}^1$ in both situations.
  The lemma follows.

 \end{proof}


 \noindent $(b)$ {\it Chern invariants of line bundles.}\hspace{1em}
  Let $E={\cal O}(d_1,d_2,d_2)|_X$.
  Then the Chern invariants of $[X,E]\in\omega_{3,1}({\Bbb C})$
   can be computed straightforwardly via the intersection product
   in the Chow ring
   $A^{\ast}({\Bbb P}^2\times{\Bbb P}^2\times{\Bbb P}^1)
    \simeq{\Bbb Z}[H_1,H_2,H_3]/(H_1^3,H_2^3,H_3^2)$
   -- with\footnote{Here, we represent a cycle on $X$ by its image
                     on ${\Bbb P}^2\times{\Bbb P}^2\times{\Bbb P}^1$
                     via the built-in inclusion
                     $X\hookrightarrow
                      {\Bbb P}^2\times{\Bbb P}^2\times{\Bbb P}^1$.
                    Similarly for Part (c).}
    $[X]=(2H_1+2H_2)\cdot(H_1+H_2+2H_3)$,
    $c(X)=[X]
          (1+H_1)^3(1+H_2)^3(1+H_3)^2/((1+2H_1+2H_2)(1+H_1+H_2+2H_3))$
     via the adjunction formula,
    $c(E)=[X](1+d_1H_1+d_2H_2+d_3H_3)$, and
    the intersection product
     $H_1\cdot H_1\cdot H_2\cdot H_2\cdot H_3=1$ --
  which gives:\footnote{Part
                        of the computations in Part (b) and Part (c)
                        are aided by {\sl Mathematica} and {\sl Maple}.}
  $$
   \int_X c_3(T_X)\;=\; -96\,,
  $$
  $$
   \int_X c_2(T_X)c_1(E)\;=\; 36\,(d_1+d_2)\,+\,24\,d_3\,,
  $$
  $$
   \int_X c_1(E)^3\;
   =\; 12\,d_1d_2(d_1+d_2)\,+\, 6\,((d_1+d_2)^2+2d_1d_2)d_3\,,
  $$
  $$
   \int_X c_2(T_X)c_1(T_X)\;
    =\; \int_X c_1(T_X)^3\;
    =\; \int_X c_1(T_X)^2c_1(E)\;
    =\; \int_X c_1(T_X)c_1(E)^2\;
    =\; 0\,.
  $$
 Note that these invariants are all functions of $(d_1+d_2, d_1d_2)$,
  which is consistent with Part (a) Lemma under Theorem~2.1.

 \bigskip

 \noindent $(c)$
 {\it Chern invariants of rank-$2$ vector bundles.}\hspace{1em}
  Let $E={\cal O}(a_1,a_2,a_3)|_X\oplus{\cal O}(b_1,b_2,b_3)|_X$.
  Then the Chern invariants of $[X,E]\in\omega_{3,2}({\Bbb C})$
   can be computed similarly as in Part (b)
   -- with now
   $c(E)=[X](1+a_1H_1+a_2H_2+a_3H_3)(1+b_1H_1+b_2H_2+b_3H_3)$ --
  which gives:
  $$
   \int_X c_3(T_X)\;=\; -96\,,
  $$
  \begin{eqnarray*}
   \int_X c_2(T_X)c_1(E)\;
    =\; 36\,(a_1+b_1+a_2+b_2)\,+\, 24\,(a_3+b_3)\,,
  \end{eqnarray*}
  \begin{eqnarray*}
   \lefteqn{\int_X c_2(E)c_1(E)} \\
    & = &  2\,(a_2b_2(a_3+b_3)+(a_2+b_2)(a_2b_3+a_3b_2))\,
       +\, 2\,((a_1+b_1)(a_1b_3+a_3b_1)+ a_1b_1(a_3+b_3))         \\
    && +\, 4\,((a_1+b_1)(a_2b_3+a_3b_2)+(a_1b_2+a_2b_1)(a_3+b_3)
                                       +(a_1b_3+a_3b_1)(a_2+b_2)) \\
    && +\, 4\,((a_1+b_1)a_2b_2 + (a_1b_2+a_2b_1)(a_2+b_2))\,
       +\, 4\,((a_1+b_1)(a_1b_2+a_2b_1)+a_1b_1(a_2+b_2))\,,
  \end{eqnarray*}
  \begin{eqnarray*}
   \lefteqn{\int_X c_1(E)^3\;
    =\;   12\,(a_1+b_1)^2(a_2+b_2)\,+\,12\,(a_1+b_1)(a_2+b_2)^2\,
          +\, 6\,(a_1+b_1)^2(a_3+b_3)} \\
    && \hspace{4.8em}
       +\, 6\,(a_2+b_2)^2(a_3+b_3)\,+\,24\,(a_1+b_1)(a_2+b_2)(a_3+b_3)\,,
       \hspace{7em}
  \end{eqnarray*}
  \begin{eqnarray*}
   \lefteqn{\int_X c_2(T_X)c_1(T_X)\;
         =\; \int_X c_1(T_X)^3}\\
    && =\; \int_X c_1(T_X)^2c_1(E)\;
       =\; \int_X c_1(T_X)c_2(E)\;
       =\; \int_X c_1(T_X)c_1(E)^2\;
       =\; 0\,.
  \end{eqnarray*}

 \bigskip

 \noindent
 $(d)$ {\it Noncompatibility with direct sum.}\hspace{1em}
 Let
  $$
   L_1\; =\; {\cal O}(d_1,d_2,d_3)|_X\,,\;\;
   L_2\; =\; {\cal O}(d_2,d_1,d_3)|_X\,,\;\; \mbox{and}\;\;
   L_3\; =\; {\cal O}(e_1,e_2,e_3)|_X\,.
  $$
 Then,
  $[X,L_1]=[X,L_2]$ in $\omega_{3,1}({\Bbb C})$ from Part (a) Lemma.
 However, let
  $$
   E_1\; :=\;  L_1\oplus L_3\; \hspace{1em}\mbox{and}\hspace{1em}
   E_2\; :=\;  L_2\oplus L_3\,.
  $$
 Then, it follows from Part (c) that, for example,
  \begin{eqnarray*}
   \lefteqn{\int_X c_2(E_1)c_1(E_1)\,-\,\int_X c_2(E_2)c_1(E_2)}\\
   & = &  -\,4\,(d_1-d_2)(e_1-e_2)\, ((d_1+d_2+d_3)+(e_1+e_2+e_3))
  \end{eqnarray*}
  after simplification.
 This shows that:
  \begin{itemize}
   \item[$\cdot$] {\it
    While $[X,L_1]=[X,L_2]$ (and $[X,L_3]=[X,L_3]$)
     in $\omega_{3,1}({\Bbb C})$,
    $$
     [X,L_1\oplus L_3]\;\ne\; [X,L_2\oplus L_3]
    $$
     in $\omega_{3,2}({\Bbb C})$, for general choices of
     $((d_1,d_2,d_3), (e_1,e_2,e_3))\in {\Bbb Z}^{\oplus 6}$.}
  \end{itemize}

 \bigskip

 \noindent
 $(e)$ {\it Noncompatibility with tensor product.}\hspace{1em}
 $L_1$, $L_2$, and $L_3$ as in Part (d).
 Let
  $$
   \begin{array}{lcl}
    E_1  & :=
         & L_1\otimes L_3\; =\; {\cal O}(d_1+e_1,d_2+e_2,d_3+e_3)|_X\,,
                                                           \\[1.2ex]
    E_2  & :=
         & L_2\otimes L_3\; =\; {\cal O}(d_2+e_1,d_1+e_2,d_3+e_3)|_X\,.
   \end{array}
  $$
 Then, it follows from Part (b) that, for example,
  $$
   \int_X c_1(E_1)^3\,-\,\int_X c_1(E_2)^3\;
   =\; -12\,(d_1-d_2)(e_1-e_2)(d_3+e_3)
  $$
  after simplification.
 This shows that:
  \begin{itemize}
   \item[$\cdot$] {\it
    While $[X,L_1]=[X,L_2]$ (and $[X,L_3]=[X,L_3]$)
     in $\omega_{3,1}({\Bbb C})$,
    $$
     [X,L_1\otimes L_3]\;\ne\; [X,L_2\otimes L_3]
    $$
     in $\omega_{3,1}({\Bbb C})$, for general choices of
     $((d_1,d_2,d_3), (e_1,e_2,e_3))\in {\Bbb Z}^{\oplus 6}$.}
  \end{itemize}

 \bigskip

 \noindent
 $(f)$ {\it Noncompatibility with {\it ${\cal H}$om}.}\hspace{1em}
 $L_1$, $L_2$, and $L_3$ as in Part (d).
 Let
  $$
   \begin{array}{lcl}
    E_1 & :=
        & L_1^{\vee}\otimes L_3\;
          =\; {\cal O}(-d_1+e_1,-d_2+e_2,-d_3+e_3)|_X\,, \\[1.2ex]
    E_2 & :=
        & L_2^{\vee}\otimes L_3\;
          =\; {\cal O}(-d_2+e_1,-d_1+e_2,d_3+e_3)|_X\,.
   \end{array}
  $$
 Then, it follows from Part (e) that
  $$
   \int_X c_1(E_1)^3\,-\,\int_X c_1(E_2)^3\;
   =\; 12\,(d_1-d_2)(e_1-e_2)(d_3+e_3)\,.
  $$
 Similarly, let
  $$
   \begin{array}{lcl}
    E_1^{\prime} & :=
     & L_3^{\vee}\otimes L_1\;
       =\; {\cal O}(d_1-e_1,d_2-e_2,d_3-e_3)|_X\,,  \\[1.2ex]                           
    E_2^{\prime} & :=
     & L_3^{\vee}\otimes L_2\;
       =\; {\cal O}(d_2-e_1,d_1-e_2,d_3-e_3)|_X\,.
   \end{array}
  $$
 Then, it follows from Part (e) that
  $$
   \int_X c_1(E_1^{\prime})^3\,-\,\int_X c_1(E_2^{\prime})^3\;
   =\; 12\,(d_1-d_2)(e_1-e_2)(d_3-e_3)\,.
  $$
 This shows that:
  \begin{itemize}
   \item[$\cdot$] {\it
    While $[X,L_1]=[X,L_2]$ (and $[X,L_3]=[X,L_3]$)
     in $\omega_{3,1}({\Bbb C})$,
    $$
     [X,\Homsheaf_X(L_1,L_3)]\;\ne\; [X,\Homsheaf_X(L_2,L_3)]
    $$
     and
    $$
     [X,\Homsheaf_X(L_3,L_1)]\;\ne\; [X,\Homsheaf_X(L_3,L_2)]
    $$
     in $\omega_{3,1}({\Bbb C})$, for general choices of
     $((d_1,d_2,d_3), (e_1,e_2,e_3))\in {\Bbb Z}^{\oplus 6}$.}
  \end{itemize}

 \noindent
 This concludes the example.
}\end{sexample}

\bigskip

\begin{flushleft}
{\bf Question:
 Refined and higher algebraic cobordisms of vector bundles on varieties?}
\end{flushleft}
In the construction of Example~2.3,
a reason why basic operations on vector bundles in general
 are not compatible with algebraic cobordisms
 is that
 \begin{itemize}
  \item[$\cdot$] {\it
   Isomorphisms of bundles on varieties
    in general do not fit into a cobordism family of isomorphisms
    of bundles on varieties that contains an identity map.}
  \end{itemize}
Comparing this phenomenon with the construction of $L$-group
 and its higher extension $L^{\bullet}$
  via the simplicial setting of the related cobordism theory
  for $L$-group (cf.\ [Lu]),
it is natural to pose the following question:

\begin{squestion}
 {\bf [refined and higher algebraic cobordism].}
 Is there a refined construction of algebraic cobordism of bundles
  on varieties that make it compatible with basic operations on bundles?
 Is there a simplicial construction that gives rise to
  higher algebraic cobordisms $\Omega_{\ast}^{\,\bullet}$
   (resp.\ $\omega_{\ast}^{\,\bullet}$,
           $\omega_{\ast,\ast}^{\,\bullet}$)
   of varieties or
   of vector bundles on varieties
  that homotopy-theoretically extends $\Omega_{\ast}$
    of Levine and Morel {\rm [L-M]}
    (resp.\
     $\omega_{\ast}$ of Levine and Pandharipande {\rm [Lev-P]},
     $\omega_{\ast,\ast}$ of Lee and Pandharipande  {\rm [Lee-P]})?
\end{squestion}

\bigskip
\bigskip

Finally,
we remark that the note
 is initiated from a discussion in [Tz2] and
 motivated by
 issues in enumerative geometry of BPS bound states of D-branes
  discussed in numerous string-theory literatures
  -- for example,
     [D-M] (2007) of Frederik Denef and Gregory Moore --
  and
 the notion of configurations in abelian categories
  in [Jo] (2003--2005) of Dominic Joyce.
This prepares us in part for related themes in a D-brane project
 (cf.\ [L-Y] (D(6)) for a partial review).

\newpage
\baselineskip 13pt
{\footnotesize

}

\end{document}